\documentclass{article}
\usepackage{a4wide}
\usepackage{amsmath,amsfonts,amssymb,mathrsfs,amsthm}
\usepackage{float}
\usepackage{srcltx}
\usepackage{slashed}

\usepackage{authblk}

\title{Asymptotic behavior of small solutions to the Vlasov--Klein--Gordon system in high dimensions}

\author{Ho Lee\footnote{holee@khu.ac.kr}}

\affil{Department of Mathematics, College of Sciences, Kyung Hee University, Seoul 02447, Republic of Korea}

\newtheorem{theorem}{Theorem}[section]
\newtheorem{lemma}{Lemma}[section]

\newtheorem{proposition}{Proposition}[section]
\newtheorem{remark}{Remark}[section]

\newcommand{\bbr}{{\mathbb R}}
\newcommand{\bbs}{{\mathbb S}}

\newcommand{\bbn}{{\mathbb N}}

\newcommand{\bbp}{{\mathbb P}}

\newcommand{\bbrn}{{\bbr^n}}

\newcommand{\rV}{{\rm V}}
\newcommand{\rK}{{\rm KG}}

\newcommand{\cP}{{\mathcal P}}
\newcommand{\cQ}{{\mathcal Q}}

\def\charf {\mbox{{\text 1}\kern-.24em {\text l}}}

\begin{document}

\maketitle

\begin{abstract}
We study the asymptotic behavior of small solutions to the Vlasov--Klein--Gordon system in high dimensions. The standard argument of Glassey and Strauss \cite{GS87} for studying small solutions to the Vlasov--Maxwell system does not apply to the Vlasov--Klein--Gordon system due to the massiveness of the Klein--Gordon field. In this paper we use the vector field method and consider solutions in dimensions $ n \geq 4 $ with the hyperboloidal foliation of the Minkowski spacetime to obtain the asymptotic properties for the Vlasov--Klein--Gordon system.
\end{abstract}


%
\section{Introduction}
Let us consider an ensemble of relativistic particles interacting through a self-consistent field. In many situations this field is given by the Maxwell field, but in this paper we instead consider the Klein--Gordon field. Assuming that the particles are collisionless, we obtain the Vlasov--Klein--Gordon (VKG) system \cite{KRST03, KRST05}:
\begin{align*}
\partial_t f + \hat{ v } \cdot \nabla_x f - \nabla_ x \phi \cdot \nabla_v f & = 0 , \\
- \partial_t^2 \phi + \Delta_x \phi - \phi & = \int_{ \bbr^n } f \, dv .
\end{align*}
Here, $ f = f ( t , x , v ) $ and $ \phi = \phi ( t , x ) $ with $ t \in \bbr $, $ x \in \bbr^n $ and $ v \in \bbr^n $ are the distribution function and the Klein--Gordon (scalar) field, respectively, and $ \hat{ v } = v / v^0 $ denotes the relativistic velocity of a particle with momentum $ v $. We consider particles of unit mass so that $ v^0 $ is given by
\begin{align*}
v^0 = \sqrt{ 1 + | v |^2 } .
\end{align*}
In this paper we are interested in small solutions to the VKG system and study the asymptotic behavior of them by using the vector field method.

The VKG system was first studied in \cite{KRST03, KRST05} to obtain mathematical insight into the global existence problem for the Vlasov--Maxwell (VM) system, which still remains unsolved for large initial data. By replacing the massless Maxwell field with a massive Klein--Gordon field, one obtains a faster temporal decay rate, suggesting a more favorable scenario for global existence. However, this enhanced decay rate does not necessarily simplify the problem. Indeed, for large initial data, the global existence for the VKG system was obtained in \cite{KRST05}, but it was under the same continuation criterion as in the VM case \cite{GS86}.

For small initial data, the massiveness of the Klein--Gordon field even complicates the problem. While the Maxwell field decays at a rate of $ t^{ - 1 } $, its masslessness allows for an additional decay rate of $ ( t - r )^{ - 1 } $ inside the light cone. Then, by assuming compact support, this extra decay enables the application of the standard argument of Bardos and Degond \cite{BD85}, which leads to the global existence of small solutions to the VM system \cite{GS87}. In contrast, the Klein--Gordon field decays at a faster rate of $ t^{ - 3 / 2 } $, but this rate alone is insufficient to apply the argument of \cite{BD85}. One needs to derive an additional decay rate, but it is not possible to apply the idea of \cite{GS87} to the VKG case, due to the massiveness of the Klein--Gordon field. It violates the strong Huygens' principle, and decay estimates based on the $ ( t - r )^{ - 1 } $ factor are irrelevant. To overcome this difficulty, Ha and Lee \cite{HL09} studied the VKG system by incorporating an additional damping term, which enabled the necessary decay estimates. In the present paper, we study the asymptotic behavior of small solutions to the undamped VKG system and resolve the aforementioned issues by employing the vector field method. We refer to \cite{Gl, GSc97, GS86, GS87, LS14, LS16, P05, P15, S04} for more details about the VM system and \cite{N24a, XZ18, XZ19} for recent results on the VKG system.

The vector field method, introduced by Klainerman in the 1980s, has been a powerful technique for studying wave equations and has recently been extended to the study of kinetic equations. In \cite{S16}, the vector field method was successfully adapted to the study of the Vlasov--Poisson system. The main idea was to lift the commutation vector fields to the tangent bundle in order to apply them in the phase space. The method was then extended to the Vlasov--Nordstr{\"o}m system \cite{FJS17, FJS17a}, to the VM system \cite{B20, B21, B22}, to the Vlasov--Yukawa system \cite{D22}, to the Einstein--Vlasov system \cite{BFJST21, FJS21}, and now will be applied to the VKG system in this paper. The spatial dimensions will be restricted to $ n \geq 4 $. In three dimensions we will need to consider the method of modified vector fields as in \cite{B20, FJS17a, S16} (see also \cite{N24a} for a different approach), and this will be our future project.

\subsection{Main result} \label{sec main}
In this paper we study the VKG system in dimensions $ n \geq 4 $. Let us write the VKG system as
\begin{align}
T_\phi f & = 0 , \label{V} \\
( \Box - 1 ) \phi & = \int_\bbrn f \, d v . \label{KG}
\end{align}
We are interested in small solutions to the VKG system \eqref{V}--\eqref{KG} with initial data given at the unit hyperboloid $ H_1 $. In the following theorem, and throughout the paper, the terminologies {\it regular} functions, {\it regular} solutions or {\it regular} distributions will be understood in the same way as in \cite{FJS17}. Details about the hyperboloidal foliation and the energies $ E_N ( \phi ) $, $ \hat{ E }_N ( f ) $ and $ \hat{ E }_{ N , 1 } ( f ) $ in the following theorem will be provided in Section \ref{sec prelim}.

\begin{theorem} \label{thm}
Let $ n \geq 4 $ and $ N \geq 5 n + 2 $. Let $ ( \phi , f ) $ be a regular solution of the VKG system \eqref{V}--\eqref{KG} with initial data given at the unit hyperboloid $ H_1 $. Then, there exists $ \varepsilon > 0 $ such that if initial data satisfy
\begin{align*}
E_N ( \phi ) ( 1 ) + \hat{ E }_{ N, 1 } ( f ) ( 1 ) + \hat{ E }_{ N + n } ( f ) ( 1 ) < \varepsilon ,
\end{align*}
then the corresponding solution satisfies the following estimates:
\begin{itemize}
\item[1.] The Klein--Gordon field satisfies for all $ \tau \geq 1 $,
\begin{align}
E_N ( \phi ) ( \tau ) \lesssim \varepsilon . \label{thm1}
\end{align}
\item[2.] The distribution function satisfies for all $ \tau \geq 1 $,
\begin{align}
\hat{ E }_{ N , 1 } ( f ) ( \tau ) \lesssim \varepsilon \tau^{ \delta } , \label{thm2}
\end{align}
where $ \delta = 0 $ for $ n \geq 5 $, and $ \delta = \varepsilon^{ \frac14 } $ for $ n = 4 $.
\item[3.] For any multi-index $ A $ satisfying $ N - n + 1 \leq | A | \leq N $, we have for all $ \tau \geq 1 $,
\begin{align}
\int_{ H_\tau } \frac{ t }{ \tau } \left( \int_{ \bbr^n } | \hat{ Z }_A f | \, d v \right)^2 d \mu_{ H_\tau } \lesssim \varepsilon^2 \tau^{ 2 \delta - n } , \label{thm3}
\end{align}
where $ \delta = 0 $ for $ n \geq 5 $, and $ \delta = \varepsilon^{ \frac14 } $ for $ n = 4 $.
\end{itemize}
Moreover, for each $ ( t , x ) $ in the future of the unit hyperboloid, we have the following estimates:
\begin{itemize}
\item[4.] For any multi-index $ A $ satisfying $ | A | \leq N - n $, we have
\begin{align}
& \int_{ \bbr^n } | \hat{ Z }_A f | \, d v \lesssim \frac{ \varepsilon }{ t^{ n - \frac{ 1 + \delta }{ 2 } } ( t - | x | )^{ \frac{ 1 - \delta }{ 2 } } } , \label{thm41} \\
& \int_{ \bbr^n } \frac{ 1 }{ v^0 } | \hat{ Z }_A f | \, d v \lesssim \frac{ \varepsilon }{ t^{ n - \delta } } , \label{thm42}
\end{align}
where $ \delta = 0 $ for $ n \geq 5 $, and $ \delta = \varepsilon^{ \frac14 } $ for $ n = 4 $.
\item[5.] For any multi-index $ A $ satisfying $ 0 \leq | A | \leq \lfloor N / 2 \rfloor - n $, we have
\begin{align}
\int_{ \bbr^n } | \hat{ Z }_A f | \, d v \lesssim \frac{ \varepsilon }{ t^{ n - \delta } } , \label{thm5}
\end{align}
where $ \delta = 0 $ for $ n \geq 5 $, and $ \delta = \varepsilon^{ \frac14 } $ for $ n = 4 $.
\item[6.] For any multi-index $ A $ satisfying $ 0 \leq | A | \leq N - \lfloor n / 2 \rfloor - 1 $, we have
\begin{align}
| Z_A \phi | & \lesssim \frac{ \varepsilon^{ \frac12 } }{ t^{ \frac{ n }{ 2 } } } , \label{thm61} \\
| \partial Z_A \phi | & \lesssim \frac{ \varepsilon^{ \frac12 } }{ t^{ \frac{ n - 1 }{ 2 } } ( t - | x | )^{ \frac12 } } , \label{thm62}
\end{align}
where $ \delta = 0 $ for $ n \geq 5 $, and $ \delta = \varepsilon^{ \frac14 } $ for $ n = 4 $.
\end{itemize}
\end{theorem}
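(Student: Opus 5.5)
We argue by continuity (bootstrap) on the hyperboloidal foliation. Let $T^*$ be the largest $\tau\geq 1$ such that on $[1,T^*)$ we have the bootstrap assumptions
\begin{align*}
E_N(\phi)(\tau) \leq 2C_0\varepsilon, \qquad \hat{E}_{N,1}(f)(\tau)\leq 2C_0\varepsilon\tau^{\delta}, \qquad \hat{E}_{N+n}(f)(\tau) \leq 2C_0 \varepsilon \tau^{\delta'},
\end{align*}
where $\delta'$ is a small loss to be fixed. We then show that these bounds hold with $2C_0$ replaced by $C_0$, so that $T^*=\infty$. Items 4--6 are derived from the energy bounds as consequences, and we also use them inside the argument.

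\textbf{Pointwise decay.} First, a Klainerman--Sobolev inequality on $H_\tau$ for the Klein--Gordon field, which gains a factor $t^{-n/2}$ from the mass term and $t^{-(n-1)/2}(t-|x|)^{-1/2}$ for derivatives, combined with the bound on $E_N(\phi)$, gives \eqref{thm61}--\eqref{thm62} for $|A|\leq N-\lfloor n/2\rfloor-1$. Second, the hyperboloidal Klainerman--Sobolev inequality for velocity averages from \cite{FJS17} is applied to $\hat{E}_{N}(f)$ and $\hat{E}_{N,1}(f)$. The weighted energy $\hat E_{N,1}$ controls $\int (v^0)^{-1}|\hat Z_A f|$, which yields \eqref{thm42}. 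Interpolating the two bounds gives \eqref{thm41}, and using the higher-order bound $\hat E_{N+n}$ at low order gives \eqref{thm5}, at the price of half the derivatives.

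\textbf{Energy estimates.} For $f$, we commute $T_\phi$ with the lifted fields $\hat Z_A$. The commutator $[T_\phi,\hat Z_A]f$ is a sum of terms $\partial Z_B\phi\cdot\nabla_v \hat Z_C f$ with $|B|+|C|\leq|A|$. We rewrite $\nabla_v$ in terms of $\hat Z$ with weights $t/v^0$ and $(t-|x|)$, and this is where the $(t-|x|)^{-1/2}$ gain in \eqref{thm62} is needed. When $|B|$ is small we use the pointwise bound on $\phi$, which gives an integrable factor $\tau^{-(n-1)/2}$ times the energy of $f$. When $|B|$ is large, $|C|$ is small, so we use the pointwise bound \eqref{thm5} on $f$ together with the $L^2$ energy of $\phi$. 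For $n\geq5$ the decay is integrable and $\delta=0$. For $n=4$ it is borderline, giving a rate $\varepsilon^{1/2}\tau^{-1}$, and Gronwall produces the growth $\tau^{C\varepsilon^{1/2}}\leq\tau^{\varepsilon^{1/4}}$. For $\phi$, we use the standard hyperboloidal energy identity for $(\Box-1)Z_A\phi=\int \hat Z_A f\,dv+\dots$. The result is $E_N(\phi)(\tau)\lesssim \varepsilon+\int_1^\tau \|\int|\hat Z_Af|dv\|_{L^2(H_s)}E_N^{1/2}\,ds$ with the $t/\tau$ weight, so \eqref{thm3} with decay $s^{\delta-n/2}$ is integrable for $n\geq4$, and this closes \eqref{thm1} with no loss.

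\textbf{Main obstacle: the $L^2$ estimate \eqref{thm3} at top order.} For $N-n+1\leq|A|\leq N$ the Sobolev inequality cannot be applied, since that would require $|A|+n>N$ derivatives on $\hat E_N$. We plan to split $f$ in the spirit of \cite{FJS17a,B20}. We decompose $\hat Z_A f=g_A+h_A$, where $g_A$ solves the inhomogeneous linear Vlasov equation $T_\phi g=\sum(\text{top-order }\partial Z_B\phi)\cdot(\text{low-order }f)$ with zero data, and $h_A$ carries the data and the remaining terms. The part $h_A$ is controlled by $\hat E_{N+n}$ through Klainerman--Sobolev, which is why the extra $n$ derivatives appear in the hypothesis. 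For $g_A$, we write it as $\sum K\cdot\partial Z_B\phi$, where $K$ solves homogeneous-type Vlasov equations with low-order sources, and derive pointwise decay $t^{-n}$ for $\int|K|dv$. Then Cauchy--Schwarz in $v$ against $E_N(\phi)$ gives $\varepsilon^2\tau^{2\delta-n}$. Controlling the linear system for $K$ (a matrix system, whose commutators must remain at low order) is the delicate step, and it is again where the logarithmic loss $\delta$ arises in $n=4$.
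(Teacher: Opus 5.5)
The energy estimate for $\phi$ and the small/large-$|B|$ split for $f$ match the paper. The bootstrap cannot close as set up, though, because it includes $\hat E_{N+n}(f)(\tau)$. By Lemma~\ref{lem TphiZhat3}, commuting $T_\phi$ with $\hat Z_A$, $|A|=N+n$, produces sources $Q^{\mu BC}_A\,\partial_{x^\mu}(Z_B\phi)\,\hat Z_C f$ with $|B|$ up to $N+n$. Only $E_N(\phi)$ is controlled, and only $E_N(\phi)(1)$ is assumed small. So that bound can never be recovered, and what you build on it (control of $h_A$, your route to \eqref{thm5}) is unsupported. The paper uses only the initial value $\hat E_{N+n}(f)(1)$, and only for a piece obeying a \emph{closed} linear system. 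Let $F$ be the block $\lfloor N/2\rfloor-n+1\le|A|\le N$ and $F_0$ the low block, so that $T_\phi F+R_1F=R_2F_0$ and $T_\phi F_0=R_3F_0$. Then $F_1$ solves $T_\phi F_1+R_1F_1=0$ with $F_1(1)=F(1)$. Since $R_1$ contains only $\partial Z_B\phi$ with $|B|\le N-\lfloor N/2\rfloor+n$, commuting $n$ more fields through this system stays within $E_N(\phi)$ after Klainerman--Sobolev; this is where $N\ge 5n+2$ is used, and Gronwall gives Lemma~\ref{lem homo}. In your splitting, the $R_1$-type terms (lower-order $\phi$ times top-block $f$) enter only the equation for $h_A$, where they act on $\hat Z_C f=g_C+h_C$. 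Then $h$ is forced by $R_1 g$, and $n$ derivatives of $h$ need $n$ derivatives of $g$, hence of top-order $\phi$, which is the same loss. The $R_1$-term must be kept on the left in both pieces: $T_\phi F_2+R_1F_2=R_2F_0$, $F_2(1)=0$.

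The ansatz $g_A=\sum K\cdot\partial Z_B\phi$ also fails. $T_\phi(K\,\partial Z_B\phi)$ contains $K\,T_\phi(\partial Z_B\phi)$, which carries $N+2$ derivatives of $\phi$. Moreover, the solution of a transport equation with source $\partial Z_B\phi\cdot(\mbox{low-order } f)$ is not a pointwise multiple of $\partial Z_B\phi$, since Duhamel's formula integrates the source along characteristics. The factorization must go the other way, as in \cite{FJS17}: $F_2=KF_0$, where $F_0$ solves its own closed system and the matrix $K$ solves $T_\phi K+R_1K+KR_3=R_2$, $K(1)=0$. The top-order field then appears only as the source $R_2$. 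No pointwise bound on $K$ is needed. One propagates $\hat E_K=\int_{H_\tau}\hat e(\|K\|^2|F_0|)\,d\mu_{H_\tau}$, using $\int_{H_\tau}\frac{\tau}{t^3}\sup_v\|R_2\|^2\,d\mu_{H_\tau}\lesssim\varepsilon$ and Cauchy--Schwarz against $\int|F_0|\,dv\lesssim t^{1-n}\tau^{-1}\hat E_N(f)$. One concludes from $\bigl(\int|F_2|\,dv\bigr)^2\le\int\|K\|^2|F_0|\,dv\int|F_0|\,dv$. Finally, \eqref{thm5} cannot come from unweighted energies, which through \eqref{KSf} give only $t^{1-n}\tau^{-1}$. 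It comes from the $v^0$-weighted low-order part of $\hat E_{N,1}$ via \eqref{KSfimprove}. You read that weight backwards: $\int(v^0)^{-1}|\hat Z_A f|\,dv$ is what the unweighted part controls, through \eqref{KSf} with $k=1$. Propagating the weight costs the extra term $T_\phi v^0=-v\cdot\nabla_x\phi$ (Lemma~\ref{lem intchiZf2}). The resulting $t^{-n}$ rate is what keeps the large-$|B|$ term of the energy estimate finite for $n=4$; with $t^{1-n}\tau^{-1}$ the $H_\lambda$-integral diverges logarithmically.
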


\begin{remark}
Initial data are small with respect to the energies $ E_N ( \phi ) $, $ \hat{ E }_{ N , 1 } ( f ) $ and $ \hat{ E }_{ N + n } ( f ) $, but the corresponding solution is small with respect only to the energies $ E_N ( \phi ) $ and $ \hat{ E }_{ N , 1 } ( f ) $. The smallness of the initial value of the higher order energy $ \hat{ E }_{ N + n } ( f ) $ will only be used to control the homogeneous part of the commuted Vlasov equation. Then, this will be used again to obtain the estimate \eqref{thm3}. See Lemma \ref{lem homo} for more details.
\end{remark}

\begin{remark}
If we assume compact support on initial data, then we obtain a lower bound of the quantity $ t - | x | $, from which an additional decay in time is obtained for small solutions. This was a key idea in \cite{GS87} for proving the global existence of small solutions to the VM system (see also Lemmas 6.3.1 and 6.3.2 of \cite{Gl}). In this paper we do not assume compact support, but instead obtain an additional decay by using the hyperboloidal foliation and the vector field method.
\end{remark}

\subsection{Structure of the paper}
In Section \ref{sec prelim}, we introduce preliminary materials. Hyperboloidal foliation of the Minkowski spacetime and the Killing vector fields with their complete lifts are introduced in Sections \ref{sec hf} and \ref{sec Kf}, respectively. Energy-momentum tensors and energy densities are introduced in Section \ref{sec em}, and they are used in Section \ref{sec e} to define the energies for the distribution function and the Klein--Gordon field. In Section \ref{sec be}, we present basic estimates for the energy densities and obtain the Klainerman--Sobolev inequalities for the distribution function and the Klein--Gordon field. In Section \ref{sec commutation}, we derive commuted equations for the Vlasov and the Klein--Gordon equations. In Section \ref{sec proof}, we prove the main theorem. The proof is an application of the bootstrap argument. In Section \ref{sec bootsassumption}, we make bootstrap assumptions on the Klein--Gordon field and the distribution function. The assumptions are improved in Sections \ref{sec improvephi} and \ref{sec improvef} by assuming further that the distribution function satisfies a certain $ L^2 $-estimate. The $ L^2 $-estimate is verified in Section \ref{sec L2}, and this will complete the proof of Theorem \ref{thm}.

\section{Preliminaries} \label{sec prelim}

\subsection{Hyperboloidal foliation} \label{sec hf}
In this paper we will consider the hyperboloidal foliation of the $ ( n + 1 ) $-dimensional Minkowski spacetime. Let $ ( t , x ) \in \bbr^{ n + 1 } $ be the Cartesian coordinate system in the Minkowski spacetime. Then we define for each $ \tau \geq 1 $ the hyperboloid $ H_\tau $ by
\begin{align*}
H_\tau = \left\{ ( t , x ) \in \bbr^{ n + 1 } : t \geq | x | , \, \tau = \sqrt{ t^2 - | x |^2 } \right\} ,
\end{align*}
and obtain the hyperboloidal foliation of the Minkowski spacetime:
\begin{align*}
\bigcup_{ \tau \geq 1 } H_\tau = \left\{ ( t , x ) \in \bbr^{ n + 1 } : t \geq \sqrt{ 1 + | x |^2 } \right\} .
\end{align*}
With this foliation we introduce the pseudo-Cartesian coordinate system $ ( \tau , y ) \in \bigcup_{ \tau \geq 1 } H_\tau $ defined by
\begin{align}
\tau = \sqrt{ t^2 - | x |^2 } , \qquad y = x . \label{tau}
\end{align}
For each $ \tau \geq 1 $ we have the induced volume form on $ H_\tau $ defined by
\begin{align}
d \mu_{ H_\tau } = \frac{ \tau }{ t } r^{ n - 1 } d r \, d \sigma_{ \bbs^{ n - 1 } } , \label{dmuH_tau}
\end{align}
where $ ( r , \sigma ) \in [ 0 , \infty ) \times \bbs^{ n - 1 } $ is the usual spherical coordinate system in $ \bbr^n $ with the standard volume form $ d \sigma_{ \bbs^{ n - 1 } } $ of $ \bbs^{ n - 1 } $. To each $ H_\tau $ we have the future unit normal $ \nu_\tau $ given by
\begin{align}
\nu_\tau = \frac{ 1 }{ \tau }( t \partial_t + r \partial_r ) . \label{nutau}
\end{align}
We note that
\begin{align}
\partial_{ y^i } = \partial_{ x^i } + \frac{ x^i }{ t } \partial_t . \label{partialy0}
\end{align}
This will be used to prove the Klainermann--Sobolev inequalities in Propositions \ref{prop KSf} and \ref{prop KSphi}. For more details about the hyperboloidal foliation we refer to \cite{FJS17, LM14}.

In this paper we study the VKG system in the Minkowski spacetime with the hyperboloidal foliation. The distribution function and the Klein--Gordon field will be understood as functions defined in
\begin{align*}
\bigcup_{ 1 \leq \tau < T } H_\tau \times \bbr^n_v , \qquad \bigcup_{ 1 \leq \tau < T } H_\tau ,
\end{align*}
for some $ T \in ( 1 , \infty ] $, respectively, and their initial data will be given on $ H_1 \times \bbr^n_v $ and $ H_1 $, respectively. 

\subsection{Killing fields} \label{sec Kf}
In order to apply the vector field method to the VKG system we consider the Killing fields in the Minkowski spacetime and their commutations with the differential operators for the Vlasov and the Klein--Gordon equations. For the Vlasov equation we define the transport operators by
\begin{align*}
& T = v^0 \partial_t + v \cdot \nabla_x , \\
& T_\phi = T - v^0 \nabla_x \phi \cdot \nabla_v ,
\end{align*}
which we may call the free and the perturbed transport operators, respectively. Then the Vlasov equation can be written as
\begin{align*}
T_\phi f = 0 .
\end{align*}
The Klein--Gordon operator will be denoted by $ \Box - 1 $, so that the Klein--Gordon equation can be written as
\begin{align*}
( \Box - 1 ) \phi = \int_{ \bbr^n } f \, d v .
\end{align*}
Now, we introduce the following vector fields:
\begin{align*}
\bbp = \{ \partial_t , \, \partial_{ x^i } , \, \Omega_{ 0 i } , \, \Omega_{ i j } : i , j = 1 , \dots , n \} ,
\end{align*}
where $ \Omega_{ 0 i } $ and $ \Omega_{ i j } $ are defined by
\begin{align*}
\begin{aligned}
& \Omega_{ 0 i } = t \partial_{ x^i } + x^i \partial_t , & & 1 \leq i \leq n , \\
& \Omega_{ i j } = x^i \partial_{ x^j } - x^j \partial_{ x^i } , & & 1 \leq i < j \leq n ,
\end{aligned}
\end{align*}
which are the Killing fields in the $ ( n + 1 ) $-dimensional Minkowski spacetime. Then \eqref{partialy0} can be written as
\begin{align}
\partial_{ y^i } = \frac{ 1 }{ t } \Omega_{ 0 i } . \label{partialy}
\end{align}
We will denote an arbitrary member of $ \bbp $ by
\begin{align*}
Z_a \in \bbp ,
\end{align*}
for $ a = 1 , \dots , ( n + 1 ) ( n + 2 ) / 2 $. Any compositions of the members of $ \bbp $ will be written by using the multi-index notation. Let $ q \in \bbn $ and $ A = ( a_1 , \dots , a_q ) $ be a $ q $-tuple of integers such that
\begin{align*}
a_i \in \left\{ 1 , \dots , \frac{ ( n + 1 ) ( n + 2 ) }{ 2 } \right\} ,
\end{align*}
for $ i = 1 , \dots , q $. Then we define
\begin{align}
Z_A : = Z_{ a_1 } \cdots Z_{ a_q } , \label{Z_A}
\end{align}
which is a differential operator of order $ q $, and write $| A | = q $, which we call the length of $ A $. Finally, we denote by $ \bbp^{ | A | } $ the set of all the possible differential operators in the form of \eqref{Z_A}. Any $ q $-tuple of integers will be referred to as a multi-index of order $ q $, when it is used in the above manner.

We also need to consider the complete lifts of $ \bbp $:
\begin{align*}
\hat{ \bbp } = \left\{ \hat{ \partial }_t , \, \hat{ \partial }_{ x^i } , \, \hat{ \Omega }_{ i j } , \, \hat{ \Omega }_{ 0 i } : i , j = 1 , \dots , n \right\} ,
\end{align*}
defined by
\begin{align*}
& \hat{ \partial }_t = \partial_t , \\
& \hat{ \partial }_{ x^i } = \partial_{ x^i } , \\
& \hat{ \Omega }_{ 0 i } = \Omega_{ 0 i } + v^0 \partial_{ v^i } , \\
& \hat{ \Omega }_{ i j } = \Omega_{ i j } + v^i \partial_{ v^j } - v^j \partial_{ v^i } .
\end{align*}
We will denote an arbitrary member of $ \hat{ \bbp } $ by
\begin{align*}
\hat{ Z }_a \in \hat{ \bbp } ,
\end{align*}
for $ a = 1 , \dots , ( n + 1 ) ( n + 2 ) / 2 $, and will use the multi-index notation for higher order derivatives. Let $ A = ( a_1 , \dots , a_q ) $ be a multi-index. Then we define
\begin{align*}
\hat{ Z }_A : = \hat{ Z }_{ a_1 } \cdots \hat{ Z }_{ a_q } ,
\end{align*}
and the set $ \hat{ \bbp }^{ | A | } $ is also defined in a similar way. For more details about the complete lifts to vector fields we refer to the Appendix of \cite{FJS17}.

\subsection{Energy-momentum tensor} \label{sec em}
We also need to introduce the energy-momentum tensors for the distribution function and the Klein--Gordon field. We define the energy-momentum tensor $ T^\rV $ for $ f $ as
\begin{align*}
& T^\rV_{ \mu \nu } [ f ] : = \int_{ \bbr^n } \frac{ v_\mu v_\nu }{ v^0 } f \, d v ,
\end{align*}
and define the energy density as
\begin{align*}
\hat{ e } ( f ) : = T^\rV [ f ] ( \partial_t , \nu_\tau ) = - \int_{ \bbr^n } \frac{ v_\mu x^\mu }{ \tau } f \, d v ,
\end{align*}
where $ \nu_\tau $ is the future unit normal \eqref{nutau} to $ H_\tau $. For the Klein--Gordon field we define the energy-momentum tensor $ T^\rK $ as
\begin{align*}
T^\rK [ \phi ] : = d \phi \otimes d \phi - \frac12 ( \eta ( \partial \phi , \partial \phi ) + \phi^2 ) \eta ,
\end{align*}
where $ \eta $ is the Minkowski metric, and $ \partial \phi $ denotes $ ( \partial_t \phi , \nabla_x \phi ) $. We define the energy density as
\begin{align*}
e  ( \phi ) : = T^\rK [ \phi ] ( \partial_t , \nu_\tau ) ,
\end{align*}
where $ \nu_\tau $ is the future unit normal \eqref{nutau} to $ H_\tau $.

\subsection{Energies} \label{sec e}
With the Killing fields in Section \ref{sec Kf} and the energy densities in Section \ref{sec em} we define the energy for the distribution function as
\begin{align}
\hat{ E }_N ( f ) & : = \sum_{ | A | \leq N } \int_{ H_\tau }  \hat{ e } ( | \hat{ Z }_A f | ) \, d \mu_{ H_\tau } , \label{fnorm} \\
\hat{ E }_{ N , 1 } ( f ) & : = \sum_{ | A | \leq \lfloor N / 2 \rfloor } \int_{ H_\tau } \hat{ e } ( v^0 | \hat{ Z }_A f | ) \, d \mu_{ H_\tau } + \sum_{ \lfloor N / 2 \rfloor + 1 \leq | A | \leq N } \int_{ H_\tau } \hat{ e } ( | \hat{ Z }_A f | ) \, d \mu_{ H_\tau } , \label{fnormw}
\end{align}
and the energy for the Klein--Gordon field as
\begin{align}
E_N ( \phi ) : = \sum_{ | A | \leq N } \int_{ H_\tau } e ( Z_A \phi ) \, d \mu_{ H_\tau } . \label{phinorm}
\end{align}
Here, $ \lfloor \cdot \rfloor $ denotes the floor function, so that $ \lfloor N / 2 \rfloor $ is the greatest integer satisfying $ \lfloor N / 2 \rfloor \leq N / 2 $. The summations are taken over all the possible multi-indices $ A $ satisfying $ | A | \leq N $ in \eqref{fnorm} and \eqref{phinorm}, and $ | A | \leq \lfloor N / 2 \rfloor $ and $ \lfloor N / 2 \rfloor + 1 \leq | A | \leq N $ in \eqref{fnormw}.


%
\section{Basic estimates} \label{sec be}
In this section we introduce the basic estimates which are necessary for the application of the vector field method to the VKG system. In Section \ref{sec be1} we consider the estimates of the energy densities. The estimates in Section \ref{sec be1} will be extended to the Klainerman--Sobolev inequalities in Section \ref{sec be2}.

\subsection{Estimates of the energy densities} \label{sec be1}

\subsubsection{Energy density for the distribution function} \label{sec basicf}
Let us first consider the energy density $ \hat{ e } $ for the distribution function. Lemma \ref{lem chiintf} below shows that $ \hat{ e } $ can be used to control the $ L^1_v $-norm of $ f $. In Lemma \ref{lem OmegaOmegahat} we show that the integration over $ \bbr^n_v $ commutes with the vector fields of $ \bbp $ or $ \hat{ \bbp } $. Lemma \ref{lem intchif} is an application of the divergence theorem to the transport equation with an inhomogeneous term.

\begin{lemma} \label{lem chiintf}
For $ t \geq r $ we have the following inequalities:
\begin{align*}
\hat{ e } ( | f | ) & \geq \frac{ t }{ 2 \tau } \int_\bbrn \frac{ 1 }{ v^0 } | f | \, d v , \\
\hat{ e } ( | f | ) & \geq \frac{ \tau }{ 2 ( t + r ) } \int_\bbrn v^0 | f | \, d v , \\
\hat{ e } ( | f | ) & \geq \int_\bbrn | f | \, d v .
\end{align*}
\end{lemma}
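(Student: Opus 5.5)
The plan is to compute the energy density explicitly and reduce all three inequalities to pointwise bounds on the weight multiplying $|f|$ in the $v$-integral. With the signature $\eta = \mathrm{diag}(-1,1,\dots,1)$ we have $v_0 = -v^0$, so
\begin{align*}
\hat{ e } ( | f | ) = \frac{ 1 }{ \tau } \int_\bbrn \left( t v^0 - x \cdot v \right) | f | \, d v ,
\end{align*}
and it suffices to bound the weight $w := t v^0 - x\cdot v$ from below. By Cauchy--Schwarz, $w \geq t v^0 - r|v|$.

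For the first inequality we need $t v^0 - r|v| \geq \frac{t}{2v^0}$, i.e.\ $t (v^0)^2 - r|v|v^0 \geq t/2$. Using $r \leq t$ and $(v^0)^2 - |v|^2 = 1$,
\begin{align*}
v^0 ( v^0 - | v | ) = \frac{ v^0 }{ v^0 + | v | } \geq \frac12 ,
\end{align*}
so $t(v^0)^2 - r|v|v^0 \geq t\, v^0(v^0-|v|) \geq t/2$, which gives the first inequality.

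For the second, the key step is to rewrite the weight so that $\tau^2 = t^2 - r^2$ appears:
\begin{align*}
t v^0 - r | v | = \frac{ t^2 (v^0)^2 - r^2 | v |^2 }{ t v^0 + r | v | } \geq \frac{ ( t^2 - r^2 ) (v^0)^2 }{ ( t + r ) v^0 } = \frac{ \tau^2 v^0 }{ t + r } ,
\end{align*}
where we used $t^2 (v^0)^2 - r^2|v|^2 \geq (t^2 - r^2)(v^0)^2$ (because $r^2 (v^0)^2 \geq r^2 |v|^2$) and $t v^0 + r|v| \leq (t+r)v^0$. Dividing by $\tau$ yields $\frac{\tau}{t+r}\int v^0|f|\,dv$, which is even stronger than the stated bound with $\frac12$.

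For the third inequality we need $w \geq \tau$. Writing $v = v^0 \hat v$ with $|\hat v|<1$, we have $w = v^0 (t - x\cdot \hat v) \geq \frac{t - r|\hat v|}{\sqrt{1-|\hat v|^2}}$. Minimizing over $s = |\hat v| \in [0,1)$, the minimum is attained at $s = r/t$, where the value is $\sqrt{t^2 - r^2} = \tau$. Equivalently, by the reverse Cauchy--Schwarz inequality for the two future timelike vectors $(t,x)$ and $(v^0,v)$, we get $-\eta((t,x),(v^0,v)) \geq \tau \cdot 1$. This gives $\hat e(|f|) \geq \int |f|\,dv$. The only mildly delicate step is this reverse Cauchy--Schwarz/minimization; everything else is elementary algebra.
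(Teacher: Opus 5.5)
Your proof is correct: the formula $\hat e(|f|) = \tau^{-1}\int_\bbrn (t v^0 - x\cdot v)\,|f|\,dv$ is right, and so are the pointwise bounds $t v^0 - r|v| \geq t/(2v^0)$ and $t v^0 - r|v| \geq \tau^2 v^0/(t+r)$ (the second even without the factor $\frac12$), as is the reverse Cauchy--Schwarz bound $t v^0 - x\cdot v \geq \tau$. The paper gives no proof of its own and defers to Lemma 2.11 and Remark 2.12 of \cite{FJS17}, where the argument follows the same route: a pointwise lower bound on $-v_\mu x^\mu$ plus reverse Cauchy--Schwarz for future timelike vectors.
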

\begin{proof}
We refer to Lemma 2.11 and Remark 2.12 of \cite{FJS17} for the proof of this lemma.
\end{proof}

\begin{lemma} \label{lem OmegaOmegahat}
For $ k = 0 , 1 $, we have the following identities:
\begin{enumerate}
\item For $ \mu = 0 , \dots , n $,
\begin{align*}
\partial_{ x^\mu } \int_\bbrn \frac{ 1 }{ ( v^0 )^k } f \, d v = \int_\bbrn \frac{ 1 }{ ( v^0 )^k } \hat{ \partial }_{ x^\mu } f \, d v .
\end{align*}
\item For $ 1 \leq i < j \leq n $,
\begin{align*}
\Omega_{ i j } \int_\bbrn \frac{ 1 }{ ( v^0 )^k } f \, d v = \int_\bbrn \frac{ 1 }{ ( v^0 )^k } \hat{ \Omega }_{ i j } f \, d v .
\end{align*}
\item For $ i = 1 , \dots , n $,
\begin{align*}
\Omega_{ 0 i } \int_\bbrn \frac{ 1 }{ ( v^0 )^k } f \, d v = \int_\bbrn \frac{ 1 }{ ( v^0 )^k } \hat{ \Omega }_{ 0 i } f \, d v + ( 1 - k ) \int_\bbrn \frac{ v^i }{ ( v^0 )^{ k + 1 } } f \, d v .
\end{align*}
\item All the above identities hold with $ f $ replaced by $ | f | $.
\end{enumerate}
\end{lemma}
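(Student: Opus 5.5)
The plan is to differentiate under the integral sign and then move the velocity derivatives in the lifted fields onto the weight $ ( v^0 )^{ - k } $ by integrating by parts in $ v $. We assume $ f $ is regular, in particular that it decays fast enough in $ v $ for this to be justified with no boundary terms. Identity 1 is immediate: $ \hat{ \partial }_{ x^\mu } = \partial_{ x^\mu } $ and the weight does not depend on $ ( t , x ) $.

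For identity 2 we write $ \hat{ \Omega }_{ i j } f = \Omega_{ i j } f + ( v^i \partial_{ v^j } - v^j \partial_{ v^i } ) f $. The spatial part commutes with the $ v $-integral, so it remains to show
\begin{align*}
\int_\bbrn \frac{ 1 }{ ( v^0 )^k } ( v^i \partial_{ v^j } - v^j \partial_{ v^i } ) f \, d v = 0 .
\end{align*}
Integrating by parts gives $ - \int f \, ( \partial_{ v^j } ( v^i ( v^0 )^{ - k } ) - \partial_{ v^i } ( v^j ( v^0 )^{ - k } ) ) \, d v $. Using $ \partial_{ v^j } v^0 = v^j / v^0 $, the integrand becomes $ - k \, ( v^i v^j - v^j v^i ) ( v^0 )^{ - k - 2 } = 0 $. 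This reflects the fact that rotations preserve $ v^0 $.

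For identity 3 we write $ \hat{ \Omega }_{ 0 i } f = \Omega_{ 0 i } f + v^0 \partial_{ v^i } f $. Again $ \Omega_{ 0 i } $ passes through the integral, and integration by parts gives
\begin{align*}
\int_\bbrn ( v^0 )^{ 1 - k } \partial_{ v^i } f \, d v = - ( 1 - k ) \int_\bbrn \frac{ v^i }{ ( v^0 )^{ k + 1 } } f \, d v .
\end{align*}
Rearranging yields the stated formula. For $ k = 1 $ the weight $ ( v^0 )^{ 1 - k } $ is constant, so the correction term vanishes.

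For identity 4 we note that $ | f | $ is Lipschitz whenever $ f $ is regular. Almost everywhere, $ \hat{ Z } | f | = \mathrm{sgn} ( f ) \, \hat{ Z } f $, and the integration by parts in $ v $ remains valid for Lipschitz functions with the same decay. Hence the same computation applies verbatim, with $ \hat{ Z } | f | $ read as the weak derivative. The main obstacle, though a mild one, is this justification: the decay in $ v $ and the regularity of $ | f | $. We would handle it by approximating $ | f | $ by $ \sqrt{ f^2 + \epsilon^2 } - \epsilon $ and letting $ \epsilon \to 0 $ by dominated convergence.
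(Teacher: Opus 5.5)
Your proposal is correct and follows the paper's argument. The paper also passes the spacetime fields through the $v$-integral and integrates $(v^0)^{1-k}\partial_{v^i} f$ by parts to produce the $(1-k)$ term in identity 3, while it simply declares identities 1, 2 and 4 clear. Your explicit check that the rotation term integrates to zero, and your weak-derivative justification for $|f|$ (which is the sense actually needed in the Klainerman--Sobolev argument), are correct additions.
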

\begin{proof}
The first and the second identities are clear, and the third one is given by
\begin{align*}
\Omega_{ 0 i } \int \frac{ 1 }{ ( v^0 )^k } f \, d v & = \int \frac{ 1 }{ ( v^0 )^k } \hat{ \Omega }_{ 0 i } f \, d v - \int \frac{ 1 }{ ( v^0 )^{ k - 1 } } \partial_{ v^i } f \, d v \\
& = \int \frac{ 1 }{ ( v^0 )^k } \hat{ \Omega }_{ 0 i } f \, d v + ( 1 - k ) \int \frac{ v^i }{ ( v^0 )^{ k + 1 } } f \, d v .
\end{align*}
This completes the proof of the lemma.
\end{proof}

\begin{lemma} \label{lem intchif}
Let $ f $ and $ h $ be regular functions satisfying $ T_\phi f = h $ defined in $ \bigcup_{ 1 \leq \tau < T } H_\tau \times \bbr^n_v $. Then we have
\begin{align*}
& \int_{ H_\tau } \hat{ e } ( | f |) \, d \mu_{ H_\tau } \leq \int_{ H_1 } \hat{ e } ( | f | ) \, d \mu_{ H_1 } + \int_1^\tau \int_{ H_\lambda } \int_\bbrn \left( | h | + | \nabla_x \phi | | f | \right) d v \, d \mu_{ H_\lambda } d \lambda .
\end{align*}
\end{lemma}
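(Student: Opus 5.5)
We prove Lemma \ref{lem intchif} with the divergence theorem applied to the current $ N^\mu = \int_\bbrn \frac{ v^\mu }{ v^0 } | f | \, d v $ (with $ v^\mu = ( v^0 , v ) $), on the spacetime region between $ H_1 $ and $ H_\tau $. The key identity is that $ T $ in divergence form gives
\begin{align*}
\partial_\mu N^\mu = \int_\bbrn \frac{ 1 }{ v^0 } T | f | \, d v .
\end{align*}
Next we rewrite the right-hand side. We have $ T | f | = T_\phi | f | + v^0 \nabla_x \phi \cdot \nabla_v | f | $ and $ | T_\phi | f | | \leq | h | $ almost everywhere, since $ T_\phi | f | = \mathrm{sgn}( f ) \, T_\phi f $ for regular $ f $; if needed we regularize by $ \sqrt{ f^2 + \epsilon^2 } - \epsilon $ and let $ \epsilon \to 0 $. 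The force term integrates in $ v $ to $ \int \nabla_x \phi \cdot \nabla_v | f | \, d v = 0 $ by integrating by parts in $ v $, since the coefficient $ \nabla_x \phi $ does not depend on $ v $. Consequently
\begin{align*}
| \partial_\mu N^\mu | \leq \int_\bbrn \frac{ | h | }{ v^0 } \, d v \leq \int_\bbrn \left( | h | + | \nabla_x \phi | | f | \right) d v ,
\end{align*}
so the second term on the right is not even needed; it is harmless and is simply kept in the stated bound.

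Then we apply Stokes' theorem on the region $ \{ 1 \leq \sqrt{ t^2 - | x |^2 } \leq \tau \} $. The flux of $ N $ through $ H_\lambda $ with respect to the unit normal $ \nu_\lambda $ and $ d \mu_{ H_\lambda } $ is
\begin{align*}
- \eta ( N , \nu_\lambda ) = \int_\bbrn \frac{ t v^0 - x \cdot v }{ \lambda v^0 } | f | \, d v .
\end{align*}
This should match $ \hat{ e } ( | f | ) $ as defined through $ T^\rV [ | f | ] ( \partial_t , \nu_\lambda ) $. I will check the sign and normalization conventions carefully here, including that $ v_0 = - v^0 $ and the weight $ 1 / v^0 $. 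This step is the main place where something could go wrong.

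Finally, the spacetime volume element decomposes as $ d t \, d x = \frac{ \lambda }{ t } d \lambda \, d x $, while $ d \mu_{ H_\lambda } = \frac{ \lambda }{ t } d x $ by \eqref{dmuH_tau}. So the bulk integral is exactly $ \int_1^\tau \int_{ H_\lambda } ( \cdot ) \, d \mu_{ H_\lambda } d \lambda $ up to the lapse, and the lapse factor is $ 1 $ because $ \nu_\lambda $ is unit and $ d \lambda $ corresponds to unit proper-time separation times $ \lambda / t $. I will verify this by the coarea computation $ | \nabla \lambda |_\eta = 1 $. The regularity and decay at spatial infinity of regular solutions make the lateral boundary contributions vanish. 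Combining these gives the inequality.
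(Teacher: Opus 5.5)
\textbf{The gap is at the step you flagged yourself: your current has the wrong $v^0$-weight.}

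With $N^\mu = \int_\bbrn \frac{v^\mu}{v^0}|f|\,dv$ you get $-\eta(N,\nu_\lambda) = \int_\bbrn \frac{tv^0 - x\cdot v}{\lambda v^0}|f|\,dv$. But by definition
\[
T^\rV_{0\nu}[|f|] = \int_\bbrn \frac{v_0 v_\nu}{v^0}|f|\,dv = -\int_\bbrn v_\nu |f|\,dv ,
\]
so $\hat e(|f|) = \int_\bbrn \frac{tv^0 - x\cdot v}{\lambda}|f|\,dv$. Your flux is therefore $\hat e(|f|/v^0)$, off by a full power of $v^0$. What your argument proves is conservation of the particle-number current, not the energy inequality of the lemma.

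This is not a harmless normalization. The paper needs $\hat e(|f|) \geq \int_\bbrn |f|\,dv$ (Lemma \ref{lem chiintf}) and also the weighted quantities $\hat e(v^0|\hat Z_A f|)$. Neither is controlled by $\hat e(|f|/v^0)$.

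\textbf{The fix, and why the second term is needed.} The correct current is $J^\mu = \int_\bbrn v^\mu|f|\,dv$. Up to sign this is $T^\rV_{\mu 0}[|f|]$, which is the paper's choice, and it satisfies $\partial_\mu J^\mu = \int_\bbrn T|f|\,dv$. Now the force term carries the $v$-dependent coefficient $v^0\nabla_x\phi$. Integrating by parts gives
\[
\int_\bbrn v^0\nabla_x\phi\cdot\nabla_v|f|\,dv = -\int_\bbrn \frac{v}{v^0}\cdot\nabla_x\phi\,|f|\,dv ,
\]
which does not vanish. Bounding it by $\int_\bbrn|\nabla_x\phi||f|\,dv$ is exactly where the second term of the lemma comes from. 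So your remark that this term is ``not even needed'' is false for the stated energy: the field does work on the particles, so the energy flux is not conserved even though the number current is.

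Equivalently, you could keep your number-current argument but apply it to $v^0 f$. Using $T_\phi(v^0 f) = v^0 h - (v\cdot\nabla_x\phi)f$, the flux becomes $\hat e(|f|)$ and the bulk term is bounded by $\int_\bbrn(|h| + |\nabla_x\phi||f|)\,dv$.

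The rest of your setup is sound: the identity $dt\,dx = d\mu_{H_\lambda}\,d\lambda$, the flux formula through $H_\lambda$, and the regularization of $|f|$. Once the current is corrected, it reproduces the paper's divergence-theorem argument.
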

\begin{proof}
Note that the divergence of $ T^\rV $ is given by
\begin{align*}
\partial^\mu T^\rV_{ \mu 0 } & = \int_\bbrn \frac{ v_\mu v_0 }{ v^0 } \partial^\mu f \, d v \\
& = - \int_\bbrn T f \, d v .
\end{align*}
Integrating this over $ \bigcup_{ 1 \leq \lambda \leq \tau } H_\lambda $ we obtain from the right hand side
\begin{align*}
- \int_1^\tau \int_{ H_\lambda } \int_\bbrn T f \, d v \, d \mu_{ H_\lambda } d \lambda & = - \int_1^\tau \int_{ H_\lambda } \int_\bbrn h + v^0 \nabla_x \phi \cdot \nabla_v f \, d v \, d \mu_{ H_\lambda } d \lambda \\
& = - \int_1^\tau \int_{ H_\lambda } \int_\bbrn h - \left( \frac{ v }{ v^0 } \cdot \nabla_x \phi \right) f \, dv \, d \mu_{H_\lambda} d \lambda ,
\end{align*}
and from the left hand side
\begin{align*}
\int_1^\tau \int_{ H_\lambda } \partial^\mu T^\rV_{ \mu 0 } \, d \mu_{H_\lambda} d \lambda & = - \int_{ H_\tau } T^\rV_{ \mu 0 } ( \nu_\tau )^\mu d \mu_{ H_\tau } + \int_{ H_1 } T^\rV_{ \mu 0 } ( \nu_1 )^\mu d \mu_{ H_1 } \\
& = - \int_{ H_\tau } \hat{ e } ( f ) \, d \mu_{ H_\tau } + \int_{ H_1 } \hat{ e } ( f ) \, d \mu_{ H_1 } ,
\end{align*}
where $ \nu_\tau $ and $ \nu_1 $ are the future unit normals to $ H_\tau $ and $ H_1 $, respectively. Now, by the same argument as in Lemma 4.3 of \cite{FJS17}, we obtain the desired inequality for $ | f | $.
\end{proof}

\subsubsection{Energy density for the Klein--Gordon field} \label{sec basicphi}
Applying the future unit normal $ \nu_\tau $ given by \eqref{nutau} to the definition of the energy density $ e $ for the Klein--Gordon field we obtain the following estimate:
\begin{align*}
e ( \phi ) & = \frac{ t }{ 2 \tau } \left( (\partial_t \phi )^2 + | \nabla_x \phi |^2 + \phi^2 \right) + \frac{ r }{ \tau } ( \partial_t \phi ) ( \partial_r \phi ) \\
& = \frac{ t }{ 2 \tau } \phi^2 + \frac{ r }{ 2 \tau } ( \partial_t \phi + \partial_r \phi )^2 + \frac{ t - r }{ 2 \tau } ( \partial_t \phi )^2 + \frac{ t }{ 2 \tau } | \nabla_x \phi |^2 - \frac{ r }{ 2 \tau } ( \partial_r \phi )^2 \\
& \geq \frac{ t }{ 2 \tau } \phi^2 + \frac{ t - r }{ 2 \tau } ( \partial_t \phi )^2 + \frac{ t - r }{ 2 \tau } | \nabla_x \phi |^2 .
\end{align*}
This implies that
\begin{align}
e ( \phi ) \geq \frac{ t }{ 2 \tau } \phi^2 + \frac{ \tau }{ 2 ( t + r ) } | \partial \phi |^2 , \label{chiphiphi}
\end{align}
by the definition \eqref{tau} of $ \tau $. This inequality shows that the energy \eqref{phinorm} is basically an $ L^2 $-norm, while the energies \eqref{fnorm} and \eqref{fnormw} are $ L^1 $-norms (see Lemma \ref{lem chiintf}).

The following lemma is an application of the divergence theorem to the Klein--Gordon equation with an inhomogeneous term.

\begin{lemma} \label{lem chiphiest}
Let $ \phi $ and $ h $ be regular functions satisfying $ ( \Box -1 ) \phi = h $ defined in $ \bigcup_{ 1 \leq \tau < T } H_\tau $. Then we have
\begin{align*}
\int_{ H_\tau } e ( \phi ) \, d \mu_{ H_\tau } = \int_{ H_1 } e ( \phi ) \, d \mu_{ H_1 } - \int_1^\tau \int_{ H_\lambda } h \partial_t \phi \, d \mu_{ H_\lambda } d \lambda .
\end{align*}
\end{lemma}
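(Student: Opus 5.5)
We prove Lemma \ref{lem chiphiest} by the divergence theorem, exactly as in Lemma \ref{lem intchif}, now applied to the energy current built from $ T^\rK [ \phi ] $ and the Killing field $ \partial_t $. Set $ J^\mu := T^{ \rK \, \mu }{}_{ \nu } [ \phi ] ( \partial_t )^\nu = ( T^\rK [ \phi ] )^\mu{}_0 $. Then $ e ( \phi ) = T^\rK [ \phi ] ( \partial_t , \nu_\tau ) = J_\mu \nu_\tau^\mu $, with the sign conventions fixed so that the flux through $ H_\tau $ is $ \int_{ H_\tau } e ( \phi ) \, d \mu_{ H_\tau } $, matching the computation of $ e ( \phi ) $ in Section \ref{sec basicphi}.

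The first step is to compute the divergence. With $ \eta = \mathrm{diag} ( - 1 , 1 , \dots , 1 ) $ and $ \Box = \eta^{ \mu \nu } \partial_\mu \partial_\nu $, a direct calculation gives
\begin{align*}
\partial^\mu T^\rK_{ \mu \nu } [ \phi ] = \Box \phi \, \partial_\nu \phi + \partial^\mu \phi \, \partial_\mu \partial_\nu \phi - \partial_\nu \partial^\alpha \phi \, \partial_\alpha \phi - \phi \partial_\nu \phi = ( \Box \phi - \phi ) \partial_\nu \phi .
\end{align*}
The two middle terms cancel by symmetry of second derivatives. Taking $ \nu = 0 $ and using $ ( \Box - 1 ) \phi = h $ yields $ \partial^\mu T^\rK_{ \mu 0 } = h \, \partial_t \phi $. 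Since $ \partial_t $ is Killing (indeed constant), no deformation-tensor term appears.

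The second step is to integrate this identity over the region $ \bigcup_{ 1 \leq \lambda \leq \tau } H_\lambda $, with spacetime volume element written as $ d \mu_{ H_\lambda } d \lambda $ as in Lemma \ref{lem intchif}. The boundary consists of $ H_\tau $ and $ H_1 $, with unit normals $ \nu_\tau , \nu_1 $. Regularity means sufficient decay in $ | x | $ along each hyperboloid, so there is no contribution from spatial infinity. Exactly as in Lemma \ref{lem intchif}, the left side becomes
\begin{align*}
- \int_{ H_\tau } T^\rK_{ \mu 0 } \nu_\tau^\mu \, d \mu_{ H_\tau } + \int_{ H_1 } T^\rK_{ \mu 0 } \nu_1^\mu \, d \mu_{ H_1 } = - \int_{ H_\tau } e ( \phi ) \, d \mu_{ H_\tau } + \int_{ H_1 } e ( \phi ) \, d \mu_{ H_1 } .
\end{align*}
The right side is $ \int_1^\tau \int_{ H_\lambda } h \, \partial_t \phi \, d \mu_{ H_\lambda } d \lambda $. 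Rearranging gives the stated identity. Note that this is an equality, not an inequality, since there is no absolute value to handle here.

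The only point requiring care is the bookkeeping of signs and of the volume-form decomposition: checking that the Minkowski divergence theorem on this foliated region produces $ - \int_{ H_\tau } + \int_{ H_1 } $ with the same normalization as in Lemma \ref{lem intchif}. It also requires checking that $ T^\rK ( \partial_t , \nu_\tau ) $ with $ \nu_\tau $ future-directed has the sign consistent with the positive expression for $ e ( \phi ) $ derived in Section \ref{sec basicphi}. I would simply reuse the conventions already fixed in the proof of Lemma \ref{lem intchif}. Since the divergence computation is the only new ingredient, the rest is identical.
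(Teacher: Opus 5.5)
Your proposal is correct and follows the paper's proof essentially verbatim. You compute $\partial^\mu T^{\rm KG}_{\mu\nu}[\phi] = (\Box\phi - \phi)\,\partial_\nu\phi = h\,\partial_\nu\phi$, integrate the $\nu = 0$ component over $\bigcup_{1\le\lambda\le\tau}H_\lambda$ using $dt\,dx = (\lambda/t)\,d\lambda\,dy = d\mu_{H_\lambda}\,d\lambda$, and identify the boundary terms with $-\int_{H_\tau}e(\phi)\,d\mu_{H_\tau} + \int_{H_1}e(\phi)\,d\mu_{H_1}$; the sign bookkeeping you flag checks out, and the resulting density matches the paper's expression for $e(\phi)$.
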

\begin{proof}
Divergence of the energy-momentum tensor for the Klein--Gordon field is given by
\begin{align*}
\partial^\mu T^\rK_{ \mu \nu } & = ( \partial^\mu \partial_\mu \phi ) ( \partial_\nu \phi ) + ( \partial_\mu \phi ) ( \partial^\mu \partial_\nu \phi ) - \frac12 \left( 2 ( \partial^\mu \partial^\kappa \phi ) ( \partial_\kappa \phi ) + 2 \phi ( \partial^\mu \phi ) \right) \eta_{ \mu \nu } \\
& = ( \phi + h ) \partial_\nu \phi - \phi ( \partial_\nu \phi ) \\
& = h ( \partial_\nu \phi ) .
\end{align*}
Integrating the zeroth component over $ \bigcup_{ 1 \leq \lambda \leq \tau } H_\lambda $ we obtain from the right hand side
\begin{align*}
\int_1^\tau \int_{ H_\lambda } h ( \partial_t \phi ) \, d \mu_{ H_\lambda } d \lambda ,
\end{align*}
and from the left hand side
\begin{align*}
\int_1^\tau \int_{ H_\lambda } \partial^\mu T^\rK_{ \mu 0 } \, d \mu_{ H_\lambda } d \lambda & = - \int_{ H_\tau } T^\rK_{ \mu 0 } ( \nu_\tau )^\mu d \mu_{ H_\tau } + \int_{ H_1 } T^\rK_{ \mu 0 } ( \nu_1 )^\mu d \mu_{ H_1 } \\
& = - \int_{ H_\tau } e ( \phi ) \, d \mu_{ H_\tau } + \int_{ H_1 } e ( \phi ) \, d \mu_{ H_1 } ,
\end{align*}
where $ \nu_\tau $ and $ \nu_1 $ are the future unit normals to $ H_\tau $ and $ H_1 $, respectively. This completes the proof.
\end{proof}

\subsection{Klainerman--Sobolev inequalities} \label{sec be2}
In order to apply the vector field method to the VKG system we need to derive the Klainerman--Sobolev inequalities for the distribution function and the Klein--Gordon field. We introduce the Klainerman--Sobolev inequalities for the distribution function in Proposition \ref{prop KSf} and the ones for the Klein--Gordon field in Proposition \ref{prop KSphi}.

\begin{proposition} \label{prop KSf}
Let $ f $ be a regular distribution function defined in $ \bigcup_{ 1 \leq \tau < T } H_\tau \times \bbrn $ for some $ T \in ( 1 , \infty ] $. Then we have
\begin{align}
\int_\bbrn \frac{ 1 }{ ( v^0 )^k } | f | ( t , x , v ) \, d v & \lesssim \frac{ 1 }{ t^{ n - 1 + k } \tau^{ 1 - k } } \hat{ E }_n ( f ) ( \tau ) , \qquad k = 0 , 1 , \label{KSf} \\
\int_\bbrn | f | ( t , x , v ) \, d v & \lesssim \frac{ 1 }{ t^n } \sum_{ | A | \leq n } \int_{ H_\tau } \hat{ e } ( v^0 | \hat{ Z }_A f | ) \, d \mu_{ H_\tau } , \label{KSfimprove}
\end{align}
for $ t $, $ x $ and $ \tau $ satisfying \eqref{tau}.
\end{proposition}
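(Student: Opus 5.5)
The plan is to follow the proof of the Klainerman--Sobolev inequality on hyperboloids in \cite{FJS17}, adapted to dimension $n$. Fix $(t,x)$ with $\tau=\sqrt{t^2-|x|^2}$ and set $\psi_k(\tau,y):=\int_{\bbrn}(v^0)^{-k}|f|(t(\tau,y),y,v)\,dv$ on $H_\tau$. I treat the regions $|y|\le 1$ (where $t\sim\tau$) and $|y|\ge1$ separately, and in each region I apply a local Sobolev embedding $W^{n,1}\hookrightarrow L^\infty$ in $n$ variables to a rescaled version of $\psi_k$.

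\textbf{Step 1: derivatives of $\psi_k$ along $H_\tau$ are controlled by $\hat Z_A f$.} By \eqref{partialy}, $\partial_{y^i}=t^{-1}\Omega_{0i}$. Hence, after rescaling by $t$, derivatives along $H_\tau$ are exactly $\Omega_{0i}$. Lemma \ref{lem OmegaOmegahat} then moves $\Omega_{0i}$ and $\Omega_{ij}$ inside the $v$-integral as $\hat\Omega_{0i}$, $\hat\Omega_{ij}$. This produces error terms $\int v^i (v^0)^{-k-1}|f|\,dv$, which are bounded by $\int (v^0)^{-k}|f|\,dv$, and these can be iterated. The same holds for $|f|$ in place of $f$, with the usual regularity caveats for $|\cdot|$ as in \cite{FJS17}.

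\textbf{Step 2: local Sobolev in rescaled coordinates.} For $|y|\ge1$, I use polar coordinates $y=r\sigma$ and rescale to a ball of unit size in the variables $(r/t,\ \text{angles})$. Here angular derivatives correspond to the $\Omega_{ij}$, and $t\,\partial_r$ along $H_\tau$ corresponds to a combination of the $\Omega_{0i}$. Applying $W^{n,1}$ Sobolev on a region of size $\sim t$ in $y$ gives
\[
\psi_k(t,x)\lesssim t^{-n}\sum_{|A|\le n}\int_{B}\Big(\int (v^0)^{-k}|\hat Z_Af|\,dv\Big)\,dy ,
\]
and $dy$ is converted via $d\mu_{H_\tau}=\frac{\tau}{t}\,dy$. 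Since $t$ is comparable over the region, this gives a factor $\frac{t}{\tau}\,d\mu_{H_\tau}$. For $|y|\le 1$, the simple Sobolev inequality in $y$ suffices, since $t\sim\tau$.

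\textbf{Step 3: absorb the weights using Lemma \ref{lem chiintf}.} For $k=1$, I use $\hat e(|g|)\ge \frac{t}{2\tau}\int |g|/v^0$. This gives $\psi_1\lesssim t^{-n}\hat E_n(f)$, which matches \eqref{KSf} with $t^{n}\tau^0$.

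For $k=0$, I instead use $\hat e(|g|)\ge\int|g|$. The leftover factor $t/\tau$ then yields $t^{-n}\cdot \frac t\tau=\frac{1}{t^{n-1}\tau}$, as claimed.

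For \eqref{KSfimprove}, I apply the second inequality of Lemma \ref{lem chiintf} to $v^0|\hat Z_Af|$: $\hat e(v^0|g|)\ge \frac{\tau}{2(t+r)}\int (v^0)^2|g|\ge \frac{\tau}{4t}\int|g|$. This cancels the $t/\tau$ factor and gives $t^{-n}$.

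\textbf{Main obstacle.} The delicate point is Step 2: the rescaled Sobolev region must stay inside $H_\tau$ with $t$ uniformly comparable to its value at the base point. Each coordinate derivative must also be written as a bounded combination of elements of $\hat{\bbp}$ divided by $t$. I would handle this with the dyadic/polar decomposition of \cite{FJS17}, checking that the radial derivative $\partial_r|_{H_\tau}=\frac{x^i}{r}\partial_{y^i}=\frac{1}{t}\frac{x^i}{r}\Omega_{0i}$ has bounded coefficients, whose own derivatives are harmless away from $r=0$.
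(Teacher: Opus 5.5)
Your bookkeeping for \eqref{KSf} in Step 3 is correct, but the derivation of \eqref{KSfimprove} fails as written. After Step 2 you need $\frac{t}{\tau}\int_{\bbrn}|g|\,dv\lesssim\hat e(v^0|g|)$ with $g=\hat Z_Af$. The second inequality of Lemma \ref{lem chiintf} gives only $\int|g|\,dv\le\frac{4t}{\tau}\hat e(v^0|g|)$. That factor multiplies the leftover $t/\tau$ instead of cancelling it, so you end with $t^{2-n}\tau^{-2}$, which is weaker even than \eqref{KSf} with $k=0$. The inequality you need is the first one in Lemma \ref{lem chiintf}, applied to $v^0|g|$: $\hat e(v^0|g|)\ge\frac{t}{2\tau}\int\frac{1}{v^0}\,v^0|g|\,dv=\frac{t}{2\tau}\int|g|\,dv$. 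This is what the paper does when it applies \eqref{KSf} with $k=1$ to $v^0f$.

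Step 2 also has a hole, and the polar machinery is not needed. You split at $|y|=1$, although $t\sim\tau$ holds on all of $|x|\le t/2$. For $|y|\ge1$ you use $(\rho,\sigma)=(r/t,\sigma)$. When $1\le|x|\ll t$, any region around the base point contains $\rho\sim|x|/t\ll1$. There $d\rho\,d\sigma=t^{-n}\rho^{1-n}\,dy$, so the $t^{-n}$ gain is lost by a factor up to $(t/|x|)^{n-1}$.

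The paper uses a single Cartesian argument at every point. It sets $\psi(y)=\int(v^0)^{-k}|f|(\tau,x+ty,v)\,dv$ on $|y|\le\frac18$ and uses $\partial_{y^i}\psi=\frac{t}{s}\,\Omega_{0i}\int(v^0)^{-k}|f|\,dv$ with $s=\sqrt{\tau^2+|x+ty|^2}$. The issue you flag as the main obstacle takes two lines:
\begin{itemize}
\item $s\le 2t$ always;
\item if $r\le t/2$, then $s\ge\tau\ge\frac{\sqrt3}{2}t$;
\item if $r\ge t/2$, then $s\ge|x+ty|\ge r-\frac{t}{8}\ge\frac{3}{8}t$.
\end{itemize}
One then applies the one-dimensional Sobolev inequality one coordinate at a time. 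After each derivative, bound by velocity averages of $|\hat Z_Af|$ via Lemma \ref{lem OmegaOmegahat}. This needs only the boosts, never differentiates the coefficients $t/s$, and never takes more than one derivative of an absolute value.
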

\begin{proof}
We basically follow the proof of Theorem 8 of \cite{FJS17}. Let $ ( t , x ) \in \bigcup_{ 1 \leq \tau < T } H_\tau $ be fixed with $ \tau = \sqrt{ t^2 - r^2 } $ and $ r = | x | $, and consider the function defined by
\begin{align*}
\psi ( y ) : = \int_{ \bbr^n } \frac{ 1 }{ ( v^0 )^k } | f | ( \tau , x + t y , v ) \, d v \qquad \mbox{for} \quad | y | \leq \frac18 ,
\end{align*}
where the integrand is written in the pseudo-Cartesian coordinate system. By the one dimensional Sobolev inequality we obtain
\begin{align*}
| \psi ( 0 ) | \lesssim \int_{ | y^1 | \leq \frac{ 1 }{ 8 \sqrt{ n } } } \left( | \psi | ( y^1 , 0 , \dots , 0 ) + | \partial_{ y^1 } \psi | ( y^1 , 0 , \dots , 0 ) \right) d y^1 .
\end{align*}
Note that the point $ ( \tau , x + t y ) $ in the pseudo-Cartesian coordinates can be written as $ ( s , x + t y ) $ in the usual Cartesian coordinate system, where
\begin{align*}
s = \sqrt{ \tau^2 + | x + t y |^2 } ,
\end{align*}
so that we obtain by \eqref{partialy}
\begin{align*}
\partial_{ y^i } \psi ( y ) & = \partial_{ y^i } \int_\bbrn \frac{ 1 }{ ( v^0 )^k } | f | ( \tau , x + t y , v ) \, d v \\
& = \frac{ t }{ s } \Omega_{ 0 i } \int_\bbrn \frac{ 1 }{ ( v^0 )^k } | f | ( s , x + t y , v ) \, d v .
\end{align*}
Since we have for $ r \leq t / 2 $,
\begin{align}
s = \sqrt{ \tau^2 + | x + t y |^2 } \geq \tau = \sqrt{ t^2 - r^2 } \geq \frac{ \sqrt{ 3 } }{ 2 } t , \label{st1}
\end{align}
and for $ r \geq t / 2 $,
\begin{align}
s = \sqrt{ \tau^2 + | x + t y |^2 } \geq | x + t y | \geq r - t | y | \geq \frac38 t , \label{st2}
\end{align}
we obtain
\begin{align*}
| \partial_{ y^1 } \psi ( y ) | \lesssim \left| \Omega_{ 0 1 } \int_\bbrn \frac{ 1 }{ ( v^0 )^k } | f | ( s , x + t y , v ) \, d v \right| .
\end{align*}
Consequently we obtain by Lemma \ref{lem OmegaOmegahat}
\begin{align*}
| \psi ( 0 ) | \lesssim \sum_{ | A | \leq 1 } \int_{ | y^1 | \leq \frac{ 1 }{ 8 \sqrt{ n } } } \int_\bbrn \frac{ 1 }{ ( v^0 )^k } | \hat{ Z }_A f | \left( s , x + t ( y^1 , 0 , \dots , 0 ) , v \right) d v \, d y^1 .
\end{align*}
Inductively we have
\begin{align*}
| \psi ( 0 ) | \lesssim \sum_{ | A | \leq n } \int_{ | y | \leq \frac18 } \int_\bbrn \frac{ 1 }{ ( v^0 )^k } | \hat{ Z }_A f | ( s , x + t y , v ) \, d v \, d y .
\end{align*}
By the change of variables we obtain
\begin{align*}
| \psi ( 0 ) | & \lesssim \frac{ 1 }{ t^n } \sum_{ | A | \leq n } \int_\bbrn \int_\bbrn \frac{ 1 }{ ( v^0 )^k } | \hat{ Z }_A f | \left( \sqrt{ \tau^2 + | z |^2 } , z , v \right) d v \, d z \\
& \lesssim \frac{ 1 }{ t^{ n - 1 } \tau } \sum_{ | A | \leq n } \int_{ H_\tau } \int_\bbrn \frac{ 1 }{ ( v^0 )^k } | \hat{ Z }_A f | ( \tau , z , v ) \, d v \, d \mu_{ H_\tau } \\
& \lesssim \frac{ 1 }{ t^{ n - 1 + k } \tau^{ 1 - k } } \sum_{ | A | \leq n } \int_{ H_\tau } \hat{ e } ( | \hat{ Z }_A f | ) \, d \mu_{ H_\tau } ,
\end{align*}
where we used \eqref{dmuH_tau} and Lemma \ref{lem chiintf}. Note that the point $ ( \tau , x ) $ in the pseudo-Cartesian coordinates is written as $ ( t , x ) $ in the usual Cartesian coordinates. Hence we have
\begin{align*}
\psi ( 0 ) = \int_\bbrn \frac{ 1 }{ ( v^0 )^k } | f | ( t , x , v ) \, d v ,
\end{align*}
and this completes the proof of the first inequality. The second inequality is derived from the first inequality with $ f $ replaced by $ v^0 f $ with $ k = 1 $.
\end{proof}


%
\begin{proposition} \label{prop KSphi}
Let $ \phi $ be a regular function defined in $ \bigcup_{ 1 \leq \tau < T } H_\tau $ for some $ T \in ( 1 , \infty ] $. Then we have
\begin{align}
| \phi | ( t , x ) & \lesssim \frac{ 1 }{ t^\frac{ n }{ 2 } } E_{ \lfloor n / 2 \rfloor +1 }^{ \frac12 } ( \phi ) ( \tau ) , \label{KSphi} \\
| \partial \phi | ( t , x ) & \lesssim \frac{ 1 }{ t^{ \frac{ n }{ 2 } - 1 } \tau } E_{ \lfloor n / 2 \rfloor +1 }^{ \frac12 } ( \phi ) ( \tau ) , \label{KSphiimprove}
\end{align}
for $ t $, $ x $ and $ \tau $ satisfying \eqref{tau}.
\end{proposition}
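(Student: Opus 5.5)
We follow the same strategy as in the proof of Proposition \ref{prop KSf}, now with an $L^2$-based Sobolev inequality in place of the $L^1$ one. Fix $(t,x)$ with $\tau=\sqrt{t^2-r^2}$ and set
\begin{align*}
\psi(y):=\phi(\tau,x+ty)\qquad\mbox{for}\quad |y|\le\frac18,
\end{align*}
written in pseudo-Cartesian coordinates. We apply the standard Sobolev embedding $H^{\lfloor n/2\rfloor+1}(B_{1/8})\hookrightarrow L^\infty(B_{1/8})$ on the ball of radius $1/8$ in $\bbr^n$:
\begin{align*}
|\psi(0)|^2\lesssim\sum_{|\alpha|\le\lfloor n/2\rfloor+1}\int_{|y|\le\frac18}|\partial_y^\alpha\psi|^2\,dy .
\end{align*}
By \eqref{partialy}, $\partial_{y^i}\psi(y)=\frac{t}{s}(\Omega_{0i}\phi)(s,x+ty)$ with $s=\sqrt{\tau^2+|x+ty|^2}$. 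By \eqref{st1}--\eqref{st2} we have $s\gtrsim t$, and clearly $s\lesssim t$. Iterating, derivatives of the factors $t/s$ are harmless: $\partial_y(t/s)=-t^2(x+ty)\cdot/s^3$ is bounded by $t^2\cdot s/s^3\lesssim1$, and higher derivatives are similar. Hence
\begin{align*}
|\partial_y^\alpha\psi|\lesssim\sum_{|A|\le|\alpha|}|Z_A\phi|(s,x+ty).
\end{align*}
Changing variables $z=x+ty$ (so $dy=t^{-n}dz$) and using \eqref{dmuH_tau}, $dz=\frac{s}{\tau}\,d\mu_{H_\tau}$, we get
\begin{align*}
|\phi(t,x)|^2\lesssim\frac1{t^n}\sum_{|A|\le\lfloor n/2\rfloor+1}\int_{H_\tau}\frac{s}{\tau}|Z_A\phi|^2\,d\mu_{H_\tau}\lesssim\frac1{t^n}E_{\lfloor n/2\rfloor+1}(\phi)(\tau).
\end{align*}
The last step is the weighted control $\frac{s}{2\tau}(Z_A\phi)^2\le e(Z_A\phi)$ from \eqref{chiphiphi}, evaluated at the point $(s,z)$, where the weight $t/\tau$ becomes $s/\tau$. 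This gives \eqref{KSphi}. The key point is that the mass term supplies the good weight $t/\tau$ on $\phi^2$, exactly matching the Jacobian factor.

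For \eqref{KSphiimprove} we run the same argument with $\psi$ replaced by $\partial_\mu\phi$, using that $\partial_\mu$ commutes with $Z_A$ up to combinations of $\partial$'s, since $[\bbp,\partial]\subset\mathrm{span}\,\partial$. This gives
\begin{align*}
|\partial\phi(t,x)|^2\lesssim\frac1{t^n}\sum_{|A|\le\lfloor n/2\rfloor+1}\int_{H_\tau}\frac{s}{\tau}|\partial Z_A\phi|^2\,d\mu_{H_\tau}.
\end{align*}
Now \eqref{chiphiphi} only provides $\frac{\tau}{2(s+|z|)}|\partial Z_A\phi|^2\le e(Z_A\phi)$. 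The main obstacle is therefore converting the weight $s/\tau$ into $\tau/s$, which costs a factor $s^2/\tau^2\lesssim t^2/\tau^2$ after noting $s\simeq t$. This yields
\begin{align*}
|\partial\phi(t,x)|^2\lesssim\frac{t^2}{t^n\tau^2}E_{\lfloor n/2\rfloor+1}(\phi)(\tau),
\end{align*}
which is exactly \eqref{KSphiimprove}. The one point to check carefully is that $s\simeq t$ holds on the whole ball $|y|\le 1/8$, including the upper bound $s\le\tau+|x|+t/8\lesssim t$; this justifies replacing the local weights $s$ by $t$ uniformly.
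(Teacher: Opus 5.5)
Your proof is correct and follows the paper's own argument. The paper likewise uses the local Sobolev embedding for $\psi(y)=\phi(\tau,x+ty)$, the identity $\partial_{y^i}=\frac{t}{s}\Omega_{0i}$ with $s\simeq t$, the change of variables to $H_\tau$, and \eqref{chiphiphi} applied to $Z_A\phi$ and to $\partial Z_A\phi$. Your handling of the pointwise weight $s/\tau$ is more careful than the paper's write-up: you restrict to the image of the ball before trading $s/\tau$ for $\tau/s$ in the derivative estimate, and you track the derivatives of the factors $t/s$.
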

\begin{proof}
We first note that by \eqref{chiphiphi}
\begin{align}
E_m ( \phi ) ( \tau ) \geq \frac{ t }{ 2 \tau } \sum_{ | A | \leq m } \int_{ H_\tau } | Z_A \phi |^2 d \mu_{ H_\tau } . \label{EphiZphi}
\end{align}
On the other hand, let $ ( t , x ) $ be fixed, and consider
\begin{align*}
\psi ( y ) : = \phi ( \tau , x + t y ) ,
\end{align*}
where we used the pseudo-Cartesian coordinate system for $ \phi $. For simplicity, we write 
\begin{align*}
s = \sqrt{ \tau^2 + | x + t y |^2 } ,
\end{align*}
as in Proposition \ref{prop KSf}, and obtain by \eqref{partialy} and the Sobolev inequality
\begin{align*}
| \psi ( 0 ) |^2 & \lesssim \sum_{ k \leq \lfloor n / 2 \rfloor +1 } \int_{ | y | \leq \delta } | \partial_y^k \psi ( y ) |^2 d y \\
& \lesssim \sum_{ | A | \leq \lfloor n / 2 \rfloor +1 } \int_{ | y | \leq \delta } \frac{ t^{ 2 k } }{ s^{ 2 k } } \left| Z_A \phi ( s , x + t y ) \right|^2 d y .
\end{align*}
In a similar way to \eqref{st1}--\eqref{st2}, we have $ t \lesssim s $, and since $ \psi ( 0 ) = \phi ( t , x ) $, we obtain
\begin{align*}
| \phi ( t , x ) |^2 & \lesssim \sum_{ | A | \leq \lfloor n / 2 \rfloor + 1 } \int_{ | y | \leq \delta } \left| Z_A \phi ( s , x + t y ) \right|^2 d y \\
& \lesssim \frac{ 1 }{ t^n } \sum_{ | A | \leq \lfloor n / 2 \rfloor + 1 } \int_{ \bbr^n } \left| Z_A \phi \left( \sqrt{ \tau^2 + | z |^2 } , z \right) \right|^2 d z \\
& \lesssim \frac{ 1 }{ t^{ n - 1 } \tau } \sum_{ | A | \leq \lfloor n / 2 \rfloor +1 } \int_{ H_\tau } | Z_A \phi |^2 d \mu_{ H_\tau } \\
& \lesssim \frac{ 1 }{ t^n } E_{ \lfloor n / 2 \rfloor + 1 } ( \phi ) ,
\end{align*}
where we used \eqref{dmuH_tau} and \eqref{EphiZphi}. If we consider $ \psi = \partial \phi $ and use the inequality for $ \partial \phi $ in \eqref{chiphiphi}, then we obtain the second result. 
\end{proof}

\section{Commuted equations} \label{sec commutation}
We need to estimate higher order derivatives of the distribution function and the Klein--Gordon field in order to control the energies \eqref{fnorm}, \eqref{fnormw} and \eqref{phinorm}. Recall that the Vlasov equation can be written as
\begin{align*}
T_\phi f = 0 .
\end{align*}
Then, the equation for $ \hat{ Z }_A f $ with $ | A | \geq 1 $ will be given in the form of
\begin{align*}
T_\phi \hat{ Z }_A f = g ,
\end{align*}
where we need commutation formulas for $ T_\phi $ with $ \hat{ Z }_A $ to find the right hand side. The Klein--Gordon equation can be written as
\begin{align*}
( \Box - 1 ) \phi & = \int_\bbrn f \, d v .
\end{align*}
It is well-known that the operator $ \Box - 1 $ commutes with $ Z_A $ (for instance see \cite{K85, K93}), but we already have an inhomogeneous term on the right hand side, so that we will obtain
\begin{align*}
( \Box - 1 ) Z_A \phi = h ,
\end{align*}
where we need to use Lemma \ref{lem OmegaOmegahat} to find the right hand side.

In this section we will derive the equations for $ \hat{ Z }_A f $ and $ Z_A \phi $. In Section \ref{sec commutationf} we will consider the commutation formulas for the transport operator $ T_\phi $ with the vector fields $ \hat{ Z }_A $ for $ | A | \geq 1 $ to derive the equation for $ \hat{ Z }_A f $. In Section \ref{sec commutationphi} we apply Lemma \ref{lem OmegaOmegahat} to obtain the equation for $ Z_A \phi $.

\subsection{Vlasov equation} \label{sec commutationf}
We need to consider the commutation formulas for the transport operator $ T_\phi $ with $ \hat{ Z }_A $ in order to derive the equation for $ \hat{ Z }_A f $. Let us first consider the commutation formulas for the free transport operator $ T $ with single vector fields $ \hat{ Z }_a \in \hat{ \bbp } $. One can easily obtain the following lemma.

\begin{lemma} \label{lem basic}
The following commutation rules hold:
\begin{enumerate}
\item For any $ \hat{ Z }_a \in \hat{ \bbp } $, the free transport operator $ T $ satisfies
\begin{align*}
\left[ T , \hat{ Z }_a \right] = 0 .
\end{align*}
\item For any $ \hat{ Z }_a , \hat{ Z }_b \in \hat{ \bbp } $, there exist constant coefficients $ C_{ a b }^c $ such that
\begin{align*}
\left[ \hat{ Z }_a , \hat{ Z }_b \right] = \sum_{ \hat{ Z }_c \in \hat{ \bbp } } C_{ a b }^c \hat{ Z }_c .
\end{align*}
\item Concerning the derivatives $ \partial_{ v^i } $ we have
\begin{align*}
& \left[ \hat{ \partial }_t , \partial_{ v^i } \right] = 0 , \\
& \left[ \hat{ \partial }_{ x^i } , \partial_{ v^j } \right] = 0 ,\\
& \left[ \hat{ \Omega }_{ 0 i } , \partial_{ v^j } \right] = - \frac{ v^j }{ v^0 } \partial_{ v^i } , \\
& \left[ \hat{ \Omega }_{ i j } , \partial_{ v^k } \right] = - \delta^i_k \partial_{ v^j } + \delta^j_k \partial_{ v^i } , \\
& \left[ \hat{ \Omega }_{ 0 i } , v^j \partial_{ v^j } \right] = \frac{ 1 }{ v^0 } \partial_{ v^i } , \\
& \left[ \hat{ \Omega }_{ i j } , v^k \partial_{ v^k } \right] = 0 ,
\end{align*}
for any $ i , j , k = 1 , \dots , n $.
\end{enumerate}
\end{lemma}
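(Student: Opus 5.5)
Each identity is a direct computation on functions of $(t,x,v)$, and I will organize it as follows.

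For item 1 I will write $\hat{Z}_a = Z_a + V_a$, where $Z_a$ acts only in $(t,x)$ and $V_a$ is the velocity part. So $V_a=0$ for translations, $V_a = v^0\partial_{v^i}$ for $\hat\Omega_{0i}$, and $V_a = v^i\partial_{v^j}-v^j\partial_{v^i}$ for $\hat\Omega_{ij}$. Since $T = v^0\partial_t + v^k\partial_{x^k}$ has constant coefficients in $(t,x)$, translations commute with it trivially. For $\hat\Omega_{0i}$ I will compute $[T,\Omega_{0i}]$ and $[T, v^0\partial_{v^i}]$ separately:
\[
[T,\Omega_{0i}] = v^0\partial_{x^i} + v^i\partial_t, \qquad [v^0\partial_{v^i}, T] = v^0\bigl(\partial_{v^i}v^0\bigr)\partial_t + v^0\partial_{x^i} = v^i\partial_t + v^0\partial_{x^i},
\]
using $\partial_{v^i}v^0 = v^i/v^0$. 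These two terms cancel, so $[T,\hat\Omega_{0i}]=0$. The rotations work the same way: $[T,\Omega_{ij}] = v^i\partial_{x^j}-v^j\partial_{x^i}$ is cancelled by $[v^i\partial_{v^j}-v^j\partial_{v^i},T]$, because $v^0$ is rotation invariant.

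For item 2 I will use that complete lift is a Lie algebra homomorphism. The fields in $\bbp$ span the Poincaré algebra, so their brackets are constant-coefficient combinations of elements of $\bbp$. I can either verify the $v$-parts directly or cite the Appendix of \cite{FJS17}. The direct check amounts to two facts: the brackets of the $V_a$ reproduce the Lorentz algebra acting on the mass shell, and the mixed brackets $[Z_a,V_b]$ vanish because the coefficients are separated in $(t,x)$ and $v$. For example, $[\hat\Omega_{0i},\hat\Omega_{0j}] = \Omega_{ij}$-type terms plus $[v^0\partial_{v^i}, v^0\partial_{v^j}] = v^j\partial_{v^i}-v^i\partial_{v^j}$, and the two pieces combine into $\pm\hat\Omega_{ij}$.

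For item 3 the first two brackets are trivial. The remaining ones follow by differentiating coefficients. For $[v^0\partial_{v^i},\partial_{v^j}]$ we get $-(\partial_{v^j}v^0)\partial_{v^i} = -\frac{v^j}{v^0}\partial_{v^i}$. For $[v^i\partial_{v^j}-v^j\partial_{v^i},\partial_{v^k}]$ we get $-\delta^i_k\partial_{v^j}+\delta^j_k\partial_{v^i}$. For the radial field $v^j\partial_{v^j}$ (summed) I will compute
\[
[v^0\partial_{v^i}, v^j\partial_{v^j}] = v^0\partial_{v^i} - \frac{|v|^2}{v^0}\partial_{v^i} = \frac{1}{v^0}\partial_{v^i},
\]
and $[\hat\Omega_{ij},v^k\partial_{v^k}]=0$ because the radial field commutes with rotations. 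The only real care point is sign conventions and the summation in $v^j\partial_{v^j}$; none of the steps is a genuine obstacle.
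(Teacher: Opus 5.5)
Your proposal is correct and takes the same approach as the paper, which simply states that the identities follow by direct computation and refers to the literature; your explicit checks of items 1 and 3 are right, including the summed Euler field $v^j\partial_{v^j}$. There is one small sign slip in item 2: in fact $[v^0\partial_{v^i}, v^0\partial_{v^j}] = v^i\partial_{v^j} - v^j\partial_{v^i}$, and this sign is exactly what gives $[\hat\Omega_{0i},\hat\Omega_{0j}] = \Omega_{ij} + v^i\partial_{v^j} - v^j\partial_{v^i} = \hat\Omega_{ij}$, whereas with your sign the two pieces would not combine into a multiple of $\hat\Omega_{ij}$.
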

\begin{proof}
The lemma is proved by direct computations. We refer to \cite{FJS17, S16} for more details.
\end{proof}

It is more involved to derive commutation formulas for the perturbed transport operator $ T_\phi $ with $ \hat{ Z }_A $ of order $ | A | \geq 1 $. For the simplicity of notations let us denote by $ \cP $ and $ \cQ $ the sets of polynomials defined by
\begin{align*}
\cP & = \mbox{the set of polynomials of $ v^1 / v^0 $, $ v^2 / v^0 $ and $ v^3 / v^0 $,} \\
\cQ & = \mbox{the set of polynomials in the form of $ P_0 + t P_1 + \sum_{ k = 1 }^n x^k P_{ k + 1 } $ with $ P_0 , \dots , P_{ n + 1 } \in \cP $.}
\end{align*}
In other words, $ \cP $ is the set of polynomials of $ v^j / v^0 $ with constant coefficients, while $ \cQ $ is the set of first order polynomials of $ t $ and $ x^j $ with coefficients in $ \cP $.

\begin{lemma} \label{lem TphiZhat1}
For any $ \hat{ Z }_a \in \hat{ \bbp } $, we have
\begin{align}
\left[ T_\phi , \hat{ Z }_a \right] = v^0 \sum_{ | B | \leq 1 } \sum_{ \substack{ 0 \leq \mu \leq n \\ 1 \leq i \leq n } } P^{ \mu B i }_{ a } \partial_{ x^\mu } ( Z_B \phi ) \partial_{ v^i } , \label{TphiZhat1}
\end{align}
for some $ P^{ \mu B i }_{ a } \in \cP $. 
\end{lemma}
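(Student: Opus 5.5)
Since $T_\phi = T - v^0 \nabla_x\phi\cdot\nabla_v$ and $[T,\hat Z_a]=0$ by Lemma \ref{lem basic}, we have
\begin{align*}
[T_\phi,\hat Z_a] = -\left[ v^0 \partial_{x^j}\phi \, \partial_{v^j}, \hat Z_a\right] = \hat Z_a\big(v^0\partial_{x^j}\phi\big)\partial_{v^j} + v^0 \partial_{x^j}\phi\, [\hat Z_a,\partial_{v^j}] ,
\end{align*}
with summation over $j$. The plan is to compute these two pieces for each type of $\hat Z_a$ and verify that every resulting term has the form $v^0 P\,\partial_{x^\mu}(Z_B\phi)\partial_{v^i}$ with $P\in\cP$ and $|B|\le 1$.

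For the translations $\hat\partial_{x^\mu}$, the second bracket vanishes by Lemma \ref{lem basic}. The first piece is $v^0\partial_{x^j}(\partial_{x^\mu}\phi)\partial_{v^j} = v^0 \partial_{x^j}(Z_B\phi)\partial_{v^j}$ with $Z_B=\partial_{x^\mu}$, which is of the required form with $P=1$.

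For the rotations $\hat\Omega_{kl}$, note that $\hat\Omega_{kl}v^0=0$. We also use $\Omega_{kl}\partial_{x^j}\phi = \partial_{x^j}\Omega_{kl}\phi + [\Omega_{kl},\partial_{x^j}]\phi$, where $[\Omega_{kl},\partial_{x^j}]$ is a constant combination of $\partial_{x^k},\partial_{x^l}$. The bracket $[\hat\Omega_{kl},\partial_{v^j}]$ is given explicitly in Lemma \ref{lem basic} and contributes terms like $v^0\partial_{x^k}\phi\,\partial_{v^l}$. All of these terms have constant coefficients, so they lie in $\cP$ times $v^0$.

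For the boosts $\hat\Omega_{0k}$, we have $\hat\Omega_{0k}v^0 = v^0\partial_{v^k}v^0 = v^k$, so one term is $v^k\partial_{x^j}\phi\,\partial_{v^j} = v^0 (v^k/v^0)\partial_{x^j}\phi\,\partial_{v^j}$. Next, $\Omega_{0k}\partial_{x^j}\phi = \partial_{x^j}\Omega_{0k}\phi - \delta_{jk}\partial_t\phi$, which gives $\partial_{x^\mu}(Z_B\phi)$ terms with $\mu=0$ allowed. Finally, Lemma \ref{lem basic} gives $[\hat\Omega_{0k},\partial_{v^j}] = -(v^j/v^0)\partial_{v^k}$, producing $-v^0 (v^j/v^0)\partial_{x^j}\phi\,\partial_{v^k}$. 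Every coefficient here is a polynomial in $v^i/v^0$.

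The only real point of care is the boost case. There the $v^0$ weights must be tracked so that each coefficient factors exactly as $v^0$ times an element of $\cP$. In particular, the derivative of $v^0$ has to produce $v^k = v^0\cdot(v^k/v^0)$ rather than a bare $v^0$ or a factor of $1/v^0$. The case $\hat\partial_t$ is immediate, since $\partial_t$ commutes with both $v^0$ and $\partial_{v^j}$ and gives $v^0\partial_{x^j}(\partial_t\phi)\partial_{v^j}$. Collecting all cases proves \eqref{TphiZhat1}.
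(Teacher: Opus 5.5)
Your proposal is correct and follows essentially the same route as the paper: use Lemma \ref{lem basic} to reduce to $[-v^0\nabla_x\phi\cdot\nabla_v,\hat Z_a]$, then compute case by case, and your boost computation reproduces exactly the four terms of \eqref{TphiZhat14}. In the rotation case the extra terms coming from $[\Omega_{kl},\partial_{x^j}]$ and from $[\hat\Omega_{kl},\partial_{v^j}]$ actually cancel, which gives the paper's cleaner form \eqref{TphiZhat13}; as you note, this cancellation is not needed for the claimed structure.
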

\begin{proof}
Recall that $ T_\phi = T - v^0 \nabla_x \phi \cdot \nabla_v $. By Lemma \ref{lem basic} we only need to consider
\begin{align*}
\left[ T_\phi , \hat{ Z }_a \right] = \left[ - v^0 \nabla_x \phi \cdot \nabla_v , \hat{ Z }_a \right] .
\end{align*}
Since $ \hat{ \partial }_t = \partial_t $ and $ \hat{ \partial }_{ x^i } = \partial_{ x^i } $, we easily obtain
\begin{align}
& \left[ - v^0 \nabla_x \phi \cdot \nabla_v , \hat{ \partial }_t \right] = v^0 \nabla_x ( \partial_t \phi ) \cdot \nabla_v , \label{TphiZhat11} \\
& \left[ - v^0 \nabla_x \phi \cdot \nabla_v , \hat{ \partial }_{ x^i } \right] = v^0 \nabla_x ( \partial_{ x^i } \phi ) \cdot \nabla_v . \label{TphiZhat12}
\end{align}
For $ \hat{ Z }_a = \hat{ \Omega }_{ i j } $, we compute as follows:
\begin{align}
& \left[ - v^0 \nabla_x \phi \cdot \nabla_v , \hat{ \Omega }_{ i j } \right] \nonumber \\
& = \left[ - v^0 \nabla_x \phi \cdot \nabla_v , \Omega_{ i j } \right] + \left[ - v^0 \nabla_x \phi \cdot \nabla_v , v^i \partial_{ v^j } - v^j \partial_{ v^i } \right] \nonumber \\
& = v^0 \Omega_{ i j } ( \nabla_x \phi ) \cdot \nabla_v - v^0 \nabla_x \phi \cdot \left( ( \nabla_v v^i ) \partial_{ v^j } - ( \nabla_v v^j ) \partial_{ v^i } \right) + \left( v^i \frac{ v^j }{ v^0 } - v^j \frac{ v^i }{ v^0 } \right) \nabla_x \phi \cdot \nabla_v \nonumber \\
& = v^0 \nabla_x ( \Omega_{ i j } \phi ) \cdot \nabla_v . \label{TphiZhat13}
\end{align}
Similarly, for $ \hat{ Z }_a = \hat{ \Omega }_{ 0 i } $, we have
\begin{align}
& \left[ - v^0 \nabla_x \phi \cdot \nabla_v , \hat{ \Omega }_{ 0 i } \right] \nonumber \\
& = v^0 \nabla_x ( \Omega_{ 0 i } \phi ) \cdot \nabla_v - v^0 ( \partial_t \phi ) \partial_{ v^i } - ( v \cdot \nabla_x \phi ) \partial_{ v^i } + v^i \nabla_x \phi \cdot \nabla_v . \label{TphiZhat14}
\end{align}
All the quantities \eqref{TphiZhat11}--\eqref{TphiZhat14} can be written as in \eqref{TphiZhat1} for some polynomials $ P^{ \mu B i }_{ a } \in \cP $. This completes the proof of the lemma.
\end{proof}

\begin{lemma} \label{lem TphiZhat2}
For any $ \hat{ Z }_a \in \hat{ \bbp } $, we have
\begin{align*}
\left[ T_\phi , \hat{ Z }_a \right] = \sum_{ \substack{ | B | \leq 1 \\ | C | = 1 } } \sum_{ 0 \leq \mu \leq n } Q^{ \mu B C }_{ a } \partial_{ x^\mu } ( Z_B \phi ) \hat{ Z }_C ,
\end{align*}
for some $ Q^{ \mu B C }_a \in \cQ $.
\end{lemma}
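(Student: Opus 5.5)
Starting from Lemma \ref{lem TphiZhat1}, it suffices to rewrite each term $ v^0 P \, \partial_{ x^\mu } ( Z_B \phi ) \partial_{ v^i } $, with $ P \in \cP $, as a combination of the vector fields $ \hat{ Z }_C \in \hat{ \bbp } $ with coefficients in $ \cQ $. The whole proof reduces to an algebraic identity expressing $ v^0 \partial_{ v^i } $ through the lifted vector fields, with no analytic content.

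\textbf{The identity for $ v^0 \partial_{ v^i } $.} From the definition of the complete lift of $ \Omega_{ 0 i } $ we have
\begin{align*}
v^0 \partial_{ v^i } = \hat{ \Omega }_{ 0 i } - \Omega_{ 0 i } = \hat{ \Omega }_{ 0 i } - t \partial_{ x^i } - x^i \partial_t = \hat{ \Omega }_{ 0 i } - t \hat{ \partial }_{ x^i } - x^i \hat{ \partial }_t ,
\end{align*}
using $ \hat{ \partial }_t = \partial_t $ and $ \hat{ \partial }_{ x^i } = \partial_{ x^i } $. The right hand side is a sum of members of $ \hat{ \bbp } $ with coefficients $ 1 $, $ - t $ and $ - x^i $. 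Each of these is a first order polynomial of $ t $ and $ x^k $ with constant coefficients, hence an element of $ \cQ $ (constants lie in $ \cP $).

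\textbf{Substitution.} Substituting this into \eqref{TphiZhat1} gives
\begin{align*}
\left[ T_\phi , \hat{ Z }_a \right] = \sum_{ | B | \leq 1 } \sum_{ \mu , i } P^{ \mu B i }_a \partial_{ x^\mu } ( Z_B \phi ) \left( \hat{ \Omega }_{ 0 i } - t \hat{ \partial }_{ x^i } - x^i \hat{ \partial }_t \right) .
\end{align*}
Collecting terms by the vector field $ \hat{ Z }_C $ with $ | C | = 1 $ and by $ ( \mu , B ) $, the coefficient of $ \partial_{ x^\mu } ( Z_B \phi ) \hat{ Z }_C $ is a finite sum of products of $ P^{ \mu B i }_a \in \cP $ with $ 1 $, $ t $ or $ x^i $. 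This is of the form $ P_0 + t P_1 + \sum_k x^k P_{ k + 1 } $ with all $ P_j \in \cP $, so it lies in $ \cQ $, since $ \cQ $ is closed under sums and under multiplication of $ \cP $-coefficients by the constants $ 1 $, $ t $, $ x^k $. Vector fields $ \hat{ Z }_C $ that do not appear receive coefficient zero.

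\textbf{Main point.} The only step needing care is that the coefficients remain first order in $ ( t , x ) $. This holds because each term of \eqref{TphiZhat1} carries exactly one factor $ v^0 \partial_{ v^i } $, so only one factor of $ t $ or $ x^i $ is introduced per term, and the $ \cP $ coefficients do not depend on $ ( t , x ) $. No commutators need to be reordered, because Lemma \ref{lem TphiZhat1} already places $ \partial_{ v^i } $ to the right of all coefficients.
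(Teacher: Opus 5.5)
Your proposal is correct and follows the paper's proof. The paper substitutes the decomposition $\partial_{v^i} = \frac{1}{v^0}\hat{\Omega}_{0i} - \frac{1}{v^0}(t\partial_{x^i} + x^i\partial_t)$, which is your identity for $v^0\partial_{v^i}$, into Lemma \ref{lem TphiZhat1} and reads off coefficients in $\cQ$; your remark that each term gains at most one factor of $t$ or $x^i$ is what keeps those coefficients first order.
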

\begin{proof}
Concerning the derivatives $ \partial_{ v^i } $ we can use the following decomposition:
\begin{align}
\partial_{ v^i } = \frac{ 1 }{ v^0 } \hat{ \Omega }_{ 0 i } - \frac{ 1 }{ v^0 } ( t \partial_{ x^i } + x^i \partial_t ) . \label{partialvOmegahat}
\end{align}
Note that $ \hat{ \Omega }_{ 0 i } $, $ \partial_t $ and $ \partial_{ x^i } $ belong to $ \hat{ \bbp } $. Now, we apply \eqref{partialvOmegahat} to Lemma \ref{lem TphiZhat1} to obtain the desired result.
\end{proof}

\begin{lemma} \label{lem TphiZhat3}
For any $ \hat{ Z }_A \in \hat{ \bbp }^{ | A | } $ with $ | A | \geq 1 $, we have
\begin{align*}
\left[ T_\phi , \hat{ Z }_A \right] = \sum_{ \substack{ | B | + | C | \leq | A | + 1 \\ 1 \leq | C | \leq | A | } } \sum_{ 0 \leq \mu \leq n } Q^{ \mu B C }_{ A } \partial_{ x^\mu } ( Z_B \phi ) \hat{ Z }_C ,
\end{align*}
for some $ Q^{ \mu B C }_{ A } \in \cQ $.
\end{lemma}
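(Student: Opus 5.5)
We argue by induction on $|A|$, with Lemma \ref{lem TphiZhat2} as the base case $|A|=1$: there $|B|\le 1$ and $|C|=1$, so $|B|+|C|\le 2=|A|+1$ and $1\le|C|\le|A|$.

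For the inductive step, write $\hat Z_A=\hat Z_a\hat Z_{A'}$ with $|A'|=|A|-1\ge1$, and use the Leibniz rule for commutators:
\begin{align*}
\left[T_\phi,\hat Z_A\right]=\left[T_\phi,\hat Z_a\right]\hat Z_{A'}+\hat Z_a\left[T_\phi,\hat Z_{A'}\right].
\end{align*}

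The first term, by Lemma \ref{lem TphiZhat2}, is a sum of terms $Q\,\partial_{x^\mu}(Z_B\phi)\hat Z_C\hat Z_{A'}$ with $|B|\le1$ and $|C|=1$. Here $\hat Z_C\hat Z_{A'}$ is a single element of $\hat\bbp^{|A|}$, and $|B|+|A|\le|A|+1$, so this term already has the required form.

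The second term, by the induction hypothesis, is a sum of terms $\hat Z_a\bigl(Q\,\partial_{x^\mu}(Z_B\phi)\hat Z_C\bigr)$ with $|B|+|C|\le|A|$ and $1\le|C|\le|A|-1$. Expanding by the product rule gives three kinds of terms.
\begin{itemize}
\item[(i)] $Q\,\partial_{x^\mu}(Z_B\phi)\,\hat Z_a\hat Z_C$. This is fine: $|C|+1\le|A|$ and $|B|+|C|+1\le|A|+1$.
\item[(ii)] $Q\,\hat Z_a\bigl(\partial_{x^\mu}Z_B\phi\bigr)\hat Z_C$. Since $\hat Z_a$ does not depend on $v$ when acting on functions of $(t,x)$, we have $\hat Z_a(\partial_{x^\mu}Z_B\phi)=Z_a\partial_{x^\mu}Z_B\phi$. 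We then use $[Z_a,\partial_{x^\mu}]=\sum_\nu c^\nu_{a\mu}\partial_{x^\nu}$ with constants $c^\nu_{a\mu}$ (true for the Killing fields: for instance $[\partial_{x^i},\Omega_{0i}]=\partial_t$). This rewrites the term as a sum of $\partial_{x^\nu}(Z_{B'}\phi)$ with $|B'|\le|B|+1$, and the total order stays at most $|A|+1$.
\item[(iii)] $(\hat Z_aQ)\,\partial_{x^\mu}(Z_B\phi)\hat Z_C$. Here we must check that $\hat Z_a\cQ\subset\cQ$; this is the main computation.
\end{itemize}

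For (iii), first consider $\hat Z_a$ acting on $\cP$. The translations annihilate $\cP$. For the others one computes
\begin{align*}
\hat\Omega_{0i}\left(\frac{v^j}{v^0}\right)=v^0\partial_{v^i}\left(\frac{v^j}{v^0}\right)=\delta^j_i-\frac{v^iv^j}{(v^0)^2}\in\cP,
\end{align*}
and $\hat\Omega_{ij}$ maps $v^k/v^0$ to a linear combination of such quotients. So $\cP$ is preserved, since each $\hat Z_a$ is a derivation.

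Next consider $\hat Z_a$ acting on $t$ and $x^k$. The results $\partial t=1$ or $0$, $\Omega_{0i}t=x^i$, $\Omega_{0i}x^k=t\delta^k_i$ and $\Omega_{ij}x^k\in\{x^i,x^j,0\}$ are constants or degree-one monomials in $(t,x)$. Hence $\hat Z_a(tP_1)=(\hat Z_at)P_1+t\hat Z_aP_1\in\cQ$, and similarly for $x^kP_{k+1}$. So $\hat Z_aQ\in\cQ$, and terms of type (iii) keep $|B|+|C|\le|A|$.

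Collecting all terms and relabeling gives the claimed formula with coefficients in $\cQ$. The main points to check carefully are the closure of $\cQ$ under $\hat\bbp$, and the fact that the commutators $[Z_a,\partial_{x^\mu}]$ produce only first-order Cartesian derivatives with constant coefficients, so that the index bound $|B|+|C|\le|A|+1$ is maintained.
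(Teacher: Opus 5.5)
Your proof is correct and follows the paper's argument essentially step for step. It uses induction from Lemma \ref{lem TphiZhat2}, the Leibniz rule $[T_\phi,\hat Z_a\hat Z_{A'}]=[T_\phi,\hat Z_a]\hat Z_{A'}+\hat Z_a[T_\phi,\hat Z_{A'}]$, closure of $\cQ$ under $\hat{\bbp}$, and the constant-coefficient commutators $[Z_a,\partial_{x^\mu}]$; your check of $\hat Z_a\cQ\subset\cQ$, including the action on $t$ and $x^k$, is slightly more explicit than the paper's.
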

\begin{proof}
The lemma holds in the case $ | A | = 1 $ by Lemma \ref{lem TphiZhat2}. Suppose that the lemma holds for some $ | A | = k $, and compute
\begin{align*}
& \left[ T_\phi , \hat{ Z }_a \hat{ Z }_A \right] \\
& = \left[ T_\phi , \hat{ Z }_a \right] \hat{ Z }_A + \hat{ Z }_a \left[ T_\phi , \hat{ Z }_A \right] \\
& = \left( \sum_{ \substack{ | B | \leq 1 \\ | C | = 1 } } \sum_{ 0 \leq \mu \leq n } Q^{ \mu B C }_{ a } \partial_{ x^\mu } ( Z_B \phi ) \hat{ Z }_C \right) \hat{ Z }_A + \hat{ Z }_a \left( \sum_{ \substack{ | B | + | C | \leq | A | + 1 \\ 1 \leq | C | \leq | A | } } \sum_{ 0 \leq \mu \leq n } Q^{ \mu B C }_{ A } \partial_{ x^\mu } ( Z_B \phi ) \hat{ Z }_C \right) \\
& = \sum_{ \substack{ | B | \leq 1 \\ | C | = | A | + 1 } } \sum_{ 0 \leq \mu \leq n } Q^{ \mu B C }_{ a } \partial_{ x^\mu } ( Z_B \phi ) \hat{ Z }_C + \sum_{ \substack{ | B | + | C | \leq | A | + 1 \\ 1 \leq | C | \leq | A | } } \sum_{ 0 \leq \mu \leq n } \hat{ Z }_a \left( Q^{ \mu B C }_{ A } \partial_{ x^\mu } ( Z_B \phi ) \hat{ Z }_C \right) ,
\end{align*}
where $ Q^{ \mu B C }_{ a } $ and $ Q^{ \mu B C }_{ A } $ are polynomials in $ \cQ $, which may change from line to line. Let us compute the $ \hat{ Z }_a $ derivative in the second term. Note that the polynomial $ Q^{ \mu B C }_{ A } $ can be written as
\begin{align*}
Q^{ \mu B C }_{ A } =  P_0 + t P_1 + \sum_{ k = 1 }^n x^k P_{ k + 1 } ,
\end{align*}
for some $ P_0 , \dots , P_{ n + 1 } \in \cP $. It is easy to see that $ \hat{ Z }_a Q^{ \mu B C }_{ A } \in \cP $ for $ \hat{ Z }_a = \partial_t $ or $ \partial_{ x^j } $. On the other hand, we have
\begin{align*}
\partial_{ v^i } \left( \frac{ v^j }{ v^0 } \right) = \frac{ 1 }{ v^0 } \left( \delta_i^j - \frac{ v^i v^j }{ ( v^0 )^2 } \right) ,
\end{align*}
which shows that $ v^i \partial_{ v^j } P , v^0 \partial_{ v^i } P \in \cP $ for any $ P \in \cP $. Then, we can conclude that $ \hat{ Z }_a Q^{ \mu B C }_{ A } \in \cQ $ for $ \hat{ Z }_a = \hat{ \Omega }_{ i j } $ or $ \hat{ \Omega }_{ 0 i } $. Next, concerning the quantity $ \hat{ Z }_a \partial_{ x^\mu } ( Z_B \phi ) $, we note that
\begin{align*}
\hat{ Z }_a \partial_{ x^\mu } ( Z_B \phi ) & = \partial_{ x^\mu } \hat{ Z }_a ( Z_B \phi ) + \left[ \hat{ Z }_a , \partial_{ x^\mu } \right] ( Z_B \phi ) \\
& = \partial_{ x^\mu } ( Z_a Z_B \phi ) + c^\nu_{ a \mu } \partial_{ x^\nu } ( Z_B \phi ) ,
\end{align*}
for some constants $ c^\nu_{ a \mu } $. Hence, we may write
\begin{align*}
\hat{ Z }_a \partial_{ x^\mu } ( Z_B \phi ) = \sum_{ | B | \leq | C | \leq | B | + 1 } c^{ \nu C }_{ a \mu } \partial_{ x^\nu } ( Z_C \phi ) ,
\end{align*}
for some constants $ c^{ \nu C }_{ a \mu } $. Therefore, we conclude that
\begin{align*}
\sum_{ \substack{ | B | + | C | \leq | A | + 1 \\ 1 \leq | C | \leq | A | } } \sum_{ 0 \leq \mu \leq n } \hat{ Z }_a \left( Q^{ \mu B C }_{ A } \partial_{ x^\mu } ( Z_B \phi ) \hat{ Z }_C \right) & = \sum_{ \substack{ | B | + | C | \leq | A | + 2 \\ 1 \leq | C | \leq | A | +1 } } \sum_{ 0 \leq \mu \leq n } Q^{ \mu B C }_{ a A } \partial_{ x^\mu } ( Z_B \phi ) \hat{ Z }_C ,
\end{align*}
for some $ Q^{ \mu B C }_{ a A } \in \cQ $. This completes the proof of the lemma.
\end{proof}

Now, Lemma \ref{lem TphiZhat3} shows that the evolution equation for $ \hat{ Z }_A f $ with $ | A | \geq 1 $ can be written as follows:
\begin{align}
T_\phi \hat{ Z }_A f = \sum_{ \substack{ | B | + | C | \leq | A | + 1 \\ 1 \leq | C | \leq | A | } } \sum_{ 0 \leq \mu \leq n } Q^{ \mu B C }_{ A } \partial_{ x^\mu } ( Z_B \phi ) \hat{ Z }_C f , \label{TphiZf}
\end{align}
for some $ Q^{ \mu B C }_{ A } \in \cQ $.

\subsection{Klein--Gordon equation} \label{sec commutationphi}
It is well-known that the Klein--Gordon operator $ \Box - 1 $ commutes with all of the members in $ \bbp $. The equation for $ Z_A \phi $ can easily be derived by applying the vector fields to the both sides of the Klein--Gordon equation \eqref{KG}. Concerning the right hand side of \eqref{KG} we apply Lemma \ref{lem OmegaOmegahat} to write the equation in terms of $ Z_A \phi $ and $ \hat{ Z }_B f $.

\begin{lemma} \label{lem Zintf}
For any $ Z_A \in \bbp^{ | A | } $, we have
\begin{align*}
Z_A \int_\bbrn f \, d v = \sum_{ | B | \leq | A | } \int_\bbrn P^B \hat{ Z }_B f \, d v ,
\end{align*}
for some $ P^B \in \cP $.
\end{lemma}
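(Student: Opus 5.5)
We prove the slightly stronger statement that for every $ Z_A \in \bbp^{ | A | } $ and every $ P \in \cP $,
\begin{align*}
Z_A \int_\bbrn P f \, d v = \sum_{ | B | \leq | A | } \int_\bbrn P^B \hat{ Z }_B f \, d v
\end{align*}
for some $ P^B \in \cP $. The lemma is the case $ P = 1 $. The argument is induction on $ | A | $, and the base case $ | A | = 0 $ is trivial.

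The core is the single-vector-field step. Fix $ Z_a \in \bbp $ and $ P \in \cP $, and compute $ Z_a \int P f \, d v $. Since $ P $ depends only on $ v $, we have $ Z_a ( P f ) = P Z_a f $, where $ Z_a $ acts in $ ( t , x ) $ only. We write $ Z_a = \hat{ Z }_a - W_a $, with $ W_a $ the velocity part of the lift:
\begin{itemize}
\item $ W_a = 0 $ for translations;
\item $ W_a = v^i \partial_{ v^j } - v^j \partial_{ v^i } $ for $ \Omega_{ i j } $;
\item $ W_a = v^0 \partial_{ v^i } $ for $ \Omega_{ 0 i } $.
\end{itemize}
Integrating by parts in $ v $ (justified by the regularity and decay assumed for regular distribution functions) gives
\begin{align*}
- \int_\bbrn P W_a f \, d v = \int_\bbrn \mathrm{div}_v ( P \, w_a ) f \, d v ,
\end{align*}
where $ w_a $ is the coefficient vector of $ W_a $. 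We must check that $ \mathrm{div}_v ( P w_a ) \in \cP $.
\begin{itemize}
\item For rotations, $ w_a $ is divergence free, so the term equals $ ( v^i \partial_{ v^j } - v^j \partial_{ v^i } ) P $.
\item For boosts, it equals $ \partial_{ v^i } ( v^0 P ) = \frac{ v^i }{ v^0 } P + v^0 \partial_{ v^i } P $.
\end{itemize}
By the identity $ \partial_{ v^i } ( v^j / v^0 ) = \frac{ 1 }{ v^0 } ( \delta_i^j - v^i v^j / ( v^0 )^2 ) $ from the proof of Lemma \ref{lem TphiZhat3}, both $ v^0 \partial_{ v^i } P $ and $ v^i \partial_{ v^j } P $ lie in $ \cP $. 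This is exactly the stability property used there, and it generalises the computation in Lemma \ref{lem OmegaOmegahat}. We therefore obtain
\begin{align*}
Z_a \int_\bbrn P f \, d v = \int_\bbrn P \hat{ Z }_a f \, d v + \int_\bbrn \tilde{ P } f \, d v , \qquad \tilde{ P } \in \cP .
\end{align*}

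For the inductive step, write $ Z_A = Z_a Z_{ A' } $ and apply the induction hypothesis to $ Z_{ A' } $, which gives $ \sum_{ | B' | \leq | A' | } \int P^{ B' } \hat{ Z }_{ B' } f \, d v $. Then apply the single-field step to each term with $ f $ replaced by $ \hat{ Z }_{ B' } f $, which is still a regular function. This produces terms $ \int P \hat{ Z }_a \hat{ Z }_{ B' } f \, d v $ of order at most $ | A | $, and terms $ \int \tilde{ P } \hat{ Z }_{ B' } f \, d v $ of order at most $ | A | - 1 $. Collecting them gives the claim with coefficients in $ \cP $.

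The only delicate point is confirming that $ \cP $ is closed under $ v^0 \partial_{ v^i } $ and $ v^i \partial_{ v^j } $, and under multiplication by $ v^i / v^0 $. This follows from the displayed derivative identity, with $ \cP $ read as polynomials in all the $ v^j / v^0 $. The boundary terms in the $ v $-integration by parts vanish by the regularity assumption.
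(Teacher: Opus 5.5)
Your proof is correct and follows the paper's argument: the paper iterates Lemma \ref{lem OmegaOmegahat}, using that $\mathcal{P}$ is closed under $v^i\partial_{v^j}$ and $v^0\partial_{v^i}$. Your strengthened induction hypothesis, with an arbitrary weight $P \in \mathcal{P}$ and integration by parts of the velocity part of each lift, is the generalization of Lemma \ref{lem OmegaOmegahat} that this iteration tacitly needs, and reading $\mathcal{P}$ as polynomials in all $v^j/v^0$, $j = 1, \dots, n$, is the right interpretation.
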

\begin{proof}
The lemma is clear for $ | A | = 0 $. Since $ v^i \partial_{ v^j } P^B , v^0 \partial_{ v^i } P^B \in \cP $ as observed in the proof of Lemma \ref{lem TphiZhat3}, we obtain the desired result by iterating Lemma \ref{lem OmegaOmegahat}.
\end{proof}

Now, Lemma \ref{lem Zintf} shows that the equation for $ Z_A \phi $ can be written as
\begin{align}
( \Box - 1 ) Z_A \phi = \sum_{ | B | \leq | A | } \int_{ \bbr^n } P^B \hat{ Z }_B f \, d v , \label{ZKG}
\end{align}
for some $ P^B \in \cP $.

\section{Proof of Theorem \ref{thm}} \label{sec proof}
We are now ready to prove Theorem \ref{thm}. The proof is an application of the bootstrap argument, and we basically follow the arguments of \cite{FJS17}. First, we assume that initial data is given at $ \tau = 1 $ and the energies for the Klein--Gordon field and the distribution function are small:
\begin{align}
E_N ( \phi ) ( 1 ) + \hat{ E }_{ N, 1 } ( f ) ( 1 ) + \hat{ E }_{ N + n } ( f ) ( 1 ) < \varepsilon . \label{smalldata}
\end{align}
Then, we can find an interval $ [ 1 , T ) $, on which the corresponding solution remains small. The bootstrap assumptions for this will be given in Section \ref{sec bootsassumption}. In Section \ref{sec improvephi}, the bootstrap assumption for the Klein--Gordon field will be improved by assuming further that the distribution function satisfies a certain $ L^2 $-estimate. In Section \ref{sec improvef}, the bootstrap assumption for the distribution function will be improved. The $ L^2 $-estimate given in Section \ref{sec improvephi} for the distribution function will be verified in Section \ref{sec L2}, and we obtain the estimates \eqref{thm1}--\eqref{thm3} of Theorem \ref{thm}.

The estimates \eqref{thm41}--\eqref{thm62} of Theorem \ref{thm} are obtained by applying Propositions \ref{prop KSf} and \ref{prop KSphi} to the results \eqref{thm1}--\eqref{thm2}. For instance, we apply \eqref{KSf} of Proposition \ref{prop KSf} to \eqref{thm2} to obtain
\begin{align*}
\int_{ \bbr^n } | \hat{ Z }_A f | \, d v \lesssim \frac{ \varepsilon }{ t^{ n - 1 } \tau^{ 1 - \delta } } ,
\end{align*}
for multi-indices $ A $ satisfying $ | A | \leq N - n $. Then, since $ \tau = \sqrt{ t^2 - | x |^2 } $ and $ t > | x | $ in the future of the unit hyperboloid, we obtain the estimate \eqref{thm41}. The other estimates are obtained in a similar way, and this will complete the proof of Theorem \ref{thm}.

\subsection{Bootstrap assumption} \label{sec bootsassumption}
Suppose that initial data is given as in \eqref{smalldata}, and let $ ( \phi , f ) $ be the corresponding solution to the VKG system \eqref{V}--\eqref{KG}. For suitably regular solutions one can find an interval $ [ 1 , T ) $, on which the solution satisfies
\begin{align}
E_N ( \phi ) ( \tau ) & < 2 \varepsilon , \label{bootsphi} \\
\hat{ E }_{ N , 1 } ( f ) ( \tau ) & < 2 \varepsilon \tau^\delta , \label{bootsf}
\end{align}
where $ \varepsilon > 0 $ is the small number given in \eqref{smalldata}, and $ \delta \geq 0 $ is another small number which will be determined later. Let $ [ 1 , T ) $ be the largest interval on which the estimates \eqref{bootsphi}--\eqref{bootsf} hold.

Below, in Sections \ref{sec improvephi} and \ref{sec improvef}, we will show that the estimates \eqref{bootsphi}--\eqref{bootsf} can be improved. Then, we can conclude that the estimates \eqref{bootsphi}--\eqref{bootsf} hold for all $ 1 \leq \tau < \infty $, and this completes the proof of Theorem \ref{thm}.

\subsection{Improvement for the Klein--Gordon field} \label{sec improvephi}
In this section we improve the estimate \eqref{bootsphi}. In Lemma \ref{lem improvephi}, we assume further that higher order derivatives of the distribution function satisfy the estimate \eqref{L2} and obtain the improvement \eqref{improvephi}.

\begin{lemma} \label{lem improvephi}
Suppose that for $ N - n + 1 \leq | A | \leq N $ the distribution function $ f $ satisfies
\begin{align}
\int_{ H_\tau } \frac{ t }{ \tau } \left( \int_\bbrn | \hat{ Z }_A f | \, d v \right)^2 d \mu_{ H_\tau } \lesssim \varepsilon^2 \tau^{ 2 \delta - n } , \label{L2}
\end{align}
where $ \delta \geq 0 $ is the small number given in the assumption \eqref{bootsf}. Then we have
\begin{align}
E_N ( \phi ) ( \tau ) \leq \frac32 \varepsilon , \label{improvephi}
\end{align}
for $ 1 \leq \tau < T $.
\end{lemma}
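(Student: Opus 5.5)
We apply Lemma \ref{lem chiphiest} to each $ Z_A \phi $ with $ | A | \leq N $, using the commuted equation \eqref{ZKG}. Since $ | P^B | \lesssim 1 $, this gives
\begin{align*}
E_N ( \phi ) ( \tau ) \leq E_N ( \phi ) ( 1 ) + C \sum_{ | A | \leq N } \sum_{ | B | \leq | A | } \int_1^\tau \int_{ H_\lambda } \left( \int_\bbrn | \hat{ Z }_B f | \, d v \right) | \partial_t Z_A \phi | \, d \mu_{ H_\lambda } d \lambda .
\end{align*}
We split the spacetime integrand as $ \big( \frac{ t }{ \lambda } \big)^{ 1/2 } \int | \hat{ Z }_B f | dv \cdot \big( \frac{ \lambda }{ t } \big)^{ 1/2 } | \partial_t Z_A \phi | $ and use Cauchy--Schwarz on $ H_\lambda $. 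By \eqref{chiphiphi}, $ \frac{ \lambda }{ t } | \partial Z_A \phi |^2 \lesssim e ( Z_A \phi ) $ since $ t + r \leq 2 t $. Hence the $ \phi $ factor is bounded by $ E_N^{ 1/2 } ( \phi ) ( \lambda ) \lesssim \varepsilon^{ 1/2 } $ using \eqref{bootsphi}. It therefore suffices to show that
\begin{align*}
\int_{ H_\lambda } \frac{ t }{ \lambda } \left( \int_\bbrn | \hat{ Z }_B f | \, d v \right)^2 d \mu_{ H_\lambda } \lesssim \varepsilon^2 \lambda^{ 2 \delta - n }
\end{align*}
for every $ | B | \leq N $. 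Given this, the error term is $ \lesssim \varepsilon^{ 3/2 } \int_1^\infty \lambda^{ \delta - n/2 } d \lambda \lesssim \varepsilon^{ 3/2 } $, because $ n \geq 4 $ and $ \delta $ is small. We then get $ E_N ( \phi ) ( \tau ) \leq \varepsilon + C \varepsilon^{ 3/2 } \leq \frac32 \varepsilon $ once $ \varepsilon $ is small enough.

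For high orders, $ N - n + 1 \leq | B | \leq N $, the required $ L^2 $ bound is exactly the hypothesis \eqref{L2}. For low orders, $ | B | \leq N - n $, we interpolate: we estimate one factor $ \int | \hat{ Z }_B f | dv $ pointwise and keep the other in $ L^1 $ on $ H_\lambda $.

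\emph{Pointwise factor.} By \eqref{KSf} with $ k = 0 $ and \eqref{bootsf}, $ \int | \hat{ Z }_B f | \, dv \lesssim \varepsilon \lambda^\delta / ( t^{ n - 1 } \lambda ) $. This uses $ | B | + n \leq N $, and we need $ \hat{ E }_N \leq \hat{ E }_{ N , 1 } $. That inequality holds because $ v^0 \geq 1 $ and $ \hat{ e } $ is monotone in its argument.

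\emph{$ L^1 $ factor.} By the third inequality of Lemma \ref{lem chiintf}, $ \int_{ H_\lambda } \int | \hat{ Z }_B f | \, d v \, d\mu \lesssim \varepsilon \lambda^\delta $.

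Combining the two factors gives the bound $ \int \frac{ t }{ \lambda } \frac{ \varepsilon \lambda^\delta }{ t^{ n - 1 } \lambda } \int | \hat{ Z }_B f | \, d\mu \lesssim \varepsilon^2 \lambda^{ 2 \delta - 2 } \sup t^{ 2 - n } $. Since $ t \geq \lambda $, this is $ \lesssim \varepsilon^2 \lambda^{ 2 \delta - n } $.

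The main delicacy is this low-order bookkeeping: getting the exact power $ \lambda^{ 2 \delta - n } $ from the weights $ t / \lambda $ and $ t \geq \lambda $. If the weight from $ \frac{ t }{ \lambda } $ turns out to be worse, the fallback is the improved inequality \eqref{KSfimprove}, which gives $ t^{ - n } $ decay for orders $ \leq \lfloor N/2 \rfloor - n $. That fallback uses the $ v^0 $-weighted part of $ \hat{ E }_{ N , 1 } $.

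A second point to check is integrability. For $ n = 4 $ the exponent $ \delta - n/2 = \delta - 2 $ is still integrable, so the smallness of $ \delta = \varepsilon^{1/4} $ only needs $ \delta < 1 $.
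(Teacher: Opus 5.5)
Your proof is correct, and its skeleton is exactly the paper's. Both arguments apply the energy identity of Lemma \ref{lem chiphiest} to \eqref{ZKG} and use Cauchy--Schwarz on $H_\lambda$ with the weights $t/\lambda$ and $\lambda/t$. Both then use the bootstrap bound $E_N^{1/2}(\phi)\lesssim\varepsilon^{1/2}$, invoke hypothesis \eqref{L2} for $N-n+1\le|B|\le N$, and conclude from the integrability of $\lambda^{\delta-n/2}$.

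The one place you differ is the low-order range $|B|\le N-n$. The paper squares the Klainerman--Sobolev bound $\int|\hat{Z}_B f|\,dv\lesssim \varepsilon\lambda^{\delta}/(t^{n-1}\lambda)$ and integrates it against the volume form \eqref{dmuH_tau}. This requires evaluating $\int_0^\infty r^{n-1}t^{2-2n}\,dr\sim\lambda^{2-n}$ along $H_\lambda$, which converges for $n\ge 3$. You instead apply that pointwise bound to only one factor. The remaining $\int_{H_\lambda}\int|\hat{Z}_B f|\,dv\,d\mu_{H_\lambda}$ you bound directly by the energy through the third inequality of Lemma \ref{lem chiintf}, so you only need $t\ge\lambda$.

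Your version is a little more economical. It needs no radial integral, and it avoids the paper's somewhat loose step where $t$ reappears outside the integral after integrating in $r$. The paper's version uses only pointwise information on $f$. Both give $\varepsilon^2\lambda^{2\delta-n}$.

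The hedge in your last paragraphs is unnecessary, because the main argument already produces the exact power. Note also that the proposed fallback via \eqref{KSfimprove} could not have replaced it. It only applies for $|B|\le\lfloor N/2\rfloor-n$, not the whole range $|B|\le N-n$.
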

\begin{proof}
Applying Lemma \ref{lem chiphiest} to the equation \eqref{ZKG} we obtain
\begin{align*}
& E_N ( \phi ) ( \tau ) - E_N ( \phi ) ( 1 ) \\
& \lesssim \sum_{ | A | \leq N } \sum_{ | B | \leq | A | } \int_1^\tau \int_{ H_\lambda } \left( \int_\bbrn | \hat{ Z }_B f | \, d v \right) \left| \partial_t Z_A \phi \right| \, d \mu_{ H_\lambda } d \lambda \\
& \lesssim \sum_{ | A | \leq N } \sum_{ | B | \leq | A | } \int_1^\tau \left( \int_{ H_\lambda } \frac{ t }{ \lambda } \left( \int_\bbrn | \hat{ Z }_B f | \, d v \right)^2 d \mu_{ H_\lambda } \right)^{ \frac12 } \left( \int_{ H_\lambda } \frac{ \lambda }{ t } | \partial_t Z_A \phi |^2 d \mu_{ H_\lambda } \right)^{ \frac12 } d \lambda .
\end{align*}
Since $ ( \lambda / t ) | \partial_t Z_A \phi |^2 \lesssim e ( Z_A \phi ) $ by \eqref{chiphiphi}, we have
\begin{align*}
& E_N ( \phi ) ( \tau ) - E_N ( \phi ) ( 1 ) \\
& \lesssim \sum_{ | B | \leq N } \int_1^\tau \left( \int_{ H_\lambda } \frac{ t }{ \lambda } \left( \int_\bbrn | \hat{ Z }_B f | \, d v \right)^2 d \mu_{ H_\lambda } \right)^{ \frac12 } E_N^{ \frac12 } ( \phi ) ( \lambda ) \, d \lambda \\
& \lesssim \varepsilon^\frac12 \sum_{ | B | \leq N } \int_1^\tau \left( \int_{ H_\lambda } \frac{ t }{ \lambda } \left( \int_\bbrn | \hat{ Z }_B f | \, d v \right)^2 d \mu_{ H_\lambda } \right)^\frac12 d \lambda ,
\end{align*}
by the assumption \eqref{bootsphi}. For $ | B | \leq N - n $, we apply Proposition \ref{prop KSf} to obtain
\begin{align*}
\int_{ H_\lambda } \frac{ t }{ \lambda } \left( \int_\bbrn | \hat{ Z }_B f | \, d v \right)^2 d \mu_{ H_\lambda } & \lesssim \int_{ H_\lambda } \frac{ t }{ \lambda } \left( \frac{ 1 }{ t^{ n - 1 } \lambda } \hat{ E }_{ N , 1 } ( f ) ( \lambda ) \right)^2 d \mu_{ H_\lambda } \\
& \lesssim \int_0^\infty \frac{ \varepsilon^2 \lambda^{ 2 \delta } }{ t^{ 2 n - 2 } \lambda^2 } r^{ n - 1 } d r \\
& \lesssim \varepsilon^2 \lambda^{ 2 \delta - 2 } \int_0^\infty \frac{ r^{ n - 1 } }{ ( 1 + t + r )^{ 2 n - 2 } } \, d r \\
& \lesssim \frac{ \varepsilon^2 \lambda^{ 2 \delta - 2 } }{ t^{ n - 2 } } ,
\end{align*}
where we used \eqref{dmuH_tau}. Note that the last integral converges for $ n \geq 3 $. Since $ t \geq \lambda $, we obtain
\begin{align*}
\int_{ H_\lambda } \frac{ t }{ \lambda } \left( \int_\bbrn | \hat{ Z }_B f | \, dv \right)^2 d \mu_{ H_\lambda } \lesssim \varepsilon^2 \lambda^{ 2 \delta - n } .
\end{align*}
For $ N - n + 1 \leq | B | \leq N $, we use the assumption \eqref{L2} of the lemma to obtain the same result. Hence, we obtain
\begin{align*}
& E_N ( \phi ) ( \tau ) - E_N ( \phi ) ( 1 ) \lesssim \varepsilon^{ \frac32 } \int_1^\tau \lambda^{ \delta - \frac{ n }{ 2 } } \, d \lambda ,
\end{align*}
which is integrable for small $ \delta \geq 0 $ and $ n \geq 3 $. This proves the lemma for small $ \varepsilon > 0 $.
\end{proof}

\subsection{Improvement for the distribution function} \label{sec improvef}
In this section we improve the estimate \eqref{bootsf}. Recall that the energy $ \hat{ E }_{ N , 1 } ( f ) $ defined in \eqref{fnormw} has lower order derivatives with $ v^0 $-weight and higher order derivatives without the weight. We first consider the higher order derivatives in Lemma \ref{lem intchiZf1} and then the lower order derivatives in Lemma \ref{lem intchiZf2}. In Lemma \ref{lem improvef} we combine the Lemmas \ref{lem intchiZf1} and \ref{lem intchiZf2} to obtain the desired improvement \eqref{improvef}.

\begin{lemma} \label{lem intchiZf1}
Let $ | A | \geq \lfloor N / 2 \rfloor + 1 $. Then we have
\begin{align*}
\int_{ H_\tau } \hat{ e } ( | \hat{ Z }_A f | ) \, d \mu_{ H_\tau } \leq \int_{ H_1 } \hat{ e } ( | \hat{ Z }_A f | ) \, d \mu_{ H_1 } + C
\left\{
\begin{aligned}
& \varepsilon^{ \frac32 } & ( n \geq 5 ) , \\
& \varepsilon^{ \frac32 } \delta^{ - 1 } \tau^\delta & ( n = 4 ) , 
\end{aligned}
\right.
\end{align*}
for some $ C > 0 $.
\end{lemma}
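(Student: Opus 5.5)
The plan is to apply Lemma \ref{lem intchif} to the commuted equation \eqref{TphiZf} for $\hat Z_A f$, and then bound the resulting spacetime integral under the bootstrap assumptions \eqref{bootsphi}--\eqref{bootsf}. Lemma \ref{lem intchif} gives
\begin{align*}
\int_{H_\tau}\hat e(|\hat Z_A f|)\,d\mu_{H_\tau}\le \int_{H_1}\hat e(|\hat Z_A f|)\,d\mu_{H_1}+\int_1^\tau\int_{H_\lambda}\int_{\bbrn}\Big(|g|+|\nabla_x\phi||\hat Z_Af|\Big)\,dv\,d\mu_{H_\lambda}d\lambda ,
\end{align*}
where $g$ is the right-hand side of \eqref{TphiZf}.

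Every $Q\in\cQ$ satisfies $|Q|\lesssim t$, because elements of $\cP$ are bounded and $|x|\le t$. Hence the error term is controlled by a sum of integrals of the form
\begin{align*}
I_{B,C}=\int_1^\tau\int_{H_\lambda} t\,|\partial Z_B\phi|\int_{\bbrn}|\hat Z_Cf|\,dv\,d\mu_{H_\lambda}d\lambda,\qquad |B|+|C|\le|A|+1,\quad 1\le|C|\le |A|\le N .
\end{align*}
The term $|\nabla_x\phi||\hat Z_Af|$ is of the same type, with $B=\emptyset$ and $C=A$ (even without the factor $t$). The aim is to show $I_{B,C}\lesssim \varepsilon^{3/2}\int_1^\tau\lambda^{1-\frac n2+\delta}\,d\lambda$. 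This gives a bounded quantity for $n\ge5$ (with $\delta=0$, the integrand is $\lambda^{-3/2}$). For $n=4$ it gives $\int_1^\tau\lambda^{-1+\delta}\,d\lambda\le\delta^{-1}\tau^\delta$, which is exactly the claimed bound.

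\emph{Case 1: $|B|\le N-\lfloor n/2\rfloor-1$.} Here $\partial Z_B\phi$ is estimated pointwise by \eqref{KSphiimprove} and \eqref{bootsphi}:
\begin{align*}
|\partial Z_B\phi|\lesssim \varepsilon^{1/2}t^{1-\frac n2}\lambda^{-1}.
\end{align*}
Since $n\ge4$ and $t\ge\lambda$, this gives $t|\partial Z_B\phi|\lesssim\varepsilon^{1/2}\lambda^{1-\frac n2}$. The remaining factor is handled by Lemma \ref{lem chiintf}, which gives $\int|\hat Z_Cf|\,dv\le\hat e(|\hat Z_Cf|)$. Integrating over $H_\lambda$ and using $|C|\le N$ together with \eqref{bootsf}, that factor is bounded by $2\varepsilon\lambda^\delta$. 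Multiplying the two bounds yields the target rate $\varepsilon^{3/2}\lambda^{1-\frac n2+\delta}$.

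\emph{Case 2: $|B|\ge N-\lfloor n/2\rfloor$.} Then $|C|\le|A|+1-|B|\le\lfloor n/2\rfloor+1$. Since $N\ge5n+2$, this gives $|C|\le\lfloor N/2\rfloor-n$. For such $C$ the improved inequality \eqref{KSfimprove}, combined with the $v^0$-weighted part of $\hat E_{N,1}$ and \eqref{bootsf}, gives
\begin{align*}
\int|\hat Z_Cf|\,dv\lesssim\varepsilon\lambda^\delta t^{-n}.
\end{align*}
The field is instead kept in $L^2$. Apply Cauchy--Schwarz on $H_\lambda$ with weights $\lambda/t$ and $t/\lambda$. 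The first factor is $\big(\int_{H_\lambda}(\lambda/t)|\partial Z_B\phi|^2\,d\mu_{H_\lambda}\big)^{1/2}\lesssim E_N^{1/2}(\phi)\lesssim\varepsilon^{1/2}$, by \eqref{chiphiphi} and $|B|\le N$. The second factor is bounded as follows, using \eqref{dmuH_tau} and $t\simeq\lambda+r$:
\begin{align*}
\int_{H_\lambda}\frac{t^3}{\lambda}\Big(\int|\hat Z_Cf|\,dv\Big)^2d\mu_{H_\lambda}\lesssim\varepsilon^2\lambda^{2\delta}\int_0^\infty\frac{r^{n-1}}{(\lambda+r)^{2n-2}}\,dr\lesssim\varepsilon^2\lambda^{2\delta+2-n}.
\end{align*}
Taking the square root again gives the rate $\varepsilon^{3/2}\lambda^{1-\frac n2+\delta}$.

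The main obstacle is Case 2. The growth $|Q|\lesssim t$ coming from the commutators is dangerous, and the unweighted bound \eqref{KSf} with decay $t^{1-n}\lambda^{-1}$ only produces a logarithmically divergent $r$-integral when $n=4$. This is why the improved decay $t^{-n}$ from the $v^0$-weighted low-order energy is needed. It is also where the hypothesis $N\ge5n+2$ enters, to ensure that the low-order index $C$ falls within the range where \eqref{KSfimprove} applies. Summing over the finitely many pairs $(B,C)$ then yields the stated bound with $C>0$ independent of $\tau$.
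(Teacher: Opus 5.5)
Your proof is correct and matches the paper's argument essentially step for step. Like the paper, you apply Lemma \ref{lem intchif} to \eqref{TphiZf}, use $|Q|\lesssim t$, and split at $|B| = N-\lfloor n/2\rfloor$: the pointwise bound \eqref{KSphiimprove} handles low $|B|$, while for high $|B|$ you use Cauchy--Schwarz against $E_N^{1/2}(\phi)$ together with the $v^0$-weighted decay \eqref{KSfimprove}, which gives $\varepsilon^{3/2}\int_1^\tau \lambda^{1-\frac n2+\delta}\,d\lambda$. Your remark that the unweighted bound \eqref{KSf} would produce a logarithmically divergent $r$-integral when $n=4$ is accurate, and it explains why \eqref{KSfimprove} is needed in that case.
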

\begin{proof}
Applying Lemma \ref{lem intchif} to the equation \eqref{TphiZf} we obtain
\begin{align*}
& \int_{ H_\tau } \hat{ e } ( | \hat{ Z }_A f | ) \, d \mu_{ H_\tau } - \int_{ H_1 } \hat{ e } ( | \hat{ Z }_A f | ) \, d \mu_{ H_1 } \\
& \lesssim \sum_{ \substack{ | B | + | C | \leq | A | + 1 \\ 1 \leq | C | \leq | A | } } \sum_{ 0 \leq \mu \leq n } \int_1^\tau \int_{ H_\lambda } \int_\bbrn \left( | Q^{ \mu B C }_{ A } \partial_{ x^\mu } ( Z_B \phi ) \hat{ Z }_C f | + | \nabla_x \phi | | \hat{ Z }_A f | \right) d v \, d \mu_{ H_\lambda } d \lambda .
\end{align*}
Note that the second term in the integrand can be absorbed into the first term. Since the polynomials $ Q^{ \mu B C }_{ A } $ can be estimated as $ | Q^{ \mu B C }_{ A } | \lesssim t $, we only need to estimate
\begin{align*}
\sum_{ \substack { | B | + | C | \leq | A | + 1 \\ 1 \leq | C | \leq | A | } } \sum_{ 0 \leq \mu \leq n } \int_1^\tau \int_{ H_\lambda } t | \partial_{ x^\mu } ( Z_B \phi ) | \int_\bbrn | \hat{ Z }_C f | \, d v \, d \mu_{ H_\lambda } d \lambda .
\end{align*}
For $ | B | \leq N - \lfloor n / 2 \rfloor - 1 $, we apply Proposition \ref{prop KSphi} and Lemma \ref{lem chiintf} to obtain
\begin{align}
& \int_{ H_\lambda } t | \partial_{ x^\mu } ( Z_B \phi ) | \int_\bbrn | \hat{ Z }_C f | \, d v \,d \mu_{ H_\lambda } \nonumber \\
& \lesssim \int_{ H_\lambda } \frac{ t }{ t^{ \frac{ n }{ 2 } - 1 } \lambda } E_{ \lfloor n / 2 \rfloor +1 }^{ \frac{ 1 }{ 2 } } ( Z_B \phi ) \int_\bbrn | \hat{ Z }_C f | \, d v \, d \mu_{ H_\lambda } \nonumber \\
& \lesssim \frac{ 1 }{ \lambda^{ \frac{ n }{ 2 } - 1 } } E_N^{ \frac12 } ( \phi ) \int_{ H_\lambda } \hat{ e } ( | \hat{ Z }_C f | ) \, d \mu_{ H_\lambda } \nonumber \\
& \lesssim \frac{ 1 }{ \lambda^{ \frac{ n }{ 2 } - 1 } } E_N^{ \frac12 } ( \phi ) \hat{ E }_{ N , 1 } ( f ) . \label{intchifimprove1}
\end{align}
For $ | B | \geq N - \lfloor n / 2 \rfloor $, we notice that $ | B | + | C | \leq N + 1 $ and $ N \geq 3 n + 2 $ to conclude
\begin{align*}
| C | \leq \left\lfloor \frac{ n }{ 2 } \right\rfloor + 1 \leq \left\lfloor \frac{ N }{ 2 } \right\rfloor - n .
\end{align*}
Hence, we obtain by the estimate \eqref{KSfimprove} of Proposition \ref{prop KSf}
\begin{align*}
\int_\bbrn | \hat{ Z }_C f | \, d v & \lesssim \frac{ 1 }{ t^n } \sum_{ | D | \leq n } \int_{ H_\lambda } \hat{ e } ( v^0 | \hat{ Z }_D \hat{ Z }_C f | ) \, d \mu_{ H_\lambda } \\
& \lesssim \frac{ 1 }{ t^n } \hat{ E }_{ N , 1 } ( f ) ,
\end{align*}
which shows that
\begin{align}
& \int_{ H_\lambda } t | \partial_{ x^\mu } ( Z_B \phi ) | \int_\bbrn | \hat{ Z }_C f | \, d v \, d \mu_{ H_\lambda } \nonumber \\
& \lesssim \hat{ E }_{ N , 1 } ( f ) \int_{ H_\lambda } | \partial_{ x^\mu } ( Z_B \phi ) | \frac{ 1 }{ t^{ n - 1 } } d \mu_{ H_\lambda } \nonumber \\
& \lesssim \hat{ E }_{ N , 1 } ( f ) \left( \int_{ H_\lambda } | \partial_{ x^\mu } ( Z_B \phi ) |^2 \frac{ \lambda }{ t } \, d \mu_{ H_\lambda } \right)^{ \frac{ 1 }{ 2 } } \left( \int_{ H_\lambda } \frac{ 1 }{ t^{ 2 n - 2 } } \frac{ t }{ \lambda } \, d \mu_{ H_\lambda } \right)^{ \frac{ 1 }{ 2 } } \nonumber \\
& \lesssim \hat{ E }_{ N , 1 } ( f ) \left( \int_{ H_\lambda } e ( Z_B \phi ) \, d \mu_{ H_\lambda } \right)^{ \frac{ 1 }{ 2 } } \left( \int_0^\infty \frac{ r^{ n - 1 } }{ t^{ 2 n - 2 } } \, d r \right)^{ \frac{ 1 }{ 2 } } \nonumber \\
& \lesssim \hat{ E }_{ N , 1 } ( f ) E_N^{ \frac12 } ( \phi ) \left( \frac{ 1 }{ t^{ n - 2 } } \right)^{ \frac12 } , \label{intchifimprove2}
\end{align}
where we used \eqref{chiphiphi} and \eqref{dmuH_tau}, and the last inequality holds for $ n \geq 3 $. We combine \eqref{intchifimprove1} and \eqref{intchifimprove2} to conclude that
\begin{align*}
\int_{ H_\tau } \hat{ e } ( | \hat{ Z }_A f | ) \, d \mu_{ H_\tau } - \int_{ H_1 } \hat{ e } ( | \hat{ Z }_A f | ) \, d \mu_{ H_1 } & \lesssim \varepsilon^{ \frac32 } \int_1^\tau \lambda^{ \delta + 1 - \frac{ n }{ 2 } } d \lambda \\
& \lesssim \frac{ \varepsilon^{ \frac32 } ( \tau^{ \delta + 2 - \frac{ n }{ 2 } } - 1 ) }{ \delta + 2 - \frac{ n }{ 2 } } ,
\end{align*}
which completes the proof of the lemma.
\end{proof}

\begin{lemma} \label{lem intchiZf2}
Let $ | A | \leq \lfloor N / 2 \rfloor $. Then we have
\begin{align*}
\int_{ H_\tau } \hat{ e } ( v^0 | \hat{ Z }_A f | ) \, d \mu_{ H_\tau } \leq \int_{ H_1 } \hat{ e } ( v^0 | \hat{ Z }_A f | ) \, d \mu_{ H_1 } + C
\left\{
\begin{aligned}
& \varepsilon^{ \frac32 } & ( n \geq 5 ) , \\
& \varepsilon^{ \frac32 } \delta^{ - 1 } \tau^\delta & ( n = 4 ) , \\
\end{aligned}
\right.
\end{align*}
for some $ C > 0 $.
\end{lemma}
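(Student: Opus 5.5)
We follow the proof of Lemma \ref{lem intchiZf1}, now with the weight $ v^0 $. First we derive the equation satisfied by $ v^0 \hat{ Z }_A f $. Since $ T v^0 = 0 $ and $ T_\phi v^0 = - v^0 \nabla_x \phi \cdot \frac{ v }{ v^0 } = - v \cdot \nabla_x \phi $, we get from \eqref{TphiZf}
\begin{align*}
T_\phi ( v^0 \hat{ Z }_A f ) = v^0 \sum_{ \substack{ | B | + | C | \leq | A | + 1 \\ 1 \leq | C | \leq | A | } } \sum_{ 0 \leq \mu \leq n } Q^{ \mu B C }_{ A } \partial_{ x^\mu } ( Z_B \phi ) \hat{ Z }_C f - ( v \cdot \nabla_x \phi ) \hat{ Z }_A f .
\end{align*}
For $ | A | = 0 $ only the last term is present. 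We apply Lemma \ref{lem intchif} to $ v^0 \hat{ Z }_A f $. The right hand side then contains integrals of $ | \nabla_x \phi | \, v^0 | \hat{ Z }_A f | $ and of $ t \, | \partial Z_B \phi | \, v^0 | \hat{ Z }_C f | $ over $ v $, using $ | Q | \lesssim t $. In both cases the velocity weight is exactly $ v^0 $ on $ \hat{ Z }_C f $ with $ | C | \leq | A | \leq \lfloor N/2 \rfloor $, which is the weight controlled by the first sum in \eqref{fnormw}. So it suffices to bound
\begin{align*}
\int_1^\tau \int_{ H_\lambda } t \, | \partial Z_B \phi | \int_\bbrn v^0 | \hat{ Z }_C f | \, d v \, d \mu_{ H_\lambda } d \lambda , \qquad | B | \leq \lfloor N/2 \rfloor + 1 ,\ | C | \leq \lfloor N/2 \rfloor .
\end{align*}

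Here the situation is better than in Lemma \ref{lem intchiZf1}, because $ | B | \leq \lfloor N/2 \rfloor + 1 \leq N - \lfloor n/2 \rfloor - 1 $ whenever $ N \geq 5n+2 $. We therefore never need the second case and can always put the field in $ L^\infty $ using \eqref{KSphiimprove}:
\begin{align*}
t | \partial Z_B \phi | \lesssim \frac{ t }{ t^{ \frac n2 - 1 } \lambda } E_N^{ \frac12 } ( \phi ) \lesssim \lambda^{ 1 - \frac n2 } E_N^{ \frac12 } ( \phi ) .
\end{align*}
For the distribution factor we use the third inequality of Lemma \ref{lem chiintf} applied to $ v^0 | \hat{ Z }_C f | $:
\begin{align*}
\int_{ H_\lambda } \int v^0 | \hat{ Z }_C f | \, d v \, d \mu \leq \int_{ H_\lambda } \hat{ e } ( v^0 | \hat{ Z }_C f | ) \, d \mu \leq \hat{ E }_{ N , 1 } ( f ) .
\end{align*}
Together with the bootstrap assumptions \eqref{bootsphi}--\eqref{bootsf}, the integrand is then bounded by $ \varepsilon^{ 3/2 } \lambda^{ 1 - \frac n2 + \delta } $. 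Integrating in $ \lambda $ gives a bound $ \varepsilon^{ 3/2 } $ for $ n \geq 5 $ (with $ \delta $ small, so that $ 1 - \frac n2 + \delta < -1 $). For $ n = 4 $ it gives $ \varepsilon^{ 3/2 } \int_1^\tau \lambda^{ -1 + \delta } \, d \lambda \leq \varepsilon^{ 3/2 } \delta^{ -1 } \tau^\delta $.

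The main obstacle is the derivation of the commuted equation for the weighted quantity. One must make sure that multiplying by $ v^0 $ does not create terms like $ v^0 \partial_{ v } $ falling on $ \hat{ Z }_C f $ without compensation, and that the conversion \eqref{partialvOmegahat} used in Lemma \ref{lem TphiZhat2} does not need an extra $ v^0 $ beyond the one we carry. I would check this carefully by writing $ T_\phi ( v^0 g ) = v^0 T_\phi g + g \, T_\phi v^0 $. The coefficients in $ \cQ $ are bounded in $ v $, since $ | v^j / v^0 | \leq 1 $, so the only velocity growth is the single factor $ v^0 $, and this is absorbed into the weighted energy. A secondary point is the index bookkeeping, $ \lfloor N/2 \rfloor + 1 \leq N - \lfloor n/2 \rfloor - 1 $, which holds comfortably under $ N \geq 5n+2 $.
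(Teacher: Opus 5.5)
Your proposal is correct and follows essentially the same route as the paper. You write the equation for $v^0\hat{Z}_A f$ using $T_\phi v^0=-v\cdot\nabla_x\phi$, apply Lemma \ref{lem intchif}, and bound $\partial Z_B\phi$ pointwise via \eqref{KSphiimprove} (admissible because $|B|$ is small). You then control $\int v^0|\hat{Z}_C f|\,dv$ by the third inequality of Lemma \ref{lem chiintf} and the weighted part of $\hat{E}_{N,1}(f)$, and integrate $\varepsilon^{3/2}\lambda^{1-\frac n2+\delta}$. The differences are cosmetic: the paper estimates the $T_\phi v^0$ term separately (obtaining the better rate $\lambda^{-n/2}$ instead of absorbing it with a wasted factor $t$), and it uses the sharper index bound $|B|\le\lfloor N/2\rfloor$ rather than your $\lfloor N/2\rfloor+1$.
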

\begin{proof}
Let us consider the evolution equation of $ v^0 \hat{ Z }_A f $:
\begin{align}
T_{ \phi } \left[ v^0 \hat{ Z }_A f \right] = ( T_\phi v^0 ) \hat{ Z }_A f + v^0 T_\phi \hat{ Z }_A f . \label{TphivZf}
\end{align}
For the first term on the right hand side we notice that $ T_\phi v^0 = - v \cdot \nabla_x \phi $. We use Proposition \ref{prop KSphi} to obtain
\begin{align*}
| T_\phi v^0 | \lesssim \frac{ \varepsilon^{ \frac12 } }{ t^{ \frac{ n }{ 2 } - 1 } \tau } v^0 .
\end{align*}
For the second term we note that the quantity $ T_\phi \hat{ Z }_A f $ can be written as in \eqref{TphiZf} for $ | A | \geq 1 $. We now apply Lemma \ref{lem intchif} to the equation \eqref{TphivZf} to obtain the following estimate:
\begin{align*}
& \int_{ H_\tau } \hat{ e } ( v^0 | \hat{ Z }_A f | ) \, d \mu_{ H_\tau } - \int_{ H_1 } \hat{ e } ( v^0 | \hat{ Z }_A f | ) \, d \mu_{ H_1 } \\
& \lesssim \int_1^\tau \int_{ H_\lambda } \int_\bbrn \left| ( T_\phi v^0 ) \hat{ Z }_A f + v^0 T_\phi \hat{ Z }_A f \right| + | \nabla_x \phi | | v^0 \hat{ Z }_A f | \, d v \, d \mu_{ H_\lambda } d \lambda \\
& \lesssim \int_1^\tau \int_{ H_\lambda } \int_\bbrn \frac{ \varepsilon^{ \frac12 } }{ t^{ \frac{ n }{ 2 } - 1 } \lambda } v^0 | \hat{ Z }_A f | \, d v \, d \mu_{ H_\lambda } d \lambda \\
& \quad + \sum_{ \substack{ | B | + | C | \leq | A | + 1 \\ 1 \leq | C | \leq | A | } } \sum_{ 0 \leq \mu \leq n } \int_1^\tau \int_{ H_\lambda } \int_\bbrn v^0 | Q^{ \mu B C }_{ A } | | \partial_{ x^\mu } ( Z_B \phi ) | | \hat{ Z }_C f | \, d v \, d \mu_{ H_\lambda } d \lambda ,
\end{align*}
for some $ Q^{ \mu B C }_{ A } \in \cQ $. The first quantity is estimated by using Lemma \ref{lem chiintf} as
\begin{align*}
\int_{ H_\lambda } \int_\bbrn \frac{ \varepsilon^{ \frac12 } }{ t^{ \frac{ n }{ 2 } - 1 } \lambda } v^0 | \hat{ Z }_A f | \, d v \, d \mu_{ H_\lambda } & \lesssim \frac{ \varepsilon^{ \frac12 } }{ \lambda^{ \frac{ n }{ 2 } } } \int_{ H_\lambda } \hat{ e } ( v^0 | \hat{ Z }_A f | ) \, d \mu_{ H_\lambda } \\
& \lesssim \frac{ \varepsilon^{ \frac12 } }{ \lambda^{ \frac{ n }{ 2 } } } \hat{ E }_{ N , 1 } ( f ) ,
\end{align*}
since $ | A | \leq \lfloor N / 2 \rfloor $. For the second quantity we notice that $ | B | \leq \lfloor N / 2 \rfloor $ and
\begin{align*}
\left\lfloor \frac{ N }{ 2 } \right\rfloor + \left\lfloor \frac{ n }{ 2 } \right\rfloor + 1 \leq \left\lfloor \frac{ N + n + 2 }{ 2 } \right\rfloor \leq N ,
\end{align*}
since $ N \geq 3 n + 2 $. Now, we apply Proposition \ref{prop KSphi} to obtain
\begin{align*}
& \int_{ H_\lambda } \int_\bbrn v^0 | Q^{ \mu B C }_{ A } | | \partial_{ x^\mu } ( Z_B \phi ) | | \hat{ Z }_C f | \, d v \, d \mu_{ H_\lambda } \\
& \lesssim \int_{ H_\lambda } \frac{ t }{ t^{ \frac{ n }{ 2 } - 1 } \lambda } E_{ \lfloor n / 2 \rfloor + 1 }^{ \frac12 } ( Z_B \phi ) \int_\bbrn v^0 | \hat{ Z }_C f | \, d v \, d \mu_{ H_\lambda } \\
& \lesssim \frac{ 1 }{ \lambda^{ \frac{ n }{ 2 } - 1 } } E_N^{ \frac12 } ( \phi ) \int_{ H_\lambda } \hat{ e } ( v^0 | \hat{ Z }_C f | ) \, d \mu_{ H_\lambda } \\
& \lesssim \varepsilon^{ \frac{ 3 }{ 2 } } \lambda^{ \delta + 1 - \frac{ n }{ 2 } } .
\end{align*}
Integrating the above quantities for $ 1 \leq \lambda \leq \tau $ as in the previous lemma we obtain the desired result for $ | A | \geq 1 $. For $ | A | = 0 $, we can easily obtain the same result, since the second term on the right hand side of \eqref{TphivZf} vanishes, and this completes the proof of the lemma.
\end{proof}

\begin{lemma} \label{lem improvef}
For $ 1 \leq \tau < T $, we have 
\begin{align}
\hat{ E }_{ N , 1 } ( f ) ( \tau ) \leq \frac{ 3 }{ 2 } \varepsilon \tau^\delta , \label{improvef}
\end{align}
where $ \delta = 0 $ for $ n \geq 5 $, and $ \delta = \varepsilon^{ 1 / 4 } $ for $ n = 4 $. 
\end{lemma}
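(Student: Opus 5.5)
The plan is to sum the estimates of Lemmas \ref{lem intchiZf1} and \ref{lem intchiZf2} over all multi-indices appearing in the definition \eqref{fnormw} of $\hat{E}_{N,1}(f)$, and then use the smallness of the initial data \eqref{smalldata}. The number of multi-indices with $|A|\leq N$ is a constant $C_N$ depending only on $N$ and $n$. Summing therefore gives
\begin{align*}
\hat{E}_{N,1}(f)(\tau) \leq \hat{E}_{N,1}(f)(1) + C C_N
\left\{
\begin{aligned}
& \varepsilon^{\frac32} & (n\geq 5), \\
& \varepsilon^{\frac32}\delta^{-1}\tau^\delta & (n=4),
\end{aligned}
\right.
\end{align*}
for $1\leq\tau<T$, where $\hat{E}_{N,1}(f)(1)<\varepsilon$ by \eqref{smalldata}. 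All the bootstrap inputs \eqref{bootsphi}--\eqref{bootsf} used in those lemmas are valid on $[1,T)$.

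For $n\geq 5$ I take $\delta=0$. The exponent $\delta+1-n/2\leq -3/2$ in the proof of Lemma \ref{lem intchiZf1} makes the time integral uniformly bounded, so the right-hand side is $\varepsilon + C'\varepsilon^{3/2}$. Choosing $\varepsilon$ so small that $C'\varepsilon^{1/2}\leq 1/2$ gives $\hat{E}_{N,1}(f)(\tau)\leq \frac32\varepsilon = \frac32\varepsilon\tau^0$.

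For $n=4$ I take $\delta=\varepsilon^{1/4}$. Then $\varepsilon^{3/2}\delta^{-1}\tau^\delta = \varepsilon^{5/4}\tau^\delta$, and since $\tau^\delta\geq 1$ we get
\[
\hat{E}_{N,1}(f)(\tau)\leq \varepsilon\tau^\delta + C'\varepsilon^{5/4}\tau^\delta \leq \tfrac32\varepsilon\tau^\delta
\]
once $C'\varepsilon^{1/4}\leq 1/2$. This is the point where the choice $\delta=\varepsilon^{1/4}$ matters. The factor $\delta^{-1}$ comes from integrating $\lambda^{\delta-1}$, and it must be beaten by the extra $\varepsilon^{1/2}$ in $\varepsilon^{3/2}$, which requires $\delta\gg\varepsilon^{1/2}$. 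The choice must also keep $\delta$ small, which the earlier lemmas require (for example the integrability of $\lambda^{\delta-n/2}$ in Lemma \ref{lem improvephi}).

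The main obstacle is bookkeeping rather than analysis. I must check that the constant $C$ in Lemmas \ref{lem intchiZf1} and \ref{lem intchiZf2} is independent of $\delta$ and $\varepsilon$, apart from the explicit $\delta^{-1}$. This holds because $C$ arises only from Propositions \ref{prop KSf}, \ref{prop KSphi}, the bounds $|Q|\lesssim t$, and the bootstrap constants. I must also check that $\delta$ is small enough to be admissible in those lemmas, in particular $\delta+2-n/2<0$ or $=\delta$ appropriately, which holds for small $\varepsilon$. With these checks the smallness threshold on $\varepsilon$ depends only on $n$ and $N$, which completes the improvement \eqref{improvef}.
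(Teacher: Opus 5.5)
Your proposal is correct and follows essentially the same route as the paper: sum Lemmas \ref{lem intchiZf1} and \ref{lem intchiZf2} over the multi-indices in \eqref{fnormw}, use $\hat{E}_{N,1}(f)(1)<\varepsilon$ from \eqref{smalldata}, and for $n=4$ take $\delta=\varepsilon^{1/4}$ so that $\varepsilon^{3/2}\delta^{-1}=\varepsilon^{5/4}$ can be absorbed for small $\varepsilon$. Your additional remarks make explicit what the paper leaves implicit, namely that the constants are independent of $\delta$ apart from the explicit $\delta^{-1}$, and that $\tau^\delta\geq 1$.
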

\begin{proof}
We combine Lemmas \ref{lem intchiZf1} and \ref{lem intchiZf2} to obtain for $ n \geq 5 $,
\begin{align*}
\hat{ E }_{ N , 1 } ( f ) ( \tau ) \leq \hat{ E }_{ N , 1 } ( f ) ( 1 ) + C \varepsilon^{ \frac32 } .
\end{align*}
Since $ \hat{ E }_{ N , 1 } ( f ) ( 1 ) < \varepsilon $ and $ \varepsilon > 0 $ is small, we obtain the desired result for $ n \geq 5 $. For $ n = 4 $, we choose $ \delta = \varepsilon^{ 1 / 4 } $ to obtain
\begin{align*}
\hat{ E }_{ N , 1 } ( f ) ( \tau ) \leq \hat{ E }_{ N , 1 } ( f ) ( 1 ) + C \varepsilon^{ \frac54 } \tau^\delta .
\end{align*}
Since $ \varepsilon > 0 $ is small, we obtain the desired result.
\end{proof}

\subsection{$ L^2 $-estimate for the distribution function} \label{sec L2}
In order to complete the bootstrap arguments we need to show that the $ L^2 $-estimate \eqref{L2} for the higher order derivatives $ \hat{ Z }_A f $ of the distribution function can be derived from the bootstrap assumptions \eqref{bootsphi} and \eqref{bootsf}. Recall that the evolution equation of $ \hat{ Z }_A f $ with $ | A | \geq 1 $ is given by \eqref{TphiZf}, which can be written as
\begin{align}
T_\phi \hat{ Z }_A f & = \sum_{ 1 \leq | C | \leq | A | } R^C_A \hat{ Z }_C f , \label{TphiZf'}
\end{align}
where the coefficients are given by
\begin{align}
R^C_A = \sum_{ | B | + | C | \leq | A | + 1 } \sum_{ 0 \leq \mu \leq n } Q^{ \mu B C }_{ A } \partial_{ x^\mu } ( Z_B \phi ) . \label{coefficient}
\end{align}
We can estimate the coefficients as follows.

\begin{lemma} \label{lem R}
The coefficient functions $ R^C_A $ given by \eqref{coefficient} satisfy the following estimates:
\begin{enumerate}
\item[$ ( i ) $] For $ \lfloor N / 2 \rfloor - n + 1 \leq | A | \leq N $ and $ \lfloor N / 2 \rfloor - n + 1 \leq | C | \leq | A | $, we have
\begin{align*}
| R^C_A | \lesssim \frac{ \varepsilon^\frac{ 1 }{2 } }{ t^{ \frac{ n }{ 2 } - 2 } \tau } .
\end{align*}
\item[$ ( ii ) $] For $ \lfloor N / 2 \rfloor - n + 1 \leq | A | \leq N $ and $ 1 \leq | C | \leq \lfloor N / 2 \rfloor - n $, we have
\begin{align*}
\int_{ H_\tau } \frac{ \tau }{ t^3 } | R^C_A |^2 d \mu_{ H_\tau } \lesssim \varepsilon .
\end{align*}
\item[$ ( iii ) $] For $ 1 \leq | A | \leq \lfloor N / 2 \rfloor - n $ and $ 1 \leq | C | \leq | A | $, we have
\begin{align*}
| R^C_A | \lesssim \frac{ \varepsilon^\frac{ 1 }{ 2 } }{ t^{ \frac{ n }{ 2 } - 2 } \tau } .
\end{align*}
\end{enumerate}
\end{lemma}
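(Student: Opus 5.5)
The plan is to bound each summand $ Q^{ \mu B C }_A \partial_{ x^\mu } ( Z_B \phi ) $ in \eqref{coefficient} separately. Two ingredients do the work: the pointwise bound $ | Q^{ \mu B C }_A | \lesssim t $, valid in the future of $ H_1 $ since $ | x | \leq t $ and $ | v^j / v^0 | \leq 1 $; and a bound on $ \partial Z_B \phi $. The index constraint $ | B | + | C | \leq | A | + 1 $ determines which bound on $ \partial Z_B \phi $ is available. When $ | B | $ is small enough that $ | B | + \lfloor n/2 \rfloor + 1 \leq N $, we use the pointwise Klainerman--Sobolev estimate \eqref{KSphiimprove} of Proposition \ref{prop KSphi} together with the bootstrap assumption \eqref{bootsphi}. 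This gives
\begin{align*}
| Q^{ \mu B C }_A \partial_{ x^\mu } ( Z_B \phi ) | \lesssim t \cdot \frac{ E_N^{ 1/2 } ( \phi ) }{ t^{ \frac n2 - 1 } \tau } \lesssim \frac{ \varepsilon^{ \frac12 } }{ t^{ \frac n2 - 2 } \tau } .
\end{align*}
When $ | B | $ is too large for this, we fall back on the $ L^2 $ control of $ \partial Z_B \phi $ contained in the energy itself via \eqref{chiphiphi}.

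For $ ( i ) $, the lower bound $ | C | \geq \lfloor N/2 \rfloor - n + 1 $ combined with $ | A | \leq N $ gives
\begin{align*}
| B | \leq N + 1 - | C | \leq N - \lfloor N/2 \rfloor + n .
\end{align*}
We then check that $ N - \lfloor N/2 \rfloor + n \leq N - \lfloor n/2 \rfloor - 1 $, which is equivalent to $ \lfloor N/2 \rfloor \geq n + \lfloor n/2 \rfloor + 1 $ and holds because $ N \geq 5n + 2 $. This is the only place the size of $ N $ really enters, and the arithmetic must be verified carefully for both parities of $ N $. With this range of $ | B | $ the pointwise bound above applies. Case $ ( iii ) $ is identical and easier: there $ | B | \leq | A | + 1 \leq \lfloor N/2 \rfloor - n + 1 $, which is well within the range where Proposition \ref{prop KSphi} applies.

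For $ ( ii ) $, $ | B | $ can be as large as $ N $, so no pointwise bound is available, and we instead bound
\begin{align*}
\frac{ \tau }{ t^3 } | Q \, \partial Z_B \phi |^2 \lesssim \frac{ \tau }{ t } | \partial Z_B \phi |^2 \lesssim \frac{ \tau }{ t + r } | \partial Z_B \phi |^2 \lesssim e ( Z_B \phi ) ,
\end{align*}
using $ | Q | \lesssim t $, then $ t + r \leq 2t $, then \eqref{chiphiphi}. Integrating over $ H_\tau $ gives at most $ E_N ( \phi ) ( \tau ) < 2 \varepsilon $ by \eqref{bootsphi}, since $ | B | \leq | A | + 1 - | C | \leq N $. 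Summing over the finitely many $ B $ and $ \mu $, and applying Cauchy--Schwarz to the finite sum, gives the claim. The main obstacle is only the index bookkeeping in $ ( i ) $; the analytic content is immediate from the earlier propositions.
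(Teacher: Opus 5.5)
Your proposal is correct and follows the paper's proof essentially step for step. For $(i)$ and $(iii)$ you combine the pointwise bound \eqref{KSphiimprove} with \eqref{bootsphi} and $|Q^{\mu B C}_A| \lesssim t$, after checking that $\lfloor n/2 \rfloor + 1 + |B| \leq N$; for $(ii)$ you integrate the energy-density bound \eqref{chiphiphi} over $H_\tau$. The only cosmetic difference is that the index check in $(i)$ already holds under $N \geq 3n+2$, which is the condition the paper invokes, rather than needing $N \geq 5n+2$.
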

\begin{proof}
For $ ( i ) $ we use Proposition \ref{prop KSphi} to obtain
\begin{align}
| \partial_{ x^\mu } ( Z_B \phi ) | & \lesssim \frac{ 1 }{ t^{ \frac{ n }{ 2 } - 1 } \tau } E_{ \lfloor n / 2 \rfloor + 1 }^{ \frac12 } ( Z_B \phi ) , \label{coefficientQ}
\end{align}
and notice that $ | B | $ is small in the sense that
\begin{align*}
\left\lfloor \frac{ n }{ 2 } \right\rfloor +1 + | B | & \leq \left\lfloor \frac{ n }{ 2 } \right\rfloor + 1 + | A | + 1 - | C | \\
& \leq \left\lfloor \frac{ n }{ 2 } \right\rfloor + 1 + N - \left\lfloor \frac{ N }{ 2 } \right\rfloor + n \\
& \leq \left\lfloor \frac{ 3 n + 2 }{ 2 } \right\rfloor + N - \left\lfloor \frac{ N }{ 2 } \right\rfloor \\
& \leq N ,
\end{align*}
since $ N \geq 3 n + 2 $. Hence, we can apply the bootstrap assumption \eqref{bootsphi} to \eqref{coefficientQ} to obtain
\begin{align*}
| \partial_{ x^\mu } ( Z_B \phi ) | & \lesssim \frac{ \varepsilon^\frac{ 1 }{ 2 } }{ t^{ \frac{ n }{ 2 } - 1 } \tau } .
\end{align*}
Since $ | Q^{ \mu B C }_{ A } | \lesssim t $, we obtain the desired result for $ ( i ) $. For $ ( i i ) $ we can only use \eqref{chiphiphi} to have
\begin{align*}
| \partial_{ x^\mu } ( Z_B \phi ) |^2 \lesssim \frac{ t }{ \tau } e ( Z_B \phi ) .
\end{align*}
Applying this to \eqref{coefficient} we obtain
\begin{align*}
\int_{ H_\tau } \frac{ \tau }{ t^3 } | R^C_A |^2 d \mu_{ H_\tau } & \lesssim \sum_{ | B | + | C | \leq | A | + 1 } \sum_{ 0 \leq \mu \leq n } \int_{ H_\tau } \frac{ \tau }{ t^3 } | Q^{ \mu B C }_{ A } |^2 | \partial_{ x^\mu } ( Z_B \phi ) |^2 d \mu_{ H_\tau } \\
& \lesssim \sum_{ | B | \leq N } \int_{ H_\tau } e ( Z_B \phi ) \, d \mu_{ H_\tau } \\
& \lesssim \varepsilon ,
\end{align*}
since $ | Q^{ \mu B C }_{ A } | \lesssim t $. For $ ( iii ) $ we have
\begin{align*}
\left\lfloor \frac{ n }{ 2 } \right\rfloor +1 + | B | \leq \left\lfloor \frac{ n }{ 2 } \right\rfloor +1 + | A | \leq \left\lfloor \frac{ n }{ 2 } \right\rfloor + 1 + \left\lfloor \frac{ N }{ 2 } \right\rfloor - n \leq N ,
\end{align*}
since $ N \geq 3 n + 2 $. Hence, we obtain the same result as in the case of $ ( i ) $.
\end{proof}

Now, we use Lemma \ref{lem R} to rewrite the equation \eqref{TphiZf'} as in \cite{FJS17}. Let $ F $ and $ F_0 $ denote the vector-valued functions given by
\begin{align*}
F : = \left( \hat{ Z }_{ A_1 } f , \dots , \hat{ Z }_{ A_q } f \right) , \qquad F_0 : = \left( \hat{ Z }_{ B_{ 1 } } f , \dots , \hat{ Z }_{ B_r } f \right) ,
\end{align*}
where $ A_i $ and $ B_j $ are all the possible multi-indices satisfying
\begin{align*}
\left\lfloor \frac{ N }{ 2 } \right\rfloor - n + 1 \leq | A_i | \leq N , \qquad 1 \leq | B_j | \leq \left\lfloor \frac{ N }{ 2 } \right\rfloor - n ,
\end{align*}
with suitable orderings. Then, the equation \eqref{TphiZf'} can be decomposed into the equations for $ F $ and $ F_0 $ given by
\begin{align}
T_\phi F + R_1 F & = R_2 F_0 , \label{TphiF} \\
T_\phi F_0 & = R_3 F_0 , \label{TphiL}
\end{align}
where $ R_1 $, $ R_2 $ and $ R_3 $ are matrix-valued functions satisfying
\begin{align}
\sup_v \| R_1 \| \lesssim \frac{ \varepsilon^{ \frac{ 1 }{ 2 } } }{ t^{ \frac{ n }{ 2 } - 2 } \tau } , \qquad \int_{ H_\tau } \frac{ \tau }{ t^3 } \sup_v \| R_2 \|^2 \, d \mu_{ H_\tau } \lesssim \varepsilon , \qquad \sup_v \| R_3 \| \lesssim \frac{ \varepsilon^{ \frac{ 1 }{ 2 } } }{ t^{ \frac{ n }{ 2 } - 2 } \tau } . \label{AAB}
\end{align}
Here, $ \| \cdot \| $ denotes a suitable matrix norm, for instance $ \| R_1 \| = \left( \sum_{ a , b } ( ( R_1 )^a_b )^2 \right)^{ 1 / 2 } $. Below, we will show that $ F $ satisfies the $ L^2 $-estimate \eqref{L2}.

\subsubsection{Decomposition of solution}
We note that $ F $ satisfies the equation \eqref{TphiF}, where the coefficients $ R_1 $ in the homogeneous part and $ R_2 $ in the inhomogeneous part have different behavior as in \eqref{AAB}. We decompose $ F $ into the homogeneous and inhomogeneous parts as follows:
\begin{align*}
F = F_1 + F_2 ,
\end{align*}
where $ F_1 $ is the solution of the following homogeneous equation:
\begin{align} \label{TphiH}
T_\phi F_1 + R_1 F_1 = 0 ,
\end{align}
with initial data $ F_1 ( 1 ) = F ( 1 ) $, and $ F_2 $ is the solution of the following inhomogeneous equation:
\begin{align} \label{TphiI}
T_\phi F_2 + R_1 F_2 = R_2 F_0 ,
\end{align}
with vanishing data $ F_2 ( 1 ) = 0 $.

\subsubsection{Homogeneous part}
We recall that $ F $ has the higher order derivatives of $ f $ of order up to $ N $. If we want to apply Proposition \ref{prop KSf} to $ F $, then we need to estimate the derivatives of $ F $ of order up to $ n $, which requires the derivatives of $ f $ of order up to $ N + n $. The following lemma shows that if we only consider the homogeneous part $ F_1 $, then we can simply assume that the derivatives of $ f $ of order up to $ N + n $ are small at $ \tau = 1 $ to estimate the derivatives of $ F_1 $ of order up to $ n $.

\begin{lemma} \label{lem homo}
The homogeneous part $ F_1 $ satisfies
\begin{align}
\hat{ E }_n ( F_1 ) ( \tau ) \lesssim \varepsilon \tau^\delta , \label{homo}
\end{align}
where $ \delta = 0 $ for $ n \geq 5 $, and $ \delta = \varepsilon^{ 1 / 4 } $ for $ n = 4 $.
\end{lemma}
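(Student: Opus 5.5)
We prove \eqref{homo} by commuting the homogeneous system \eqref{TphiH} with $\hat{Z}_D$ for $|D| \le n$ and running an energy estimate of the same type as in Lemma \ref{lem intchiZf1}. The key point is that no source term appears in \eqref{TphiH}. The initial data $F_1(1) = F(1)$ consist of $\hat{Z}_{A_i} f(1)$ with $|A_i| \le N$, so $\hat{Z}_D F_1(1)$ involves derivatives of $f$ of order at most $N + n$. Hence $\hat{E}_n(F_1)(1) \lesssim \hat{E}_{N+n}(f)(1) < \varepsilon$ by \eqref{smalldata}.

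\textbf{Step 1: commuted equation.} Applying $\hat{Z}_D$ to \eqref{TphiH} and using Lemma \ref{lem TphiZhat3} gives
\begin{align*}
T_\phi \hat{Z}_D F_1 = [T_\phi, \hat{Z}_D] F_1 - \sum_{D_1 + D_2 = D} \hat{Z}_{D_1}(R_1)\, \hat{Z}_{D_2} F_1 .
\end{align*}
The first term is a sum of $Q\, \partial_{x^\mu}(Z_B \phi)\, \hat{Z}_C F_1$ with $|B| \le n$ and $|C| \le n$. The entries of $R_1$ are of the form $Q^{\mu B C}_A \partial_{x^\mu}(Z_B \phi)$ with $|B| \le N - \lfloor N/2 \rfloor + n$, by the index bounds in Lemma \ref{lem R}(i). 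Derivatives $\hat{Z}_{D_1}$ of such entries are again sums of $\cQ$-coefficients times $\partial(Z_{B'} \phi)$, where $|B'| \le N - \lfloor N/2 \rfloor + 2n$; here we use that $\hat{Z}_a$ preserves $\cQ$, as in the proof of Lemma \ref{lem TphiZhat3}.

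\textbf{Step 2: pointwise control of the coefficients.} We need $\lfloor n/2 \rfloor + 1 + |B'| \le N$ in order to apply Proposition \ref{prop KSphi} together with \eqref{bootsphi}. This holds because $N - \lfloor N/2 \rfloor + 2n + \lfloor n/2 \rfloor + 1 \le N$ as soon as $\lfloor N/2 \rfloor \ge 2n + \lfloor n/2 \rfloor + 1$, and that is guaranteed by $N \ge 5n + 2$. This is exactly where the stronger hypothesis $N \ge 5n + 2$ (rather than $3n + 2$) is needed, and this index bookkeeping is the main thing to check. It yields
\begin{align*}
|\text{all coefficients}| \lesssim \varepsilon^{1/2} t^{2 - n/2} \tau^{-1} .
\end{align*}

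\textbf{Step 3: energy estimate and integration.} Apply Lemma \ref{lem intchif} componentwise to $\hat{Z}_D F_1$. We use $\int |\hat{Z}_C F_1| \, dv \le \hat{e}(|\hat{Z}_C F_1|)$ from Lemma \ref{lem chiintf}, and note that the term $|\nabla_x \phi|\,|\hat{Z}_D F_1|$ is absorbed since it carries the same decay. This gives
\begin{align*}
\hat{E}_n(F_1)(\tau) \le \hat{E}_n(F_1)(1) + C \varepsilon^{1/2} \int_1^\tau \lambda^{1 - n/2}\, \hat{E}_n(F_1)(\lambda)\, d\lambda .
\end{align*}
Here we used $t \ge \lambda$ and $t^{2 - n/2} \le \lambda^{2 - n/2}$ for $n \ge 4$.

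By Grönwall's inequality, $\hat{E}_n(F_1)(\tau) \lesssim \varepsilon \exp\!\big(C \varepsilon^{1/2} \int_1^\tau \lambda^{1 - n/2}\, d\lambda\big)$. For $n \ge 5$ the exponent is bounded, giving $\hat{E}_n(F_1)(\tau) \lesssim \varepsilon$. For $n = 4$ the exponent is $C \varepsilon^{1/2} \log \tau$, giving $\tau^{C \varepsilon^{1/2}} \le \tau^{\varepsilon^{1/4}} = \tau^\delta$ for $\varepsilon$ small. This is the claimed bound \eqref{homo}.
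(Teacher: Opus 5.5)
Your proposal is correct and follows the paper's proof essentially step by step. You use the same commuted equation \eqref{TphiZF}, the same index count $\lfloor n/2\rfloor+1+|D|+|B|\le N$ (which is where $N\ge 5n+2$ enters), the same initial bound $\hat{E}_n(F_1)(1)\lesssim\hat{E}_{N+n}(f)(1)$, and the same integral inequality \eqref{FGronwall}. The only difference is the last step: you apply the classical Gr\"onwall lemma to get $\tau^{C\varepsilon^{1/2}}\le\tau^{\varepsilon^{1/4}}$ directly, while the paper closes \eqref{FGronwall} by a continuity argument with the ansatz $2C_1\varepsilon\tau^\delta$ and $\delta=\varepsilon^{1/4}$; both are valid.
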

\begin{proof}
Applying $ \hat{ Z }_D $ to the equation \eqref{TphiH} we obtain
\begin{align}
T_\phi \hat{ Z }_D F_1 = - \hat{ Z }_D \left( R_1 F_1 \right) + \left[ T_\phi , \hat{ Z }_D \right] F_1 . \label{TphiZF}
\end{align}
Concerning the first term on the right hand side we notice that $ R_1 $ is given by
\begin{align*}
R_1 = \sum_{ | B | + | C | \leq | A | + 1 } \sum_{ 0 \leq \mu \leq n } Q^{ \mu B C }_{ A } \partial_{ x^\mu } ( Z_B \phi ) ,
\end{align*}
with multi-indices $ A $ and $ C $ satisfying
\begin{align*}
\left\lfloor \frac{ N }{ 2 } \right\rfloor - n +1 \leq | A | \leq N , \qquad \left\lfloor \frac{ N }{ 2 } \right\rfloor - n +1 \leq | C | \leq | A | .
\end{align*}
Then, we obtain for any $ | D | \leq n $,
\begin{align*}
| \partial_{ x^\mu } ( Z_{ D } Z_B \phi ) | & \lesssim \frac{ 1 }{ t^{ \frac{ n }{ 2 } - 1 } \tau } E_{ \lfloor n / 2 \rfloor + 1 }^{ \frac12 } ( Z_{ D } Z_B \phi ) \\
& \lesssim \frac{ \varepsilon^{ \frac{ 1 }{ 2 } } }{ t^{ \frac{ n }{ 2 } - 1 } \tau } ,
\end{align*}
by Proposition \ref{prop KSphi} and the bootstrap assumption \eqref{bootsphi}, since $ N \geq 5 n + 2 $ and
\begin{align*}
\left\lfloor \frac{ n }{ 2 } \right\rfloor + 1 + | D | + | B | & \leq \left\lfloor \frac{ n }{ 2 } \right\rfloor + 1 + n + N + 1 - \left\lfloor \frac{ N }{ 2 } \right\rfloor + n - 1 \\
& \leq \left\lfloor \frac{ 5 n + 2 }{ 2 } \right\rfloor + N - \left\lfloor \frac{ N }{ 2 } \right\rfloor \\
& \leq N .
\end{align*}
We recall that $ \hat{ Z }_{ D } Q^{ \mu B C }_{ A } \in \cQ $, so that $ | \hat{ Z }_{ D } Q^{ \mu B C }_{ A } | \lesssim t $, and note that $ Z_{ D } $ commutes with $ \partial_{ x^\mu } $ as in Lemma \ref{lem basic}. Now, we can conclude that for any $ | D | \leq n $,
\begin{align*}
| \hat{ Z }_D ( R_1 F_1 ) | & \lesssim \frac{ \varepsilon^{ \frac{ 1 }{ 2 } } }{ t^{ \frac{ n }{ 2 } - 2 } \tau } \sum_{ | D_1 | \leq n } | \hat{ Z }_{ D_1 } F_1 | .
\end{align*}
For the second term on the right hand side of \eqref{TphiZF} we use Lemma \ref{lem TphiZhat3} to have
\begin{align*}
\left[ T_\phi , \hat{ Z }_D \right] = \sum_{ \substack{ | B | + | C | \leq | D | + 1 \\ 1 \leq | C | \leq | D | } } \sum_{ 0 \leq \mu \leq n } Q^{ \mu B C }_{ D } \partial_{ x^\mu } ( Z_B \phi ) \hat{ Z }_C ,
\end{align*}
for some $ Q^{ \mu B C }_{ D } \in \cQ $. Since $ | D | \leq n $, we obtain by the same arguments as above
\begin{align*}
\left| \left[ T_\phi , \hat{ Z }_D \right] F_1 \right| & \lesssim \frac{ \varepsilon^{ \frac{ 1 }{ 2 } } }{ t^{ \frac{ n }{ 2 } - 2 } \tau } \sum_{ | D_1 | \leq n } | \hat{ Z }_{ D_1 } F_1 | .
\end{align*}
Now, we apply Lemma \ref{lem intchif} to obtain
\begin{align*}
& \int_{ H_\tau } \hat{ e } ( | \hat{ Z }_D F_1 | ) \, d \mu_{ H_\tau } - \int_{ H_1 } \hat{ e } ( | \hat{ Z }_D F_1 | ) \, d \mu_{ H_1 } \\
& \lesssim \int_1^\tau \int_{ H_\lambda } \int_\bbrn | \hat{ Z }_D ( R_1 F_1 ) | + \left| \left[ T_\phi , \hat{ Z }_D \right] F_1 \right| + | \nabla_x \phi | | \hat{ Z }_D F_1 | \, d v \, d \mu_{ H_\lambda } d \lambda \\
& \lesssim \int_1^\tau \int_{ H_\lambda } \frac{ \varepsilon^{ \frac{ 1 }{ 2 } } }{ t^{ \frac{ n }{ 2 } - 2 } \lambda } \sum_{ | D_1 | \leq n } \hat{ e } ( | \hat{ Z }_{ D_1 } F_1 | ) \, d \mu_{ H_\lambda } d \lambda \\
& \lesssim \varepsilon^{ \frac{ 1 }{ 2 } } \int_1^\tau \frac{ 1 }{ \lambda^{ \frac{ n }{ 2 } - 1 } } \hat{ E }_n ( F_1 ) \, d \lambda ,
\end{align*}
which implies that
\begin{align}
\hat{ E }_n ( F_1 ) ( \tau ) \leq \hat{ E }_n ( F_1 ) ( 1 ) + C \varepsilon^{ \frac{ 1 }{ 2 } } \int_1^\tau \frac{ 1 }{ \lambda^{ \frac{ n }{ 2 } - 1 } } \hat{ E }_n ( F_1 ) ( \lambda ) \, d \lambda . \label{FGronwall}
\end{align}
Since $ F_1 ( 1 ) = F ( 1 ) $ and $ \hat{ E }_n ( F ) ( 1 ) \lesssim \hat{ E }_{ N + n } ( f ) ( 1 ) < \varepsilon $, we can find $ C_{ 1 } > 0 $ and $ \tau_{ 1 } > 1 $ such that
\begin{align}
\hat{ E }_n ( F_1 ) ( 1 ) \leq C_{ 1 } \varepsilon , \qquad \hat{ E }_n ( F_1 ) ( \tau ) \leq
\left\{
\begin{aligned}
& 2 C_{ 1 } \varepsilon & ( n \geq 5 ) , \\
& 2 C_{ 1 } \varepsilon \tau^\delta & ( n = 4 ) ,
\end{aligned}
\right. \label{Flocal}
\end{align}
for $ 1 \leq \tau < \tau_{ 1 } $. Then, the inequality \eqref{FGronwall} implies that
\begin{align*}
\hat{ E }_n ( F_1 ) ( \tau ) \leq C_{ 1 } \varepsilon + C
\left\{
\begin{aligned}
& C_{ 1 } \varepsilon^{ \frac{ 3 }{ 2 } } & ( n \geq 5 ) , \\
& C_{ 1 } \varepsilon^{ \frac{ 3 }{ 2 } } \delta^{ - 1 } \tau^\delta & ( n = 4 ) ,
\end{aligned}
\right.
\end{align*}
for $ 1 \leq \tau < \tau_{ 1 } $. We take $ \delta = \varepsilon^{ 1 / 4 } $ to conclude that \eqref{Flocal} holds for all $ \tau \geq 1 $.
\end{proof}

\subsubsection{Inhomogeneous part}
To estimate the inhomogeneous part $ F_2 $ we introduce the matrix-valued function $ K $ satisfying
\begin{align}
T_\phi K + R_1 K + K R_3 = R_2 , \label{TphiK}
\end{align}
with vanishing data $ K ( 1 ) = 0 $. We notice that $ K $ is a $ q \times r $ matrix and $ F_2 $ is given by
\begin{align*}
F_2 = K F_0 .
\end{align*}
Below, we will need to estimate the following quantity:
\begin{align*}
\hat{ E }_K : = \int_{ H_\tau } \hat{ e } ( \| K \|^2 | F_0 | ) \, d \mu_{ H_\tau } ,
\end{align*}
which is the same as $ \hat{ E }_0 ( \| K \|^2 | F_0 | ) $ by the definition of \eqref{fnorm}, but we will use the notation $ \hat{ E }_K $ for simplicity. In the following lemma, we will estimate the quantity $ \hat{ E }_K $ by using the properties of $ R_1 $, $ R_2 $ and $ R_3 $ of \eqref{AAB} in a suitable way.

\begin{lemma} \label{lem inhomo}
The quantity $ \hat{ E }_K $ satisfies
\begin{align*}
\hat{ E }_K ( \tau ) \lesssim \varepsilon \tau^\delta ,
\end{align*}
where $ \delta = 0 $ for $ n \geq 5 $, and $ \delta = \varepsilon^{ 1 / 4 } $ for $ n = 4 $.
\end{lemma}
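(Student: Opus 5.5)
We follow the strategy of \cite{FJS17}. The first step is to derive a transport equation for the scalar quantity $ \| K \|^2 | F_0 | $, so that Lemma \ref{lem intchif} applies to it. From \eqref{TphiK} we compute
\begin{align*}
T_\phi \| K \|^2 = 2 \langle K , T_\phi K \rangle = - 2 \langle K , R_1 K \rangle - 2 \langle K , K R_3 \rangle + 2 \langle K , R_2 \rangle .
\end{align*}
From \eqref{TphiL} we have $ | T_\phi | F_0 | | \leq \| R_3 \| | F_0 | $ in the regular sense. Hence $ h := T_\phi ( \| K \|^2 | F_0 | ) $ satisfies
\begin{align*}
| h | \lesssim \left( \| R_1 \| + \| R_3 \| \right) \| K \|^2 | F_0 | + \| K \| \| R_2 \| | F_0 | .
\end{align*}
Since $ K ( 1 ) = 0 $, Lemma \ref{lem intchif} gives
\begin{align*}
\hat{ E }_K ( \tau ) \leq \int_1^\tau \int_{ H_\lambda } \int_\bbrn \left( | h | + | \nabla_x \phi | \| K \|^2 | F_0 | \right) d v \, d \mu_{ H_\lambda } d \lambda .
\end{align*}

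The good terms are those containing $ R_1 $, $ R_3 $ and $ \nabla_x \phi $. By \eqref{AAB} and Proposition \ref{prop KSphi}, these are bounded pointwise by $ \varepsilon^{1/2} t^{2-n/2} \lambda^{-1} \| K \|^2 | F_0 | $. Lemma \ref{lem chiintf} gives $ \int_\bbrn | g | \, d v \leq \hat{ e } ( | g | ) $, so after integrating over $ H_\lambda $ and using $ t \geq \lambda $, these terms contribute at most $ \varepsilon^{1/2} \lambda^{1-n/2} \hat{ E }_K ( \lambda ) $.

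The main obstacle is the cross term $ \| K \| \| R_2 \| | F_0 | $, because $ R_2 $ is controlled only in the weighted $ L^2 $ sense of \eqref{AAB}. We split it with Cauchy--Schwarz in $ v $ and then on $ H_\lambda $:
\begin{align*}
\int_{ H_\lambda } \| R_2 \| \int_\bbrn \| K \| | F_0 | \, d v \, d \mu
\leq \left( \int_{ H_\lambda } \frac{ \lambda }{ t^3 } \| R_2 \|^2 \, d\mu \right)^{\frac12} \left( \int_{ H_\lambda } \frac{ t^3 }{ \lambda } \int_\bbrn | F_0 | \, d v \int_\bbrn \| K \|^2 | F_0 | \, d v \, d \mu \right)^{\frac12} .
\end{align*}
The first factor is $ \lesssim \varepsilon^{1/2} $ by \eqref{AAB}. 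For the second factor we use the pointwise decay of $ F_0 $: all of its components satisfy $ | B_j | \leq \lfloor N / 2 \rfloor - n $, so \eqref{KSfimprove} together with the bootstrap assumption \eqref{bootsf} gives $ \int_\bbrn | F_0 | \, d v \lesssim \varepsilon \lambda^{\delta} t^{-n} $. The second factor is therefore bounded by
\begin{align*}
\left( \varepsilon \lambda^{\delta - 1} \sup_{ H_\lambda } t^{3-n} \, \hat{ E }_K \right)^{1/2} \lesssim \varepsilon^{1/2} \lambda^{ ( \delta + 2 - n ) / 2 } \hat{ E }_K^{1/2} ,
\end{align*}
using $ n \geq 3 $. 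The cross term thus contributes $ \varepsilon \lambda^{ 1 + ( \delta - n ) / 2 } \hat{ E }_K^{1/2} $ to the integrand in $ \lambda $.

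Collecting the bounds, we obtain
\begin{align*}
\hat{ E }_K ( \tau ) \lesssim \int_1^\tau \left( \varepsilon^{1/2} \lambda^{1-n/2} \hat{ E }_K + \varepsilon \lambda^{ 1 + ( \delta - n ) / 2 } \hat{ E }_K^{1/2} \right) d \lambda .
\end{align*}
We close this with a continuity argument as in Lemma \ref{lem homo}. Assume $ \hat{ E }_K \leq 2 C \varepsilon \lambda^\delta $ on a short interval and substitute. For $ n \geq 5 $ both exponents $ 1 - n/2 $ and $ 1 + ( \delta - n ) / 2 + \delta / 2 $ are integrable in $ \lambda $, which gives $ \hat{ E }_K \lesssim \varepsilon^{3/2} $. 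For $ n = 4 $ both integrals are $ \lesssim \delta^{-1} \tau^\delta $, and with $ \delta = \varepsilon^{1/4} $ this yields $ \hat{ E }_K \leq C \varepsilon^{5/4} \tau^\delta $. This improves the assumed bound and proves $ \hat{ E }_K ( \tau ) \lesssim \varepsilon \tau^\delta $. If the exponent bookkeeping at $ n = 4 $ turns out to be borderline, the fallback is to use $ \int_\bbrn v^0 | F_0 | \, d v $ and the $ v^0 $-weighted part of $ \hat{ E }_{ N , 1 } $, which gains extra decay.
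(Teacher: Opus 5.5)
Your proof is correct and follows essentially the same route as the paper. Both use a transport identity for the $K$-weighted copy of $F_0$, with pointwise bounds from \eqref{AAB} and Proposition \ref{prop KSphi} for the $R_1$, $R_3$ and $\nabla_x \phi$ terms. The cross term is handled by Cauchy--Schwarz against the weighted $L^2$ bound on $R_2$ together with pointwise decay of $\int_{\bbr^n} |F_0| \, dv$, and the argument closes by continuity with $\delta = \varepsilon^{1/4}$ when $n = 4$.

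The differences are cosmetic. The paper works componentwise with $(K^a_b)^2 F_0$, which avoids differentiating $|F_0|$, and uses \eqref{KSf} with $k=0$ and the unweighted energy. You instead use the scalar $\|K\|^2 |F_0|$ and \eqref{KSfimprove} with the $v^0$-weighted part of $\hat{E}_{N,1}(f)$. Both give the same rate $\varepsilon \lambda^{1 - n/2 + \delta/2} \hat{E}_K^{1/2}$ for the cross term, so your fallback remark for $n=4$ is unnecessary.
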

\begin{proof}
The proof of this lemma is basically the same as the proof of Lemma \ref{lem homo}. For each $ a $ and $ b $, we consider the evolution equation of $ ( K^a_b )^2 F_0 $:
\begin{align*}
T_\phi ( ( K^a_b )^2 F_0 ) = - 2 ( R_1 K )^a_b K^a_b F_0 - 2 ( K R_3 )^a_b K^a_b F_0 + 2 ( R_2 )^a_b K^a_b F_0 + ( K^a_b )^2 R_3 F_0 ,
\end{align*}
by the equations \eqref{TphiK} and \eqref{TphiL}. Here, the repeated indices are not summed over. In order to apply Lemma \ref{lem intchif}, we need to estimate the right hand side. For the terms having $ R_1 $ or $ R_3 $, we apply \eqref{AAB} to obtain
\begin{align*}
& \int_{ H_\tau } \int_\bbrn 2 | ( R_1 K )^a_b K^a_b F_0 | + 2 | ( K R_3 )^a_b K^a_b F_0 | + ( K^a_b )^2 | R_3 F_0 | \, d v \, d \mu_{ H_\tau } \\
& \lesssim \int_{ H_\tau } \frac{ \varepsilon^\frac{ 1 }{ 2 } }{ t^{ \frac{ n }{ 2 } - 2 } \tau } \int_\bbrn \| K \|^2 | F_0 | \, d v \, d \mu_{ H_\tau } \\
& \lesssim \int_{ H_\tau } \frac{ \varepsilon^\frac{ 1 }{ 2 } }{ t^{ \frac{ n }{ 2 } - 2 } \tau } \hat{ e } ( \| K \|^2 | F_0 | ) \, d \mu_{ H_\tau } \\
& \lesssim \frac{ \varepsilon^\frac{ 1 }{ 2 } }{ \tau^{ \frac{ n }{ 2 } - 1 } } \hat{ E }_K .
\end{align*}
For the terms having $ R_2 $, we use the Cauchy-Schwarz inequality to obtain
\begin{align*}
& \int_{ H_\tau } \int_\bbrn 2 | ( R_2 )^a_b K^a_b F_0 | \, d v \, d \mu_{ H_\tau } \\
& \lesssim \left( \int_{ H_\tau } \int_\bbrn ( ( R_2 )^a_b )^2 | F_0 | \, d v \, d \mu_{ H_\tau } \right)^{ \frac{ 1 }{ 2 } } \left( \int_{ H_\tau } \int_\bbrn ( K^a_b )^2 | F_0 | \, d v \, d \mu_{ H_\tau } \right)^{ \frac{ 1 }{ 2 } } .
\end{align*}
The first integration on the right hand side is estimated by using \eqref{AAB},
\begin{align*}
\left( \int_{ H_\tau } \int_\bbrn ( ( R_2 )^a_b )^2 | F_0 | \, d v \, d \mu_{ H_\tau } \right)^{ \frac{ 1 }{ 2 } }
& \lesssim \left( \int_{ H_\tau } \sup_v \| R_2 \|^2 \int_\bbrn | F_0 | \, d v \, d \mu_{ H_\tau } \right)^{ \frac{ 1 }{ 2 } } \\
& \lesssim \left( \int_{ H_\tau } \sup_v \| R_2 \|^2 \frac{ 1 }{ t^{ n - 1 } \tau } \hat{ E }_n ( F_0 ) \, d \mu_{ H_\tau } \right)^{ \frac{ 1 }{ 2 } } \\
& \lesssim \left( \int_{ H_\tau } \frac{ 1 }{ t^{ n - 1 } \tau } \sup_v \| R_2 \|^2 \, d \mu_{ H_\tau } \right)^{ \frac{ 1 }{ 2 } } \hat{ E }_n^{ \frac{ 1 }{ 2 } } ( F_0 ) \\
& \lesssim \frac{ \varepsilon^{ \frac{ 1 }{ 2 } } }{ \tau^{ \frac{ n }{ 2 } - 1 } } \hat{ E }_N^{ \frac{ 1 }{ 2 } } ( f ) ,
\end{align*}
and the second integration is simply estimated as follows:
\begin{align*}
\left( \int_{ H_\tau } \int_\bbrn ( K^a_b )^2 | F_0 | \, d v \, d \mu_{ H_\tau } \right)^{ \frac{ 1 }{ 2 } } & \lesssim \hat{ E }^{ \frac{ 1 }{ 2 } }_K .
\end{align*}
Now, we apply Lemma \ref{lem intchif} and consider all the possible indices $ a $ and $ b $ to obtain
\begin{align}
& \hat{ E }_K ( \tau ) \lesssim \int_1^\tau \frac{ \varepsilon^{ \frac{ 1 }{ 2 } } }{ \lambda^{ \frac{ n }{ 2 } - 1 } } \hat{ E }_K ( \lambda ) + \frac{ \varepsilon^{ \frac{ 1 }{ 2 } } }{ \lambda^{ \frac{ n }{ 2 } - 1 } } \hat{ E }_N^{ \frac{ 1 }{ 2 } } ( f ) ( \lambda ) \hat{ E }^{ \frac{ 1 }{ 2 } }_K ( \lambda ) \, d \lambda , \label{EGGronwall}
\end{align}
since $ K ( 1 ) = 0 $. We can find $ C_2 > 0 $ and $ \tau_2 > 1 $ satisfying
\begin{align}
\hat{ E }_K ( \tau ) \leq
\left\{
\begin{aligned}
& C_2 \varepsilon & ( n \geq 5 ) , \\
& C_2 \varepsilon \tau^\delta & ( n = 4 ) ,
\end{aligned}
\right. \label{EGlocal}
\end{align}
for $ 1 \leq \tau < \tau_2 $. Then, the inequality \eqref{EGGronwall} together with Lemma \ref{lem improvef} implies that
\begin{align*}
\hat{ E }_K ( \tau ) \lesssim
\left\{
\begin{aligned}
& \int_1^\tau \frac{ \varepsilon^{ \frac{ 3 }{ 2 } } }{ \lambda^{ \frac{ n }{ 2 } - 1 } } \, d \lambda \lesssim \varepsilon^{ \frac{ 3 }{ 2 } } & ( n \geq 5 ) , \\
& \int_1^\tau \frac{ \varepsilon^{ \frac{ 3 }{ 2 } } }{ \lambda^{ \frac{ n }{ 2 } - 1 - \delta } } \, d \lambda \lesssim \varepsilon^{ \frac{ 3 }{ 2 } } \delta^{ - 1 } \tau^\delta & ( n = 4 ) ,
\end{aligned}
\right.
\end{align*}
for $ 1 \leq \tau \leq \tau_2 $. We now take $ \delta = \varepsilon^{ 1 / 4 } $ to conclude that \eqref{EGlocal} holds for all $ \tau \geq 1 $.
\end{proof}

\subsubsection{Proof of the $ L^2 $-estimate}
We combine Lemmas \ref{lem homo} and \ref{lem inhomo} to prove the $ L^2 $-estimate \eqref{L2} for the distribution function. We note that Lemma \ref{lem homo} together with Proposition \ref{prop KSf} implies
\begin{align}
\int_\bbrn | F_1 | \, d v \lesssim \frac{ \varepsilon }{ t^{ n - 1 } \tau^{ 1 - \delta } } , \label{Fest}
\end{align}
where $ \delta = 0 $ for $ n \geq 5 $, and $ \delta = \varepsilon^{ 1 / 4 } $ for $ n = 4 $. Now, we obtain the $ L^2 $-estimate in the following proposition.

\begin{proposition}
For $ N - n + 1 \leq | A | \leq N $, the distribution function satisfies
\begin{align*}
\int_{ H_\tau } \frac{ t }{ \tau } \left( \int_\bbrn | \hat{ Z }_A f | \, d v \right)^2 d \mu_{ H_\tau } \lesssim \varepsilon^2 \tau^{ 2 \delta - n } ,
\end{align*}
where $ \delta = 0 $ for $ n \geq 5 $, and $ \delta = \varepsilon^{ 1 / 4 } $ for $ n = 4 $.
\end{proposition}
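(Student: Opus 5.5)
Since $ N - n + 1 \geq \lfloor N / 2 \rfloor - n + 1 $, each $ \hat{ Z }_A f $ with $ N - n + 1 \leq | A | \leq N $ is a component of $ F $. We use the decomposition $ F = F_1 + F_2 = F_1 + K F_0 $, so pointwise $ | \hat{ Z }_A f | \leq | F_1 | + \| K \| | F_0 | $. It therefore suffices to bound
\begin{align*}
I_1 := \int_{ H_\tau } \frac{ t }{ \tau } \left( \int_\bbrn | F_1 | \, d v \right)^2 d \mu_{ H_\tau } , \qquad I_2 := \int_{ H_\tau } \frac{ t }{ \tau } \left( \int_\bbrn \| K \| | F_0 | \, d v \right)^2 d \mu_{ H_\tau }
\end{align*}
by $ \varepsilon^2 \tau^{ 2 \delta - n } $.

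For $ I_1 $ we use the pointwise bound \eqref{Fest}, which comes from Lemma \ref{lem homo} and Proposition \ref{prop KSf}. Together with \eqref{dmuH_tau}, this gives
\begin{align*}
I_1 \lesssim \varepsilon^2 \tau^{ 2 \delta - 2 } \int_0^\infty \frac{ t }{ \tau } \frac{ \tau }{ t } \frac{ r^{ n - 1 } }{ t^{ 2 n - 2 } } \, d r \lesssim \frac{ \varepsilon^2 \tau^{ 2 \delta - 2 } }{ \tau^{ n - 2 } } ,
\end{align*}
using $ t \geq \max ( r , \tau ) $. This is exactly the computation already done in the proof of Lemma \ref{lem improvephi}.

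For $ I_2 $ we apply Cauchy--Schwarz in $ v $ with weight $ | F_0 | $:
\begin{align*}
\left( \int_\bbrn \| K \| | F_0 | \, d v \right)^2 \leq \left( \int_\bbrn \| K \|^2 | F_0 | \, d v \right) \left( \int_\bbrn | F_0 | \, d v \right) .
\end{align*}
The entries of $ F_0 $ are $ \hat{ Z }_B f $ with $ | B | \leq \lfloor N / 2 \rfloor - n $. By \eqref{KSfimprove} and the bound \eqref{improvef} on the weighted lower-order part of $ \hat{ E }_{ N , 1 } ( f ) $, we have the improved pointwise decay $ \int | F_0 | \, d v \lesssim \varepsilon \tau^\delta t^{ - n } $. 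Hence
\begin{align*}
I_2 \lesssim \varepsilon \tau^\delta \int_{ H_\tau } \frac{ t }{ \tau } \frac{ 1 }{ t^n } \int_\bbrn \| K \|^2 | F_0 | \, d v \, d \mu_{ H_\tau } \lesssim \frac{ \varepsilon \tau^\delta }{ \tau^n } \int_{ H_\tau } \hat{ e } ( \| K \|^2 | F_0 | ) \, d \mu_{ H_\tau } .
\end{align*}
The second step uses $ t / ( \tau t^n ) \leq \tau^{ - n } $, since $ t \geq \tau $ and $ n \geq 1 $, together with the third inequality of Lemma \ref{lem chiintf}. The last integral is $ \hat{ E }_K \lesssim \varepsilon \tau^\delta $ by Lemma \ref{lem inhomo}, so $ I_2 \lesssim \varepsilon^2 \tau^{ 2 \delta - n } $.

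The main point needing care is the choice of the Cauchy--Schwarz splitting in $ I_2 $. A cruder approach that puts $ \sup \| K \| $ pointwise is unavailable, since $ K $ is controlled only through $ \hat{ E }_K $. The fix is to route the decay entirely through the weighted pointwise estimate \eqref{KSfimprove} on $ F_0 $, which gives the full $ t^{ - n } $ rather than $ t^{ - n + 1 } \tau^{ - 1 } $. This is what absorbs the $ t / \tau $ weight. We must also check that the indices of $ F_0 $ plus $ n $ stay within the $ v^0 $-weighted range $ \lfloor N / 2 \rfloor $ of $ \hat{ E }_{ N , 1 } $. This holds by the definition of $ F_0 $, since $ \lfloor N / 2 \rfloor - n + n = \lfloor N / 2 \rfloor $.
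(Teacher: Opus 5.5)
Your proof is correct and follows the paper's argument. It uses the same splitting $F = F_1 + K F_0$, the same bound \eqref{Fest} for $F_1$, and the same Cauchy--Schwarz in $v$ with weight $|F_0|$ closed by Lemma \ref{lem inhomo}.

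The only difference is how you bound $\int|F_0|\,dv$. The paper uses the unweighted estimate \eqref{KSf} with $k=0$ together with $\hat E_N(f)\le\hat E_{N,1}(f)$. This already suffices because $\frac{t}{\tau}\cdot\frac{1}{t^{n-1}\tau}\le\tau^{-n}$ for $t \geq \tau$ and $n \geq 2$. So your closing claim that the $v^0$-weighted estimate \eqref{KSfimprove} is what absorbs the $t/\tau$ weight is not accurate, although your variant is also valid.
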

\begin{proof}
We notice that it is enough to show that
\begin{align*}
\int_{ H_\tau } \frac{ t }{ \tau } \left( \int_\bbrn | F | \, d v \right)^2 d \mu_{ H_\tau } \lesssim \varepsilon^2 \tau^{ 2 \delta - n } ,
\end{align*}
and we consider $ F = F_1 + F_2 $ separately. The homogeneous part $ F_1 $ is estimated by using \eqref{Fest} with \eqref{dmuH_tau} as follows:
\begin{align*}
\int_{ H_\tau } \frac{ t }{ \tau } \left( \int_\bbrn | F_1 | \, d v \right)^2 d \mu_{ H_\tau } & \lesssim \int_{ H_\tau } \frac{ t }{ \tau } \left( \frac{ \varepsilon }{ t^{ n - 1 } \tau^{ 1 - \delta } } \right)^2 d \mu_{ H_\tau } \\
& \lesssim \frac{ \varepsilon^2 }{ \tau^{ 2 - 2 \delta } } \int_0^\infty \frac{ r^{ n - 1 } }{ t^{ 2 n - 2 } } \, d r \\
& \lesssim \frac{ \varepsilon^2 }{ \tau^{ 2 - 2 \delta } t^{ n - 2 } } \\
& \lesssim \frac{ \varepsilon^2 }{ \tau^{ n - 2 \delta } } .
\end{align*}
Similarly, the inhomogeneous part $ F_2 = K F_0 $ is estimated by using Lemma \ref{lem inhomo} as follows:
\begin{align*}
\int_{ H_\tau } \frac{ t }{ \tau } \left( \int_\bbrn | F_2 | \, d v \right)^2 d \mu_{ H_\tau } & \lesssim \int_{ H_\tau } \frac{ t }{ \tau } \left( \int_\bbrn \| K \| | F_0 | \, d v \right)^2 d \mu_{ H_\tau } \\
& \lesssim \int_{ H_\tau } \frac{ t }{ \tau } \left( \int_\bbrn \| K \|^2 | F_0 | \, d v \right) \left( \int_\bbrn | F_0 | \, d v \right) d \mu_{ H_\tau } \\
& \lesssim \int_{ H_\tau } \frac{ t }{ \tau } \hat{ e } ( \| K \|^2 | F_0 | ) \frac{ 1 }{ t^{ n - 1 } \tau } \hat{ E }_n ( F_0 ) \, d \mu_{ H_\tau } \\
& \lesssim \frac{ 1 }{ \tau^n } \hat{ E }_K \hat{ E }_N ( f ) \\
& \lesssim \frac{ \varepsilon^2 }{ \tau^{ n - 2 \delta } } ,
\end{align*}
which completes the proof of the proposition.
\end{proof}

\section*{Acknowledgments}
This work was supported by the National Research Foundation of Korea (NRF) grant funded by the Korea government (MSIT) (No.\ RS-2024-00451692). The author also acknowledges financial support under the project PID2024-155175NB-I00 (Agencia Estatal de Investigaci{\'o}n and Ministerio de Ciencia, Innovaci{\'o}n y Universidades). Part of this work was carried out while the author was visiting the Laboratoire Jacques--Louis Lions. The author thanks Jacques Smulevici for his hospitality and for helpful discussions.

\end{document}